\newtheorem{theorem}{Theorem}[section]
\newtheorem{lemma}[theorem]{Lemma}
\newtheorem{corollary}[theorem]{Corollary}
\newtheorem{question}[theorem]{Question}
\theoremstyle{definition}
\newtheorem{definition}[theorem]{Definition}
\newtheorem{proposition}[theorem]{Proposition}
\newtheorem{example}[theorem]{Example}
\newtheorem{problem}[theorem]{Problem}
\begin{document}

\title[Some generalized metric properties on hyperspaces with the Vietoris topology]
{Some generalized metric properties on hyperspaces with the Vietoris topology}

 \author{Fucai Lin}
  \address{(Fucai Lin): School of mathematics and statistics,
  Minnan Normal University, Zhangzhou 363000, P. R. China}
  \email{linfucai2008@aliyun.com; linfucai@mnnu.edu.cn}

\author{Rongxin Shen}
\address{(Rongxin Shen): Department of Mathematics, Taizhou University, Taizhou 225300, P. R. China} \email{srx20212021@163.com}

\author{Chuan Liu}
\address{(Chuan Liu): Department of Mathematics,
Ohio University Zanesville Campus, Zanesville, OH 43701, USA}
\email{liuc1@ohio.edu}

  \thanks{The first author is supported by the NSFC (No. 11571158), the Natural Science Foundation of Fujian Province (No. 2017J01405) of China, the Program for New Century Excellent Talents in Fujian Province University, the Institute of Meteorological Big Data-Digital Fujian and Fujian Key Laboratory of Date Science and Statistics.}

\keywords{hyperspace; generalized metric space; $k$-network; point-countable base; point-regular base; $\aleph$-space; c-semi-stratifiable space; g-first-countable space.}
\subjclass[2010]{54B20; 54D20}

\begin{abstract}
We study the heredity of the classes of generalized metric spaces (for example, spaces with a $\sigma$-hereditarily closure-preserving $k$-network, spaces with a point-countable base, spaces with a base of countable order, spaces with a point-regular base, Nagata-spaces, $c$-semi-stratifiable spaces, $\gamma$-spaces, semi-metrizable spaces) to the hyperspaces of nonempty
compact subsets and finite subsets with the Vietoris topology.
\end{abstract}

\maketitle
\section{Introduction}
It is well known that the topics of the hyperspace has been the focus of much research, see \cite{GM2016, K1978, M1951, NP2015, N1985, PS2017, TLL2018}. There are many results on the hyperspace $CL(X)$ of closed subsets of a topological space equipped with various topologies and various of its subsets such as $\mathcal{F}(X)$, the space of finite subsets of $X$, and $\mathcal{K}(X)$, the space of compact subsets of $X$. In this paper, we endow
$\mathcal{K}(X)$ with the Vietoris topology, or the so-called finite topology, the base of which consists
of all subsets of the following form:
$$\langle U_1, ..., U_k\rangle =\{K\in \mathcal{K}(X): K\subset
{\bigcup}_{i=1}^k U_i\ \mbox{and}\ K\cap U_j\neq \emptyset, 1\leq j\leq
k\},$$where each $U_i$ is open in $X$ and $k\in\mathbb{N}$. From the relationship of topological properties between $X$ and $\mathcal{F}(X)$ and $\mathcal{K}(X)$, we naturally consider the following problem:

\begin{problem}
Let $\mathcal{C}$ be a class of spaces with some generalized metric property. If $X\in\mathcal{C}$, does then $\mathcal{K}(X)$ or $\mathcal{F}(X)$ belong to $\mathcal{C}$?
\end{problem}

The paper is organized as follows. In Section 2, we
introduce the necessary notation and terminology which are used for
the rest of the paper. In Section 3, we mainly discuss the generalized metric properties on $\mathcal{K}(X)$, such as, $\aleph$-spaces, spaces with a point-countable base and $\alpha$-spaces, etc. In Section 4, we mainly discuss the generalized metric properties on $\mathcal{F}(X)$, such as, spaces with a BCO, spaces with a sharp base, Nagata spaces, spaces with a point-regular base, $c$-semi-stratifiable spaces, $\gamma$-spaces, $g$-first-countable spaces, etc.

\maketitle
\section{Preliminaries}
As it is known already from \cite{M1951}, most of separation axioms of a space $X$, including
regularity and $T_{2}$-ness, are inherited to $\mathcal{K}(X)$. Therefore, throughout this paper, all topological spaces are assumed to be
at least regular $T_{2}$-space, unless explicitly stated otherwise. Let $\mathbb{N}$ and $\omega$ denote the sets of all positive
  integers and all non-negative integers, respectively. For undefined
  notation and terminologies, the reader may refer to
  \cite{E1989} and \cite{G1984}.

  Let $X$ be a topological space and $A \subseteq X$ be a subset of $X$.
  The \emph{closure} of $A$ in $X$ is denoted by $\overline{A}$. A subset $P$ of $X$ is called a
  \emph{sequential neighborhood} of $x \in X$, if each
  sequence converging to $x$ is eventually in $P$. A subset $U$ of
  $X$ is called \emph{sequentially open} if $U$ is a sequential neighborhood of
  each of its points. A subset $F$ of
  $X$ is called \emph{sequentially closed} if $X\setminus F$ is sequentially open. The space $X$ is called a \emph{sequential space} if each
  sequentially open subset of $X$ is open. The space $X$ is said to be {\it Fr\'{e}chet-Urysohn} if, for
each $x\in \overline{A}\subset X$, there exists a sequence
$\{x_{n}\}$ in $A$ such that $\{x_{n}\}$ converges to $x$.

\begin{definition}
Let $\mathscr P$ be a cover of a space $X$ such that (i) $\mathscr P =\bigcup_{x\in X}\mathscr{P}_{x}$; (ii) for each point $x\in X$, if $U,V\in\mathscr{P}_{x}$, then $W\subseteq U\cap V$ for some $W\in \mathscr{P}_{x}$;
and (iii) for each point $x\in X$ and each open neighborhood $U$ of $x$ there is some $P\in\mathscr P_x$ such that $x\in P \subseteq U$.

\smallskip
$\bullet$ The family $\mathscr P$ is called an \emph{sn-network} for $X$ if for each point $x\in X$, each element of $\mathscr P_x$ is a sequential neighborhood of $x$ in $X$, and $X$ is called \emph{snf-countable} if $X$ has an $sn$-network $\mathscr P$ and $\mathscr P_x$ is countable for all $x\in X$.

\smallskip
$\bullet$ The family $\mathscr{P}$ is called a {\it weak base} for $X$ if, for every $G\subset X$, the set $G$ must be open in $X$ whenever for each $x\in G$ there exists $P\in
\mathscr{P}_{x}$ such that $P\subset G$, and $X$ is {\it g-first-countable} if $X$ has a weak base $\mathscr P$ and $\mathscr{P}_{x}$ is countable for each $x\in X$.
\end{definition}

The following
  implications follow directly from definitions:
  \[
  \mbox{first countable space} \Rightarrow \mbox{g-first-countable space}\Rightarrow
\mbox{$snf$-countable space, and}\footnote{ Note that none of these implications can be reversed.}
  \]
  \[
  \mbox{sequential space + $snf$-countable space} \Leftrightarrow \mbox{g-first-countable space}.
  \]

  \begin{definition}
  Let $\lambda$ be a cardinal. An indexed family $\{X_{\alpha}\}_{\alpha\in\lambda}$ of subsets of a space $X$ is called

\smallskip
 $\bullet$ {\it point-countable} if for any point $x\in X$ the set $\{\alpha\in\lambda: x\in X_{\alpha}\}$ is countable;

\smallskip
 $\bullet$ {\it compact-countable} if for any compact subset $K$ in $X$ the set $\{\alpha\in\lambda: K\cap X_{\alpha}\neq\emptyset\}$ is countable;

\smallskip
  $\bullet$ {\it locally countable} if any point $x\in X$ has a neighborhood $O_{x}\subset X$ such that the set $\{\alpha\in\lambda: O_{x}\cap X_{\alpha}\neq\emptyset\}$ is countable;

\smallskip
  $\bullet$ {\it locally finite} if any point $x\in X$ has a neighborhood $O_{x}\subset X$ such that the set $\{\alpha\in\lambda: O_{x}\cap X_{\alpha}\neq\emptyset\}$ is finite;

\smallskip
  $\bullet$ {\it compact-finite} in $X$ if for each compact subset $K\subset X$ the set $\{\alpha\in\lambda: K\cap X_{\alpha}\neq\emptyset\}$ is finite.
  \end{definition}

\begin{definition}
Let $\mathscr P$ be a family of subsets of a space $X$.

\smallskip
$\bullet$ The family $\mathscr{P}$ is
{\it hereditarily closure-preserving} (abbrev. HCP) if, whenever a
subset $S(P)\subset P$ is chosen for each $P\in\mathscr{P}$, the
family $\{S(P): P\in \mathscr{P}\}$ is closure-preserving.

\smallskip
$\bullet$ The family $\mathscr P$ is called a {\it $k$-network}
if for every compact subset $K$ of $X$ and an arbitrary open set
$U$ containing $K$ in $X$ there is a finite subfamily $\mathscr {P}^{\prime}\subseteq\mathscr {P}$ such that $K\subseteq \bigcup\mathscr {P}^{\prime}\subseteq U$.

\smallskip
$\bullet$ The space $X$ is an \emph{$\aleph$-space} if
$X$ has a $\sigma$-locally finite $k$-network.

\smallskip
$\bullet$ The space $X$ is said to be
\emph{La\v{s}nev} if it is the continuous closed image of some metric space.
\end{definition}

The following La\v{s}nev spaces in Definition~\ref{d00} play an important role in the study of the generalized metric theory. Moreover, if $\kappa$ be a uncountable cardinal, then it is not an $\aleph$-space.

\begin{definition}\label{d00}
Let $\kappa$ be an infinite cardinal.
  For each $\alpha\in\kappa$, let $T_{\alpha}$ be a sequence converging to
  $x_{\alpha}\not\in T_{\alpha}$. Let $T=\bigoplus_{\alpha\in\kappa}(T_{\alpha}\cup\{x_{\alpha}\})$ be the topological sum of $\{T_{\alpha}
  \cup \{x_{\alpha}\}: \alpha\in\kappa\}$. Then
  $S_{\kappa}=\{x\}  \cup \bigcup_{\alpha\in\kappa}T_{\alpha}$
  is the quotient space obtained from $T$ by
  identifying all the points $x_{\alpha}\in T$ to the point $x$. The space $S_{\kappa}$ is called a {\it sequential fan}.
\end{definition}

The following space in Definition~\ref{d01} is an $\aleph$-space which is not a La\v{s}nev space.

\begin{definition}\label{d01}
A space $X$ is called an \emph{ $S_{2}$}-{space} ({\it Arens' space})  if
$$X=\{\infty\}\cup \{x_{n}: n\in \mathbb{N}\}\cup\{x_{n, m}: m, n\in
\omega\}$$ and the topology is defined as follows: Each
$x_{n, m}$ is isolated; a basic neighborhood of $x_{n}$ is
$\{x_{n}\}\cup\{x_{n, m}: m>k\}$, where $k\in\omega$;
a basic neighborhood of $\infty$ is $$\{\infty\}\cup (\bigcup\{V_{n}:
n>k\})\ \mbox{for some}\ k\in \omega,$$ where $V_{n}$ is a
neighborhood of $x_{n}$ for each $n\in\omega$.
\end{definition}

Given a space $X$, we define its {\it hyperspace} as the
following sets.
$$\mathcal{K}(X)=\{K\subset X, K\ \mbox{is compact}\},$$
$$\mathcal{F}_n(X)=\{A\subset X, |A|\leq n\}, \mbox{where}\ n\in\mathbb{N}, \mbox{and}$$
$$\mathcal{F}(X)=\{A\subset X, A\ \mbox{is finite}\}.$$
We endow $\mathcal{K}(X)$ with the {\it Vietoris topology}. Obviously, from the definitions above, we have that $\mathcal{F}(X),
\mathcal{F}_n(X)$ are subspaces of $\mathcal{K}(X)$ and
$\mathcal{F}(X)={\bigcup_{n=1}^{\infty}}\mathcal{F}_n(X)$.

\smallskip
\section{Generalized metric properties on $\mathcal{K}(X)$}
In this section, we mainly discuss the generalized metric properties on $\mathcal{K}(X)$, and study the relations of the topological property $\mathcal{P}$ in $X$ and $\mathcal{K}(X)$. First, we recall two lemmas in \cite{M1951}.

Let $\mathcal{P}$ be a family of subsets of a space $X$. For any
$r\in\mathbb{N}$ and $P_1, ..., P_r\in \mathcal{P}$, denote $\langle
P_1, ..., P_r\rangle$ by the set $$\{K\in \mathcal{K}(X): K\subset
{\bigcup_{i=1}^r}P_i\ \mbox{and}\ K\cap P_j\neq \emptyset, 1\leq
j\leq r\}.$$ Put $$\mathcal{B}=\{\langle P_1, ..., P_r\rangle:
P_i\in\mathcal{P}, 1\leq i\leq r, r\in \mathbb{N}\}.$$

\begin{lemma}\label{l1}
\cite[Lemma 2.3.1]{M1951} Let $U_1, ..., U_n$ and $V_1,..., V_m$ be subsets of a space $X$. Then $\langle U_1, ..., U_n\rangle\subset
\langle V_1,..., V_m\rangle$ if and only if
$\bigcup_{j=1}^nU_j\subset \bigcup_{j=1}^mV_j$ and for every $V_i$
there exists a $U_k$ such that $U_k\subset V_i$.
\end{lemma}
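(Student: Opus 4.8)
The plan is to prove both implications directly, exploiting the fact (implicit in the excerpt) that finite subsets of $X$ are compact, so that $\mathcal{F}(X)\subseteq\mathcal{K}(X)$ furnishes enough concrete ``test elements'' of $\langle U_1,\dots,U_n\rangle$. I would work under the standing convention that each $U_i$ and each $V_i$ is nonempty, since otherwise $\langle U_1,\dots,U_n\rangle$ is empty and the statement must be read with the evident adjustment.

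For the sufficiency, I would assume $\bigcup_{j=1}^n U_j\subseteq\bigcup_{i=1}^m V_i$ together with a choice $i\mapsto k(i)$ such that $U_{k(i)}\subseteq V_i$, and then check membership: given $K\in\langle U_1,\dots,U_n\rangle$, the inclusion $K\subseteq\bigcup_j U_j\subseteq\bigcup_i V_i$ is immediate, and for each $i$ the nonempty intersection $K\cap U_{k(i)}$ lies inside $V_i$, so $K\cap V_i\ne\emptyset$; hence $K\in\langle V_1,\dots,V_m\rangle$.

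For the necessity, assuming $\langle U_1,\dots,U_n\rangle\subseteq\langle V_1,\dots,V_m\rangle$, I would argue in two steps. To obtain $\bigcup_j U_j\subseteq\bigcup_i V_i$, fix $x\in U_{j_0}$, choose $x_j\in U_j$ for $j\ne j_0$, set $x_{j_0}=x$, and feed the finite set $K=\{x_1,\dots,x_n\}$ — which clearly lies in $\langle U_1,\dots,U_n\rangle$ — into the hypothesis to conclude $x\in K\subseteq\bigcup_i V_i$. To obtain the refinement condition, I would argue by contradiction: if some $V_{i_0}$ contained no $U_k$, I would pick $y_k\in U_k\setminus V_{i_0}$ for each $k\le n$, note that $K=\{y_1,\dots,y_n\}\in\langle U_1,\dots,U_n\rangle$, hence $K\in\langle V_1,\dots,V_m\rangle$, whence $K\cap V_{i_0}\ne\emptyset$, contradicting $y_k\notin V_{i_0}$ for all $k$; therefore some $U_k\subseteq V_{i_0}$.

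The argument is elementary and I do not expect a genuine obstacle; the only points requiring a little care are the degenerate case of empty members among the $U_i$ or $V_i$ (handled by the nonemptiness convention) and the verification that the finite test sets constructed above genuinely meet every $U_j$, which is arranged by their construction. The conceptual content is simply the recognition that finite subsets are the right test compacta for comparing basic Vietoris neighborhoods.
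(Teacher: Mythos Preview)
Your argument is correct and is essentially the standard one. Note, however, that the paper does not supply its own proof of this lemma: it simply quotes it from Michael \cite[Lemma~2.3.1]{M1951}, so there is nothing in the present paper to compare your proposal against. Your use of finite test sets is exactly how Michael's original argument proceeds, and your remark about the nonemptiness convention for the $U_i$ is apt---without it the ``only if'' direction can fail (e.g.\ $U_1=\emptyset$, $U_2=X$, $V_1$ a singleton).
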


\begin{lemma}\label{l2}
\cite[Lemma 2.3.2]{M1951} Let $U_1, ..., U_n$ be subsets of a space $X$. Then $$Cl(\langle U_1, ..., U_n\rangle)=\langle
\overline{U}_1, ..., \overline{U}_n\rangle.$$
\end{lemma}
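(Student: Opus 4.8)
The plan is to prove the two inclusions separately, working directly from the definition of the Vietoris topology on $\mathcal{K}(X)$ and basic closure arguments.

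For $Cl(\langle U_1,\ldots,U_n\rangle)\subseteq\langle\overline{U}_1,\ldots,\overline{U}_n\rangle$, I would first note that $\langle U_1,\ldots,U_n\rangle\subseteq\langle\overline{U}_1,\ldots,\overline{U}_n\rangle$ (immediate from $U_i\subseteq\overline{U}_i$, or via Lemma~\ref{l1}), so it suffices to check that $\langle\overline{U}_1,\ldots,\overline{U}_n\rangle$ is closed in $\mathcal{K}(X)$. Given $K\in\mathcal{K}(X)$ with $K\notin\langle\overline{U}_1,\ldots,\overline{U}_n\rangle$, one of two things happens. If $K\not\subseteq\bigcup_{i=1}^n\overline{U}_i$, then $W:=X\setminus\bigcup_{i=1}^n\overline{U}_i$ is open (a finite union of closed sets is closed) and meets $K$, so $\langle X,W\rangle$ is a basic neighborhood of $K$; any member $L$ of it meets $W$, hence $L\not\subseteq\bigcup_i\overline{U}_i\supseteq\bigcup_i U_i$, so $\langle X,W\rangle$ misses $\langle U_1,\ldots,U_n\rangle$. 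If instead $K\cap\overline{U}_j=\emptyset$ for some $j$, then $K\subseteq X\setminus\overline{U}_j$, so $\langle X\setminus\overline{U}_j\rangle$ is a basic neighborhood of $K$ all of whose members are disjoint from $\overline{U}_j\supseteq U_j$, hence again miss $\langle U_1,\ldots,U_n\rangle$. Either way $K\notin Cl(\langle U_1,\ldots,U_n\rangle)$.

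For the reverse inclusion, I would fix $K\in\langle\overline{U}_1,\ldots,\overline{U}_n\rangle$ and an arbitrary basic neighborhood $\langle V_1,\ldots,V_m\rangle$ of $K$, and manufacture a finite (hence compact) set $L\in\langle U_1,\ldots,U_n\rangle\cap\langle V_1,\ldots,V_m\rangle$. The recipe: for each $l\le m$ pick $x_l\in K\cap V_l$; since $K\subseteq\bigcup_i\overline{U}_i$, the point $x_l$ lies in some $\overline{U}_{i(l)}$, and as $V_l$ is open the set $U_{i(l)}\cap V_l$ is then nonempty, so choose $y_l\in U_{i(l)}\cap V_l$. Symmetrically, for each $i\le n$ use $\emptyset\neq K\cap\overline{U}_i$ together with $K\subseteq\bigcup_l V_l$ to locate a point of $\overline{U}_i$ inside some $V_{l(i)}$, and then, since $V_{l(i)}$ is open, a point $w_i\in U_i\cap V_{l(i)}$. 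Taking $L=\{y_l:1\le l\le m\}\cup\{w_i:1\le i\le n\}$: the $w_i$'s guarantee $L$ meets every $U_i$, the $y_l$'s guarantee $L$ meets every $V_l$, and every chosen point lies in $\bigcup_i U_i$ and in $\bigcup_l V_l$, so $L$ belongs to both bracket-sets. Since $\langle V_1,\ldots,V_m\rangle$ was an arbitrary basic neighborhood of $K$, this gives $K\in Cl(\langle U_1,\ldots,U_n\rangle)$.

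The argument is essentially routine; the only step that needs a moment's thought is assembling $L$ in the second part, where one must include enough points to hit every $U_i$ and every $V_l$ simultaneously while keeping all of them inside $\bigcup_i U_i\cap\bigcup_l V_l$. The recurring device that makes this work is the elementary observation that an open set meeting $\overline{U}_i$ already meets $U_i$. Note that neither half of the proof uses the standing regularity or Hausdorffness of $X$.
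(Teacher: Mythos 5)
Your proof is correct. Note, though, that the paper does not prove this lemma at all: it is imported verbatim as Michael's Lemma 2.3.2 from \cite{M1951}, so there is no in-paper argument to compare against; what you have written is a self-contained substitute for that citation. Two features of your argument are worth keeping in mind. First, since the closure is taken in $\mathcal{K}(X)$, the nontrivial inclusion $\langle \overline{U}_1,\ldots,\overline{U}_n\rangle\subseteq Cl(\langle U_1,\ldots,U_n\rangle)$ requires producing a \emph{compact} member of $\langle U_1,\ldots,U_n\rangle$ inside each basic neighborhood of $K$, and your device of assembling a finite set from points of $U_{i(l)}\cap V_l$ and $U_i\cap V_{l(i)}$ is exactly what makes the density argument work inside $\mathcal{K}(X)$ rather than in the larger hyperspace of closed sets where Michael's lemma is usually stated. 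Second, your proof only uses openness of the $V_l$ coming from the Vietoris base, never openness of the $U_i$; this matters because the paper applies the lemma to arbitrary (indeed closed) sets, e.g.\ in the last assertion of Proposition~\ref{l4}, so your argument covers precisely the generality in which the lemma is invoked, and, as you observe, without any use of the standing separation hypotheses.
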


The following Lemma~\ref{l3} maybe was proved somewhere.

\begin{lemma}\label{l3}
Let $\mathcal{P}$ be a $k$-network of space $X$. Then $\mathcal{B}$
is a $k$-network of $\mathcal{K}(X)$.
\end{lemma}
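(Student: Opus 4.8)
The plan is to show that $\mathcal{B}$ satisfies the $k$-network condition for $\mathcal{K}(X)$: given a compact set $\mathcal{K}\subseteq\mathcal{K}(X)$ and an open set $\mathcal{U}$ in $\mathcal{K}(X)$ with $\mathcal{K}\subseteq\mathcal{U}$, I need a finite subfamily $\mathcal{B}'\subseteq\mathcal{B}$ with $\mathcal{K}\subseteq\bigcup\mathcal{B}'\subseteq\mathcal{U}$. The first step is to reduce to the case where $\mathcal{U}$ is a basic open set. Since $\mathcal{K}$ is compact, cover it by finitely many basic Vietoris open sets $\langle U_1^{(i)},\dots,U_{n_i}^{(i)}\rangle$, $i\le t$, each contained in $\mathcal{U}$; by splitting $\mathcal{K}$ into the (compact) pieces $\mathcal{K}\cap\langle U_1^{(i)},\dots,U_{n_i}^{(i)}\rangle$ it suffices to find, for a single basic open set $\langle U_1,\dots,U_n\rangle$ and a compact $\mathcal{K}_0\subseteq\langle U_1,\dots,U_n\rangle$, a finite $\mathcal{B}'\subseteq\mathcal{B}$ with $\mathcal{K}_0\subseteq\bigcup\mathcal{B}'\subseteq\langle U_1,\dots,U_n\rangle$. (A minor point: the compact pieces need not be closed unless $\langle U_1,\dots,U_n\rangle$ is open, which it is, so this is fine; alternatively one handles the finite union at the end.)

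Next I would analyze what it means for $\mathcal{K}_0$ to lie inside $\langle U_1,\dots,U_n\rangle$ and pass this down to $X$. Let $L=\overline{\bigcup\{K:K\in\mathcal{K}_0\}}$. A standard fact is that $\bigcup\mathcal{K}_0$ is compact when $\mathcal{K}_0$ is compact in $\mathcal{K}(X)$ (indeed $\bigcup\mathcal{K}_0\subseteq\bigcup_{i=1}^n U_i$, and one checks compactness via the Vietoris topology), so $L$ is compact and $L\subseteq\bigcup_{i=1}^n U_i$. Because $\mathcal{P}$ is a $k$-network for $X$, for this compact $L$ inside the open set $\bigcup_{i=1}^n U_i$ there is a finite $\mathcal{P}'\subseteq\mathcal{P}$ with $L\subseteq\bigcup\mathcal{P}'\subseteq\bigcup_{i=1}^n U_i$. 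I can refine $\mathcal{P}'$: for each $P\in\mathcal{P}'$ and each $x\in P$, $x$ lies in some $U_{j(x)}$; but rather than refine pointwise I would instead, for each nonempty subset $J\subseteq\{1,\dots,n\}$ and each $P\in\mathcal{P}'$ with $P\subseteq\bigcup_{j\in J}U_j$, keep track of which $U_j$'s $P$ "contributes" to. The cleaner route: apply the $k$-network property of $\mathcal{P}$ separately to each compact set $L\cap\overline{U_1},\dots$ — but these needn't be inside a single $U_j$. So instead I take the finite family $\mathcal{P}'$ covering $L$, and for each $P\in\mathcal{P}'$ record the set $J_P=\{j\le n: P\cap U_j\neq\emptyset \text{ and } P\subseteq U_j\}$ after first shrinking: replace each $P\in\mathcal{P}'$ by $P\cap U_j$ only when... this is getting delicate, and indeed this bookkeeping is the main obstacle.

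The key combinatorial step, which I expect to be the crux, is the following. Having a finite family $\mathcal{P}'=\{P_1,\dots,P_m\}$ with $L\subseteq\bigcup P_k\subseteq\bigcup_{i=1}^n U_i$, I want to produce finitely many tuples from $\mathcal{P}$ whose associated $\langle\cdots\rangle$-sets union to something between $\mathcal{K}_0$ and $\langle U_1,\dots,U_n\rangle$. For each function $f$ assigning to each $k\le m$ an index $f(k)\le n$ with $P_k\subseteq U_{f(k)}$ — note such $f(k)$ need not exist for a given $k$ if $P_k$ meets several $U_i$ without lying in one, so I must first refine $\mathcal{P}'$ by intersecting: since $\mathcal{P}$ may not be closed under intersection with the $U_i$, instead I reapply the $k$-network property of $\mathcal{P}$ to each compact set $L\cap C$ where $C$ ranges over the closed sets $\overline{U_i}\setminus\bigcup_{i'\neq i}U_{i'}$... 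Actually the honest plan: refine so that every $P\in\mathcal{P}'$ used is contained in some single $U_i$. This can be arranged because $L$ is covered by the open sets $U_1,\dots,U_n$, so $L=\bigcup_i (L\cap U_i)$ with each $L\cap U_i$ having compact closure inside... no. The correct move is: for each $i$, $L\setminus\bigcup_{j\neq i}U_j$ is a compact subset of $U_i$ (it is closed in $L$ and contained in $U_i$), so by the $k$-network property there is finite $\mathcal{P}_i'\subseteq\mathcal{P}$ with $L\setminus\bigcup_{j\neq i}U_j\subseteq\bigcup\mathcal{P}_i'\subseteq U_i$. Then... this still doesn't cover points of $L$ in overlaps. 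One fixes this by an induction on $n$, or by covering $L$ by finitely many members of $\mathcal{P}$ each contained in some $U_i$, which is possible since $\{P\in\mathcal{P}:P\subseteq U_i\text{ for some }i,\ x\in P\subseteq U_i\}$ — using that $\mathcal{P}$ together with axiom (iii)-type density is not assumed for a bare $k$-network, so one genuinely needs: the family $\{P\cap U_i: P\in\mathcal{P},\ i\le n\}$ is still a $k$-network, but it need not be a subfamily of $\mathcal{P}$; however $\mathcal{B}$ is built from $\mathcal{P}$ only. The resolution is to not insist $\bigcup\mathcal{B}'\subseteq\langle U_1,\dots,U_n\rangle$ via sets contained in single $U_i$'s, but to build the $\langle\cdots\rangle$-tuples directly: for each subset $\emptyset\neq S\subseteq\mathcal{P}'$ that "covers" $\mathcal{K}_0$-sets hitting each required $U_j$, form an appropriate tuple and argue via Lemma~\ref{l1}. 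I would carry this out by: (a) fixing a finite $\mathcal{P}'$ with $L\subseteq\bigcup\mathcal{P}'\subseteq\bigcup U_i$; (b) for each $K\in\mathcal{K}_0$, since $K$ is compact and $K\subseteq\bigcup\mathcal{P}'$, choosing a minimal subcover $\mathcal{P}_K\subseteq\mathcal{P}'$; (c) noting there are only finitely many possible $\mathcal{P}_K$; (d) for each such $\mathcal{S}\subseteq\mathcal{P}'$ that arises, checking $K\in\langle\mathcal{S}\rangle$ (the tuple listing the members of $\mathcal{S}$) and that $\langle\mathcal{S}\rangle\subseteq\langle U_1,\dots,U_n\rangle$ — the latter holds by Lemma~\ref{l1} provided $\bigcup\mathcal{S}\subseteq\bigcup U_i$ (true) and each $U_i$ meeting some $K\in\mathcal{K}_0$ contains some member of $\mathcal{S}$; to force the last condition I further refine $\mathcal{P}'$ so that each of its members lies in some $U_i$ — and THIS final refinement, as discussed, is the real obstacle, handled by reapplying the $k$-network definition of $\mathcal{P}$ to the finitely many compact sets $\overline{P}\cap L$ ($P\in\mathcal{P}'$) each shrunk into the relevant $U_i$, iterating finitely often. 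I will present the argument along these lines, with the refinement-into-single-$U_i$ lemma proved first as the technical heart.
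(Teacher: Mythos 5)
The main gap is in your final step (d), and it is not repaired by the refinement you lean on. Suppose you have arranged (as you intend) that every member of the finite family $\mathcal{P}'$ lies in some single $U_i$, and for $K\in\mathcal{K}_0$ you take a minimal subcover $\mathcal{P}_K\subseteq\mathcal{P}'$. By Lemma~\ref{l1}, $\langle\mathcal{P}_K\rangle\subseteq\langle U_1,\dots,U_n\rangle$ requires that \emph{for each} $i$ some member of the tuple be contained in $U_i$; and since the tuple must also consist of sets meeting $K$ (otherwise $K\notin\langle\mathcal{P}_K\rangle$), what you really need is, for each $i$, a member of $\mathcal{P}'$ that is contained in $U_i$ \emph{and} meets $K$. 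Neither minimality nor your refinement gives this: points of $K\cap U_i$ lying in overlaps may be covered only by members sitting inside other $U_j$'s, and after the shrinking there may be no member of $\mathcal{P}'$ at all that is inside $U_i$ and meets $K$. Concretely, with $U_1=(0,2)$, $U_2=(1,3)$ in $\mathbb{R}$, $K=\{3/2\}$ and $P=(5/4,5/2)\subseteq U_2$ covering $K$, one has $K\in\langle P\rangle$ but $\langle P\rangle\not\subseteq\langle U_1,U_2\rangle$ (the singleton $\{11/5\}$ lies in $\langle P\rangle$ and misses $U_1$). Moreover, the way you propose to obtain even the refinement itself --- reapplying the $k$-network property to the compact sets $\overline{P}\cap L$ ``shrunk into the relevant $U_i$, iterating finitely often'' --- is not well defined: such a set need not lie in any single $U_i$, and re-covering it by members of $\mathcal{P}$ inside $\bigcup_i U_i$ makes no progress. (That sub-step is in fact easy by a standard shrinking: write the compact set $L$ as $\bigcup_i L_i$ with each $L_i$ compact and $L_i\subseteq U_i$, then apply the $k$-network property to each pair $L_i\subseteq U_i$; but, as above, this alone does not close the real gap. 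Also, your opening reduction is off: $\mathcal{K}\cap\langle U_1,\dots,U_n\rangle$ is relatively open in $\mathcal{K}$, hence generally not compact; one needs a closed shrinking of the finite cover of $\mathcal{K}$.)

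What is missing is a uniform supply of $U_t$-witnesses meeting every $K$ in a whole piece of $\mathcal{K}_0$, and this is exactly what the paper's proof manufactures before ever invoking the $k$-network property of $\mathcal{P}$. Using regularity of $\mathcal{K}(X)$, it covers the compact set by basic open sets $\widehat{V}(i)$ with $Cl(\widehat{V}(i))=\langle\overline{V_1(i)},\dots,\overline{V_{s_i}(i)}\rangle\subseteq\langle U_1,\dots,U_k\rangle$; Lemma~\ref{l1} then yields, for each $t$, an index $i(t)$ with $\overline{V_{i(t)}(i)}\subseteq U_t$, and \emph{every} $K$ in $\mathbf{K}\cap Cl(\widehat{V}(i))$ meets $\overline{V_{i(t)}(i)}$. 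Applying the $k$-network property of $\mathcal{P}$ to the compact sets $K(i)\cap\overline{V_j(i)}$ inside $\bigcap\{U_t: i(t)=j\}$ (and inside $\bigcup_t U_t$ for the remaining $j$) produces finitely many members of $\mathcal{P}$ that are inside the right $U_t$'s and meet every such $K$; finitely many tuples (unions of nonempty subfamilies) then cover the piece and stay inside $\langle U_1,\dots,U_k\rangle$. Your argument needs an analogous device; as written, the proposal does not prove the lemma.
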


\begin{proof}
Let $\mathbf{K}\subset \widehat{U}$ with $\mathbf{K}$ compact and
$\widehat{U}$ open in $\mathcal{K}(X)$. We divide the proof into the
following two cases.

\smallskip
{\bf Case 1.} $\widehat{U}$ is a basic open subset of $\mathcal{K}(X)$.

\smallskip
We write $\widehat{U}=\langle U_1, ..., U_k\rangle$, where each $U_i
(i\leq k)$ is open in $X$. Since $\mathbf{K}$ is compact in
$\mathcal{K}(X)$ and $\mathcal{K}(X)$ is regular, there exist
finitely many basic open subsets $\{\widehat{V}(i): i\leq n\}$ such that
$$\mathbf{K}\subset \bigcup_{i\leq n}\widehat{V}(i)\subset
\bigcup_{i\leq n}Cl(\widehat{V}(i))\subset \widehat{U},$$ where each
$\widehat{V}(i)=\langle V_1(i), ..., V_{s_i}(i)\rangle$ and each $V_j(i)
(j\leq s_i, i\leq n)$ is open in $X$.

Fix an arbitrary $i\leq n$. Since $$Cl(\widehat{V}(i))=\langle
\overline{V_1(i)}, ..., \overline{V_{s_i}(i)}\rangle\subset \widehat{U}$$
by Lemma~\ref{l2}, it follows from Lemma~\ref{l1} that for each $t\leq k$ we can take
$i(t)\leq s_i$ such that $\overline{V_{i(t)}(i)}\subset U_t$. Let
$$A=\{i(t): t\leq k\}, B=\{1, \ldots, s_i\}\setminus A.$$ By
\cite[Theorem 0.2]{N1985}, we see that $$K(i)=\bigcup\{K: K\in
\mathbf{K}\cap Cl(\widehat{V}(i))\}$$ is compact in $X$, hence it is
easy that $$K(i)\in Cl(\widehat{V}(i))=\langle \overline{V_1(i)}, ...,
\overline{V_{s_i}(i)}\rangle\subset \langle U_1, ..., U_k\rangle.$$
For each $j\in A$, since $\mathcal{P}$ is a $k$-network of
$X$, we can choose a finite subfamily $\mathcal{P}_j\subset
\mathcal{P}$ such that $(K(i)\cap\overline{V_j(i)})\cap P\neq
\emptyset$ for each $P\in \mathcal{P}_j$ and $$K(i)\cap
\overline{V_j(i)}\subset \bigcup\mathcal{P}_j\subset
\bigcap\{U_t: t\leq k, i(t)=j\}.$$ For each $j\in B$, choose a finite subfamily
$\mathcal{P}_j\subset \mathcal{P}$ such that
$(K(i)\cap\overline{V_j(i)})\cap P\neq \emptyset$ for $P\in
\mathcal{P}_j$ and $K(i)\cap \overline{V_j(i)}\subset
\bigcup\mathcal{P}_j\subset \bigcup_{t=1}^{k}U_t$. Let
$$\mathscr{W}_{i}=\{\bigcup_{j=1}^{s_i}\mathcal{P}_j': \ \mbox{for
any nonempty subfamily}\ \mathcal{P}_j'\subset \mathcal{P}_j, j\leq
s_i\}.$$ Obviously, $\mathscr{W}_{i}$ is a finite set consisting of
elements of some finite subfamilies of $\mathcal{P}$. Then
$$\mathbf{K}\cap Cl(\widehat{V}(i))\subset \bigcup\{\langle
\mathcal{W}\rangle: \mathcal{W} \in \mathscr{W}_{i}\}\subset
\widehat{U},$$ where $\langle \mathcal{W}\rangle=\langle P_1,...,
P_m\rangle$ for $\mathcal{W}=\{P_1,..., P_m\}\in\mathscr{W}_{i}$.

Now let $\mathscr{W}=\bigcup_{i=1}^{n}\mathscr{W}_{i}$ and let
$\mathscr{O}=\{<\mathcal{W}>: \mathcal{W}\in\mathscr{W}\}$.
Obviously, $\mathscr{O}$ is a finite subset of $\mathcal{B}$. Of
course, we have $\mathbf{K}\subset
\bigcup\mathscr{O}\subset\widehat{U}$.

\smallskip
{\bf Case 2}: $\widehat{U}$ is an open subset of $\mathcal{K}(X)$

\smallskip
By the compactness of $\mathbf{K}$, there exists a
finite open cover $\{\widehat{U}(i): i=1, 2, ..., p\}$ of $\mathbf{K}$
consisting of basic open subsets of $\mathcal{K}(X)$ such that
$\mathbf{K}\subset \bigcup_{i=1}^{p}\widehat{U}(i)\subset \widehat{U}$. Since
$\mathbf{K}$ is compact, there exists a closed cover
$\{\mathbf{K}(i): i=1, 2, ...,p\}$ of $\mathbf{K}$ such that
$\mathbf{K}(i)\subset \widehat{U}(i)$ for each $i\leq p$. By virtue of
Case 1, for each $i\leq p$ we can find a finite subset
$\mathscr{O}_{i}$ of $\mathcal{B}$ such that $\mathbf{K}(i)\subset
\bigcup\mathscr{O}_{i}\subset \widehat{U}(i)$, hence $\mathbf{K}\subset
\bigcup_{i=1}^p\mathscr{O}_{i}\subset \widehat{U}$.
\end{proof}

\begin{proposition}\label{l4}
Let $\mathcal{P}$ be a compact-countable (resp., locally countable,
locally finite, compact-finite) family of subsets of $X$. Then $\mathcal{B}$ is
compact-countable (resp., locally countable, locally finite, compact-finite) in
$\mathcal{K}(X)$. Moreover, if $\mathcal{P}$ is a family of closed subsets, then $\mathcal{B}$ is a family of closed subsets in $\mathcal{K}(X)$.
\end{proposition}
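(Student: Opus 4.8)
The plan is to reduce the whole statement to two elementary observations. First, for any $P_1,\dots,P_r\in\mathcal{P}$ the set $\langle P_1,\dots,P_r\rangle$ depends only on the nonempty finite set $\{P_1,\dots,P_r\}$ (neither order nor repetitions matter, directly from the definition); so I shall regard $\mathcal{B}$ as indexed by the nonempty finite subsets $S$ of $\mathcal{P}$, writing $\langle S\rangle$ for the corresponding member. Second, for every compact $\mathbf{K}\subseteq\mathcal{K}(X)$ the union $\bigcup\mathbf{K}=\bigcup\{K:K\in\mathbf{K}\}$ is a compact subset of $X$ --- this is \cite[Theorem 0.2]{N1985}, exactly as used in the proof of Lemma~\ref{l3}. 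The ``moreover'' clause then drops out of Lemma~\ref{l2}: if every member of $\mathcal{P}$ is closed, then $Cl(\langle P_1,\dots,P_r\rangle)=\langle\overline{P_1},\dots,\overline{P_r}\rangle=\langle P_1,\dots,P_r\rangle$, so every element of $\mathcal{B}$ is closed in $\mathcal{K}(X)$.

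Next I would treat the compact-countable and compact-finite cases together. Fix a compact $\mathbf{K}\subseteq\mathcal{K}(X)$ and set $L=\bigcup\mathbf{K}$, which is compact in $X$. If $\langle S\rangle\cap\mathbf{K}\neq\emptyset$, choose $K\in\mathbf{K}$ with $K\in\langle S\rangle$; then $K\cap P\neq\emptyset$ for every $P\in S$, whence $P\cap L\neq\emptyset$ for every $P\in S$, i.e.\ $S\subseteq\mathcal{P}_L:=\{P\in\mathcal{P}:P\cap L\neq\emptyset\}$. Therefore
$$\{B\in\mathcal{B}:B\cap\mathbf{K}\neq\emptyset\}\subseteq\bigl\{\langle S\rangle:\emptyset\neq S\subseteq\mathcal{P}_L,\ S\text{ finite}\bigr\}.$$
When $\mathcal{P}$ is compact-countable, $\mathcal{P}_L$ is countable, hence has countably many finite subsets and the displayed family is countable; when $\mathcal{P}$ is compact-finite, $\mathcal{P}_L$ is finite, hence has finitely many finite subsets and the displayed family is finite.

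The locally countable and locally finite cases are essentially the same, localized at a point $\mathbf{K}_0\in\mathcal{K}(X)$ (a compact subset of $X$). For each $x\in\mathbf{K}_0$ I pick an open $O_x\ni x$ with $\{P\in\mathcal{P}:O_x\cap P\neq\emptyset\}$ countable (resp.\ finite); by compactness cover $\mathbf{K}_0$ by $O_{x_1},\dots,O_{x_k}$ and put $W=\bigcup_{i\le k}O_{x_i}$, so that $\mathcal{P}_W:=\{P\in\mathcal{P}:W\cap P\neq\emptyset\}=\bigcup_{i\le k}\{P\in\mathcal{P}:O_{x_i}\cap P\neq\emptyset\}$ is countable (resp.\ finite). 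Then $\langle W\rangle=\{K\in\mathcal{K}(X):K\subseteq W\}$ is an open neighborhood of $\mathbf{K}_0$ in $\mathcal{K}(X)$, and if $\langle S\rangle\cap\langle W\rangle\neq\emptyset$ then some compact $K\subseteq W$ meets every $P\in S$, so $S\subseteq\mathcal{P}_W$; thus $\{B\in\mathcal{B}:B\cap\langle W\rangle\neq\emptyset\}$ is contained in the family of all $\langle S\rangle$ with $\emptyset\neq S\subseteq\mathcal{P}_W$ finite, which is countable (resp.\ finite).

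I do not expect a genuine obstacle here; the one point needing care is the two ``finite'' cases, where it is essential that $\mathcal{B}$ is controlled by the finite \emph{subsets} of $\mathcal{P}$ rather than by finite \emph{sequences} (the set of finite sequences drawn from a single nonempty family is already infinite). Once the reduction $\langle P_1,\dots,P_r\rangle=\langle\{P_1,\dots,P_r\}\rangle$ is in place, all four counting arguments are immediate.
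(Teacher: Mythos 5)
Your proof is correct and takes essentially the same route as the paper: pass to the compact set $\bigcup\mathbf{K}$ (resp.\ to a finite union of local neighborhoods covering the point $\mathbf{K}_0$), note that any $\langle P_1,\dots,P_r\rangle$ meeting it forces every $P_i$ to meet that set, and get the ``moreover'' clause from Lemma~\ref{l2}. Your explicit remark that $\langle P_1,\dots,P_r\rangle$ depends only on the finite \emph{subset} $\{P_1,\dots,P_r\}$ of $\mathcal{P}$ is exactly the detail needed to make the two ``finite'' cases work, which the paper leaves to the reader.
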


\begin{proof}
We consider the following four cases respectively.

\smallskip
1) The family $\mathcal{P}$ is compact-countable.

\smallskip
Take any compact subset $\mathbf{K}$ of $\mathcal{K}(X)$. Put
$K=\bigcup\mathbf{K}$. Then $K$ is compact in $X$ by \cite[Theorem
2.5.2]{M1951}. Put $\mathcal{P}'=\{P\in \mathcal{P}: P\cap K\neq
\emptyset\}$. Then $\mathcal{P}'$ is countable since $\mathcal{P}$
is compact-countable in $X$. Take any $\langle P_1,..,
P_s\rangle\in\mathcal{B}$. Assume that $\mathbf{K}\cap \langle
P_1,.., P_s\rangle\neq \emptyset$, then we can choose an arbitrary $H\in \mathbf{K}\cap
\langle P_1,.., P_s\rangle$. Then for each $i\leq s$ we have
$P_i\cap H\neq\emptyset$,  hence $P_i\cap K\neq \emptyset$ since
$H\subset K$, which shows that $P_i\in \mathcal{P}'$ for each $i\leq
p$. Therefore, $\mathcal{B}$ is compact-countable.

\smallskip
2) The family $\mathcal{P}$ is locally countable.

\smallskip
Take any point $K$ of $\mathcal{K}(X)$. We need to find a
neighborhood $\widehat{U}$ of $K$ in $\mathcal{K}(X)$ such that
$\widehat{U}$ intersects at most countably many elements of
$\mathcal{B}$. For each $x\in K$, there is an open neighborhood
$U_x$ of $x$ such that $U_x$ intersects at most countably many elements of $\mathcal{P}$. Then $\{U_x: x\in K\}$ is a cover of $K$, hence there is a finite subset $\{x_{i}: i\leq n\}$
of $K$ such that $K\subset \{U_{x_i}: i\leq n\}$. Let $U=\bigcup_{i=1}^{n}U_{x_i}$. Clearly,
$|\{P\in \mathcal{P}: P\cap U\neq \emptyset\}|\leq \omega$. Let
$\widehat{U}=\langle U_{x_1}, .., U_{x_n}\rangle$. Take any $\langle
P_1,.., P_s\rangle\in\mathcal{B}$. If $\widehat{U}\cap \langle P_1,...,
P_s\rangle\neq \emptyset$, then we can pick any $H\in \langle U_{x_1}, ...,
U_{x_n}\rangle\cap \langle P_1,..., P_s\rangle$, hence $H\subset U,
H\cap U_{x_{i}}\neq \emptyset, H\cap P_j\neq \emptyset$ for each
$i\leq n, j\leq s$. Thus $U\cap P_j\neq \emptyset$ for each $j\leq
s$, which shows that each $P_j\in \{P\in\mathcal{P}: P\cap U\neq \emptyset\}$.
Therefore, $\mathcal{B}$ is locally countable.

\smallskip
3) The family $\mathcal{P}$ is locally finite.
\smallskip

The proof is similar to 2), so we left it to the reader.

\smallskip
4) The family $\mathcal{P}$ is compact-finite.
\smallskip

The proof is similar to 1), so we left it to the reader.

By Lemma~\ref{l2}, it is obvious that $\mathcal{B}$ is a family of closed subsets in $\mathcal{K}(X)$ if $\mathcal{P}$ is a family of closed subsets.
\end{proof}

From Lemma~\ref{l3} and Proposition~\ref{l4}, we have the following two corollaries.

\begin{corollary}
A space is an $\aleph_{k}$-space\footnote{Recall that a regular space $X$ is $\aleph_{k}$ ($\overline{\aleph}_{k}$) \cite{B2016} if $X$ has a compact-countable (closed) $k$-network.} ($\overline{\aleph}_{k}$-space) if and only if $\mathcal{K}(X)$ is an $\aleph_{k}$-space ($\overline{\aleph}_{k}$-space).
\end{corollary}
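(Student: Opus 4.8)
The plan is to prove the equivalence in two directions, exploiting the fact that $X$ embeds as a closed subspace of $\mathcal{K}(X)$ via $x\mapsto\{x\}$. For the ``if'' direction, suppose $\mathcal{K}(X)$ is an $\aleph_k$-space (resp.\ $\overline{\aleph}_k$-space). The map $i\colon X\to\mathcal{K}(X)$ sending $x$ to $\{x\}$ is a homeomorphism onto its image $\mathcal{F}_1(X)$, and $\mathcal{F}_1(X)=\langle X\rangle^c$ is closed in $\mathcal{K}(X)$ (indeed $\mathcal{F}_1(X)$ is closed since its complement is a union of basic open sets $\langle U_1,U_2\rangle$ with $U_1\cap U_2=\emptyset$). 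The key point here is that both the $\aleph_k$ property and the $\overline{\aleph}_k$ property are inherited by closed subspaces: given a compact-countable (closed) $k$-network $\mathcal{B}$ for $\mathcal{K}(X)$, the trace family $\{B\cap\mathcal{F}_1(X):B\in\mathcal{B}\}$ is a compact-countable (closed, using closedness of $\mathcal{F}_1(X)$) $k$-network for $\mathcal{F}_1(X)\cong X$. So this direction reduces to a short, standard hereditariness argument.

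For the ``only if'' direction, suppose $X$ is an $\aleph_k$-space, so $X$ has a $k$-network $\mathcal{P}=\bigcup_{n}\mathcal{P}_n$ where the whole family $\mathcal{P}$ is compact-countable (in the $\overline{\aleph}_k$ case we also take each member of $\mathcal{P}$ closed). Form $\mathcal{B}$ as in the paragraph preceding Lemma~\ref{l1}. By Lemma~\ref{l3}, $\mathcal{B}$ is a $k$-network for $\mathcal{K}(X)$, and by Proposition~\ref{l4}, $\mathcal{B}$ is compact-countable in $\mathcal{K}(X)$ (and a family of closed subsets when $\mathcal{P}$ consists of closed subsets, again by Proposition~\ref{l4}). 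Hence $\mathcal{B}$ is a compact-countable (closed) $k$-network for $\mathcal{K}(X)$, witnessing that $\mathcal{K}(X)$ is an $\aleph_k$-space ($\overline{\aleph}_k$-space). This direction is essentially immediate from the two results the excerpt has already established.

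One small technical point to watch: the definition of $\aleph_k$-space requires a compact-countable $k$-network, and ``compact-countable'' is a condition on the family as a whole, not on a $\sigma$-decomposition; Proposition~\ref{l4} is stated for a compact-countable family $\mathcal{P}$ and concludes $\mathcal{B}$ is compact-countable, so one should just apply it directly to the given $k$-network without decomposing it into pieces. I expect the only genuine (though still minor) obstacle to be the ``if'' direction: one must verify that $\mathcal{F}_1(X)$ is closed in $\mathcal{K}(X)$ and that the embedding $x\mapsto\{x\}$ is a homeomorphism onto $\mathcal{F}_1(X)$, and then check that tracing a (closed) compact-countable $k$-network onto a closed subspace preserves all three properties — the closedness of the traces relying on the closedness of $\mathcal{F}_1(X)$, and compact-countability being automatic since compact subsets of the subspace are compact in $\mathcal{K}(X)$. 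None of this is deep, so the corollary follows cleanly by combining Lemma~\ref{l3}, Proposition~\ref{l4}, and closed-hereditariness.
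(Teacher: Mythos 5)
Your argument is exactly the paper's (implicit) proof: the forward direction is obtained by applying Lemma~\ref{l3} and Proposition~\ref{l4} to a compact-countable (closed) $k$-network of $X$, and the converse is the standard hereditariness of these properties to the copy of $X$ inside $\mathcal{K}(X)$, which is all the paper invokes when it derives the corollary from those two results. The only nitpick is your parenthetical description of the complement of $\mathcal{F}_1(X)$: a compact set with two points need not lie in $\langle U_1,U_2\rangle$ for disjoint $U_1,U_2$, so one should use sets of the form $\langle X,U_1,U_2\rangle$ instead --- and in fact closedness of $\mathcal{F}_1(X)$ is not even needed, since the $k$-network, compact-countability and relative closedness of traces are hereditary to arbitrary subspaces.
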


\begin{corollary}\cite{K1978}\label{cc0}
A space is an $\aleph$-space if and only if $\mathcal{K}(X)$ is an $\aleph$-space.
\end{corollary}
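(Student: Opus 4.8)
The plan is to derive both implications from Lemma~\ref{l3} and Proposition~\ref{l4}, using the fact that $x\mapsto\{x\}$ is a homeomorphism of $X$ onto the subspace $\mathcal{F}_{1}(X)\subseteq\mathcal{K}(X)$.

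For the direction ``$\mathcal{K}(X)$ an $\aleph$-space $\Rightarrow$ $X$ an $\aleph$-space'' I would simply invoke that the class of $\aleph$-spaces is hereditary: if $\mathcal{P}$ is a $\sigma$-locally finite $k$-network of a space $Z$ and $Y\subseteq Z$, then $\{P\cap Y:P\in\mathcal{P}\}$ is a $\sigma$-locally finite $k$-network of $Y$ — the trace of a $k$-network is again a $k$-network since a compact subset of $Y$ is compact in $Z$ and a relatively open set of $Y$ is the trace of an open set of $Z$, while local finiteness is obviously inherited by subspaces. Combined with $X\cong\mathcal{F}_{1}(X)\subseteq\mathcal{K}(X)$, this settles the easy half.

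For the converse I would start from a $k$-network $\mathcal{P}=\bigcup_{n\in\mathbb{N}}\mathcal{P}_{n}$ of $X$ with each $\mathcal{P}_{n}$ locally finite, and form $\mathcal{B}$ as in the paragraph preceding Lemma~\ref{l3}. By Lemma~\ref{l3}, $\mathcal{B}$ is a $k$-network of $\mathcal{K}(X)$, so it suffices to exhibit $\mathcal{B}$ as a countable union of locally finite families. For $r\in\mathbb{N}$ and $(n_{1},\dots,n_{r})\in\mathbb{N}^{r}$ put $\mathcal{B}_{(n_{1},\dots,n_{r})}=\{\langle P_{1},\dots,P_{r}\rangle:P_{i}\in\mathcal{P}_{n_{i}},\ 1\leq i\leq r\}$; there are only countably many such indices, and $\mathcal{B}=\bigcup_{r,(n_{1},\dots,n_{r})}\mathcal{B}_{(n_{1},\dots,n_{r})}$ because every member of $\mathcal{B}$ is assembled from finitely many sets, each of which lies in some $\mathcal{P}_{n_{i}}$. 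Fixing $(n_{1},\dots,n_{r})$, the family $\bigcup_{i=1}^{r}\mathcal{P}_{n_{i}}$ is a finite union of locally finite families, hence locally finite, so by Proposition~\ref{l4} the family ``$\mathcal{B}$ built from $\bigcup_{i=1}^{r}\mathcal{P}_{n_{i}}$'' is locally finite in $\mathcal{K}(X)$, and $\mathcal{B}_{(n_{1},\dots,n_{r})}$ is a subfamily of it, hence locally finite. Thus $\mathcal{B}$ is a $\sigma$-locally finite $k$-network of $\mathcal{K}(X)$, and $\mathcal{K}(X)$ is an $\aleph$-space.

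The one spot deserving attention — the ``hard part'', such as it is — is the bookkeeping in the converse: Proposition~\ref{l4} is phrased for the $\mathcal{B}$ generated by a single family, so the argument must explicitly regroup the $r$-tuples of members of $\mathcal{P}$ according to which $\mathcal{P}_{n_{i}}$ each coordinate comes from and use that a finite union of locally finite families is locally finite. Beyond that there is no genuine difficulty; the rest is a direct appeal to Lemma~\ref{l3}, Proposition~\ref{l4}, and the heredity of the $\aleph$-space property to subspaces.
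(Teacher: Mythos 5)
Your proof is correct and follows essentially the same route as the paper, which derives this corollary directly from Lemma~\ref{l3} and Proposition~\ref{l4} together with the heredity of $\aleph$-spaces and the (closed) embedding $X\cong\mathcal{F}_1(X)\subseteq\mathcal{K}(X)$. The only difference is that you spell out the regrouping of $\mathcal{B}$ by index tuples $(n_1,\dots,n_r)$ to get $\sigma$-local finiteness, a bookkeeping step the paper leaves implicit.
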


Indeed, we prove that, for space $X$, if $\mathcal{F}_2(X)$ has a $\sigma$-HCP $k$-network then $\mathcal{K}(X)$ is an $\aleph$-spac, see Theorem~\ref{tt2}. Foged introduced $\sigma$-hereditarily closure-preserving (abbr.
HCP) $k$-network to characterize the closed image of a metric space,
every $\aleph$-space has a $\sigma$-HCP $k$-network; however, a
space with a $\sigma$-HCP $k$-network need not be an $\aleph$-space, such as $S_{\omega_{1}}$. But the following theorem shows that if $\mathcal{F}_{2}(X)$ has a $\sigma$-HCP $k$-network then $\mathcal{K}(X)$ is an $\aleph$-space.

\begin{theorem}\label{tt2}
The following are equivalent for a space $X$.
\begin{enumerate}
\item $X$ is an $\aleph$-space;
\item $\mathcal{F}_2(X)$ has a $\sigma$-HCP $k$-network;
\item $\mathcal{F}(X)$ has a $\sigma$-HCP $k$-network;
\item $\mathcal{F}(X)$ is an $\aleph$-space.
\item $\mathcal{K}(X)$ is an $\aleph$-space.
\end{enumerate}
\end{theorem}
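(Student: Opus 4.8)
The plan is to establish the cycle of implications $(1)\Rightarrow(5)\Rightarrow(4)\Rightarrow(3)\Rightarrow(2)\Rightarrow(1)$, using the machinery already assembled. The implication $(1)\Rightarrow(5)$ is exactly Corollary~\ref{cc0}, which itself follows from Lemma~\ref{l3} and Proposition~\ref{l4}: if $\mathcal{P}=\bigcup_n\mathcal{P}_n$ is a $\sigma$-locally finite $k$-network for $X$, then $\mathcal{B}=\bigcup_n\mathcal{B}_n$ (where $\mathcal{B}_n$ is built from $\mathcal{P}_n$) is a $\sigma$-locally finite $k$-network for $\mathcal{K}(X)$. The implications $(5)\Rightarrow(4)$ and $(4)\Rightarrow(3)$ are nearly immediate: $\mathcal{F}(X)$ is a subspace of $\mathcal{K}(X)$ (as noted in Section~2), the $\aleph$-space property is hereditary, and every $\aleph$-space has a $\sigma$-HCP $k$-network (this is Foged's result, cited in the paragraph preceding the theorem). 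Likewise $(3)\Rightarrow(2)$ is heredity of ``having a $\sigma$-HCP $k$-network'' to the subspace $\mathcal{F}_2(X)\subseteq\mathcal{F}(X)$; one should note that a $\sigma$-HCP $k$-network restricted to a subspace remains a $\sigma$-HCP $k$-network there, since HCP is inherited by traces and the $k$-network condition for a subspace only involves compact subsets of the subspace.

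The real content is $(2)\Rightarrow(1)$: from a $\sigma$-HCP $k$-network $\mathcal{Q}=\bigcup_n\mathcal{Q}_n$ on $\mathcal{F}_2(X)$ we must produce a $\sigma$-locally finite $k$-network on $X$. First I would transfer structure down to $X$ via the natural embedding $i\colon X\to\mathcal{F}_2(X)$, $x\mapsto\{x\}$, which is a closed embedding; pulling $\mathcal{Q}$ back along $i$ gives a $\sigma$-HCP $k$-network on $X$, so $X$ already has a $\sigma$-HCP $k$-network and in particular is an $\aleph_0$-space-like structure at the local level. The point of using $\mathcal{F}_2(X)$ rather than $X$ itself is to rule out the sequential fan $S_{\omega_1}$: a space with a $\sigma$-HCP $k$-network is an $\aleph$-space precisely when it contains no closed copy of $S_{\omega_1}$ (this is the standard characterization — a space with a $\sigma$-HCP $k$-network is an $\aleph$-space iff it is ``$\sigma$-strongly HCP'' iff no closed $S_{\omega_1}$). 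So the key step is to show: \emph{if $X$ contains a closed copy of $S_{\omega_1}$, then $\mathcal{F}_2(X)$ cannot have a $\sigma$-HCP $k$-network.} The idea is that the fan $S_{\omega_1}$, with uncountably many convergent sequences $T_\alpha\to x$, gives rise inside $\mathcal{F}_2(X)$ to an uncountable family of pairs $\{x,t\}$ with $t\in T_\alpha$ converging (as $t$ ranges within $T_\alpha$) to the point $\{x,x\}=\{x\}$; assembling these one obtains a closed subspace of $\mathcal{F}_2(X)$ that maps onto something like $S_{\omega_1}\times(\text{sequence})$ or, more usefully, contains a closed copy of a space known to have no $\sigma$-HCP $k$-network. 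One clean route: $\mathcal{F}_2(S_{\omega_1})$ contains a closed copy of $S_{\omega_1}\times[0,\omega]$ minus suitable points, or directly a closed copy of $S_{\omega_1}$ ``with multiplicity'' that is not an $\aleph$-space; since having a $\sigma$-HCP $k$-network is closed-hereditary, this is a contradiction.

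The main obstacle, then, is this last step — producing the right closed subspace of $\mathcal{F}_2(X)$ and citing or proving that it has no $\sigma$-HCP $k$-network. I would handle it by working concretely in $\mathcal{F}_2(S_{\omega_1})$: fix the fan point $x=\{x\}$ and for each $\alpha<\omega_1$ and each $n$, let $t_{\alpha,n}$ be the $n$-th point of $T_\alpha$; consider the set $Z=\{\{x\}\}\cup\{\{x,t_{\alpha,n}\}:\alpha<\omega_1,n\in\mathbb N\}\cup\{\{t_{\alpha,n}\}:\alpha<\omega_1,n\in\mathbb N\}$, verify it is closed in $\mathcal{F}_2(X)$ using the description of Vietoris basic sets $\langle U_1,\dots,U_k\rangle$ and the fact that the $T_\alpha\cup\{x_\alpha\}$ are clopen in the topological sum, and check that for each fixed $\alpha$ the sequence $\{x,t_{\alpha,n}\}\to\{x\}$ as $n\to\infty$ while these sequences are ``independent'' across $\alpha$ — so $Z$ collapses onto a copy of $S_{\omega_1}$ (identify each sequence), hence a perfect preimage / closed subspace argument shows $Z$ is not an $\aleph$-space, yet $Z$ would inherit a $\sigma$-HCP $k$-network from $\mathcal{F}_2(X)$. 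This contradiction completes $(2)\Rightarrow(1)$, and with it the theorem. A subtlety to watch is that a $\sigma$-HCP $k$-network does pass to closed subspaces (trace of an HCP family along a closed set is HCP, and the $k$-network property only sees compacta of the subspace), so the reduction to $Z$ is legitimate; I expect the bookkeeping for closedness of $Z$ and the non-$\aleph$ conclusion to be the only genuinely technical points.
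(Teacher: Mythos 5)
Your outer structure (the cycle $(1)\Rightarrow(5)\Rightarrow(4)\Rightarrow(3)\Rightarrow(2)\Rightarrow(1)$, with the easy implications handled by heredity and by Corollary~\ref{cc0}) matches the paper, and those parts are fine. The problem is the key implication $(2)\Rightarrow(1)$, where your argument has a genuine gap: the subspace $Z=\{\{x\}\}\cup\{\{x,t_{\alpha,n}\}\}\cup\{\{t_{\alpha,n}\}\}$ you build inside $\mathcal{F}_2(S_{\omega_1})$ is (as you essentially observe) just a copy of $S_{\omega_1}$ again --- every point other than $\{x\}$ is isolated in $Z$, and the traces of the sets $\langle U\rangle$ give exactly the fan neighborhoods of $\{x\}$. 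But $S_{\omega_1}$ \emph{does} have a $\sigma$-HCP $k$-network while failing to be an $\aleph$-space; this is precisely the example the paper recalls just before the theorem. So ``$Z$ inherits a $\sigma$-HCP $k$-network from $\mathcal{F}_2(X)$ yet $Z$ is not an $\aleph$-space'' is not a contradiction at all --- it is consistent with the Junnila--Yun characterization you quote, since $Z$ itself contains a closed copy of $S_{\omega_1}$. In other words, you cannot finish by exhibiting a closed subspace that merely fails to be an $\aleph$-space; you would need a closed subspace with \emph{no} $\sigma$-HCP $k$-network, and you neither produce one nor cite a result (e.g.\ about $S_{\omega_1}\times(\omega+1)$) that supplies one. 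Conflating ``not $\aleph$'' with ``no $\sigma$-HCP $k$-network'' is exactly the distinction the theorem is designed to exploit.

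The paper's argument for $(2)\Rightarrow(1)$ works directly with a $\sigma$-HCP \emph{closed} $k$-network $\mathcal{P}=\bigcup_n\mathcal{P}_n$ of $\mathcal{F}_2(X)$ rather than passing to a closed subspace. Assuming $X$ is not an $\aleph$-space (so, by Junnila--Yun, $X$ contains a closed copy of $S_{\omega_1}$ with sequences $x_i(\alpha)\to x$), it shows: (i) any choice of one pair $\{x_{i_\alpha}(\alpha),x_{j_\alpha}(0)\}$ for $\alpha<\beta$ is closed discrete in $\mathcal{F}_2(X)$; (ii) the sets $K_\alpha$ built from the $\alpha$-th and $0$-th sequences together with $x$ are compact in $\mathcal{F}_2(X)$. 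Applying the $k$-network to $K_\alpha$ inside the open complement of the previously chosen closed discrete set, a transfinite induction produces uncountably many \emph{distinct} members $P_\alpha\in\mathcal{P}$ each meeting the convergent sequence $\{\{x,x_j(0)\}:j\in\mathbb{N}\}\to\{x\}$ in an infinite set. Uncountably many of them land in a single HCP level $\mathcal{P}_n$; choosing one point $y_n=\{x,x_{j_n}(0)\}$, with $j_n$ strictly increasing, from each of countably many distinct members then violates hereditary closure-preservation, since $y_n\to\{x\}$ but the selection must be closed discrete. This interplay between the two sequences (which forces the network members to pick up infinitely many terms of a single convergent sequence in $\mathcal{F}_2(X)$) is the idea missing from your proposal, and it is where the hypothesis on $\mathcal{F}_2(X)$ --- as opposed to $X$ --- is actually used.
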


\begin{proof}
(5) $\to$ (4) $\to$ (3) $\to$ (2) are trivial. By Corollary~\ref{cc0}, we have (1) $\to$ (5). It suffices to prove (2) $\to$ (1).

Let $\mathcal{P}=\bigcup_{n\in \mathbb{N}}\mathcal{P}_n$ be a
$\sigma$-HCP closed $k$-network of $\mathcal{F}_2(X)$, where each
$\mathcal{P}_n$ is HCP. Suppose $X$ is not an $\aleph$-space, then it follows from
\cite{JY1992} that $X$ contains a closed copy of
$$S_{\omega_1}=\{x\}\cup\{x_n(\alpha): n\in \mathbb{N}, \alpha\in
\omega_1\},$$ where $x_n(\alpha)\to x$ as $n\to \infty$ for each
$\alpha\in \omega_1$. In order to obtain a contradiction, we first prove the following two claims. For any $\beta<\omega_1$, we take any fixed $i_{\alpha}, j_{\alpha}\in\mathbb{N}$ for any $0<\alpha<\beta$.

\smallskip
{\bf Claim 1} For any $\beta<\omega_1$, $H_{\beta}=\{\{x_{i_\alpha}(\alpha),
x_{j_\alpha}(0)\}: 0<\alpha<\beta\}$ is closed discrete in
$\mathcal{F}_2(X)$.

\smallskip
Indeed, fix an arbitrary $\beta<\omega_1$, and take any $z\in
\mathcal{F}_2(X)$. If $z\in X\setminus \{x\}$, pick an open
neighborhood $U$ of $z$ in $X$ such that $|U\cap S_{\omega_1}|\leq
1$, then $\langle U\rangle\cap H_{\beta}=\emptyset$; if $z=x$, choose an
open neighborhood $U$ of $x$ such that each $x_{i_\alpha}(\alpha)\notin
U$, then $\langle U\rangle\cap H_{\beta}=\emptyset$; if $z=\{z_1, z_2\}\in
\mathcal{F}_2(X)\setminus X$, pick an open neighborhood $V_i $ of
$z_i$ in $X$ for each $i\leq 2$ such that either $|V_i\cap
S_{\omega_1}|\leq 1$ or $\{x_{i_\alpha}(\alpha): \alpha<\beta\}\cap
V_i=\emptyset$ for each $i\leq 2$. Then it is easy to check $|\langle
V_1, V_2\rangle \cap H_{\beta}|\leq 1$. Hence, $H_{\beta}$ is closed discrete.

\smallskip
{\bf Claim 2} For any $0<\alpha<\omega_1$,
$$K_\alpha=\{\{x_i(\alpha), x_j(0)\}, \{x_i(\alpha), x\}, \{x,
x_j(0)\}: i, j\in \mathbb{N}\}\cup \{\{x\}\}$$ is compact in
$\mathcal{F}_2(X)$.

\smallskip
Let $\mathcal{U}$ be an arbitrary open cover of $K_{\alpha}$, and let
$U\in \mathcal{U}$ with $\{x\}\in U$. Pick an open neighborhood $V$
of $x$ in $X$ such that $\langle V\rangle\subset U$. Obviously,
there exists $k\in\mathbb{N}$ such that $x_i(\alpha), x_j(0)\in V$
whenever $i, j\geq k$. Then $$L_1=\{\{x_i(\alpha), x_j(0)\}: i, j\geq
k\}\cup \{\{x\}\}\subset \langle V\rangle.$$ Note that
$\{x_i(\alpha), x_j(0)\}\to \{x_i(\alpha), x\}$ as $j\to \infty$,
$\{x_i(\alpha), x_j(0)\}\to \{x, x_j(0)\}$ as $i\to \infty$, it is
easy to see that $$L_2=\{\{x_i(\alpha), x_j(0)\}: i, j < k\},
L_3=\{\{x_i(\alpha), x_j(0)\}, \{x_i(\alpha), x\}: i< k,
j\in\mathbb{N}\}$$ and $$L_4=\{\{x_i(\alpha), x_j(0)\}, \{x, x_j(0)\}:
j< k, i\in\mathbb{N}\}$$ are compact in $\mathcal{F}_2(X)$. Hence
$K_\alpha=L_1\cup L_2\cup L_3\cup L_4$ could be covered by a finite
subfamily of $\mathcal{U}$. Therefore, $K_{\alpha}$ is compact in
$\mathcal{F}_2(X)$.

By induction, we can find a uncountable subset $\{P_\alpha: \alpha<\omega_1\}$ of $\mathcal{P}$ such that for any $\alpha\in\omega_{1}$ we have $\{x_{i_\alpha}(\alpha),
x_{j_\alpha}(0)\}\in P_\alpha$ for some $i_\alpha, j_\alpha\in
\mathbb{N}$ and $$|P_\alpha\cap \{\{x, x_j(0)\}: j\in
\mathbb{N}\}|=\omega.$$

\smallskip
Indeed, since $\mathcal{P}$ is a closed $k$-network, there is a finite subfamily
$\mathcal{P}_{1}$ of $\mathcal{P}$ which covers $K_1$, then it is easy to verify that there exists a
$P_1\in \mathcal{P}_{1}$ such that $\{x_{i_1}(1), x_{j_1}(0)\}\in P_1$
for some $i_1, j_1\in \mathbb{N}$ and $|P_1\cap \{\{x, x_j(0)\}:
j\in \mathbb{N}\}|=\omega$. Let $\alpha<\omega_1$ and suppose for each $\beta<\alpha$, we have
chosen $P_\beta\in \mathcal{P}$ such that $\{x_{i_\beta}(\beta),
x_{j_\beta}(0)\}\in P_\beta$ for some $i_\beta, j_\beta\in
\mathbb{N}$ and $|P_\beta\cap \{\{x, x_j(0)\}: j\in
\mathbb{N}\}|=\omega$. Since $\{\{x_{i_\beta}(\beta),
x_{j_\beta}(0)\}: \beta<\alpha\}$ is closed by Claim 1 and $$K_\alpha\subset
\mathcal{F}_2(X)\setminus \{\{x_{i_\beta}(\beta), x_{j_\beta}(0)\}:
\beta<\alpha\},$$ there is a finite subfamily $\mathcal{P}_{\alpha}\subset
\mathcal{P}$ with $$K_\alpha\subset \bigcup \mathcal{P}_{\alpha}\subset
\mathcal{F}_2(X)\setminus \{\{x_{i_\beta}(\beta), x_{j_\beta}(0)\}:
\beta<\alpha\},$$ then there is a $P_\alpha\in\mathcal{P}_{\alpha}$ such
that $\{x_{i_\alpha}(\alpha), x_{j_\alpha}(0)\}\in P_\alpha$ for
some $i_\alpha, j_\alpha\in \mathbb{N}$ and $|P_\alpha\cap \{\{x,
x_j(0)\}: j\in \mathbb{N}\}|=\omega$. Moreover, $P_{\alpha}\neq P_{\beta}$ for any $\beta<\alpha$.

\smallskip
Clearly, $\{P_\alpha: \alpha<\omega_1\}\subset \mathcal{P}$, hence there exists $n\in\mathbb{N}$
such that a countable infinite subfamily $\mathcal{B}$ of $\{P_\alpha:
\alpha<\omega_1\}$ is contained in $\mathcal{P}_n$. Let $\mathcal{B}=\{P_{n}: n\in\mathbb{N}\}$. For each $n\in\mathbb{N}$, since $|P_n\cap \{\{x, x_j(0)\}: j\in
\mathbb{N}\}|=\omega$, pick $$y_{n}=\{x, x_{j_n}(0)\}\in P_\alpha\cap \{\{x, x_j(0)\}: j\in
\mathbb{N}\}.$$ We may assume that $j_{n}<j_{n+1}$ for any $n\in\mathbb{N}$. Then $y_{n}\to x$ as $n\rightarrow\infty$. However, $\mathcal{P}_n$ is HCP,
then $\{y_{n}\}$ is closed discrete, this is a contradiction.

Therefore, $X$ contains no closed copy of $S_{\omega_1}$, that is, $X$ is an
$\aleph$-space.
\end{proof}

Next we discuss whether $\mathcal{K}(X)$ has a point-countable base whenever $X$ has a point-countable weak base. In order to solve this question, we first prove two propositions, which give the relations of a space $X$ and its hyperspace $\mathcal{K}(X)$ containing a copy of $S_{\omega}$ or $S_{2}$ ( see Propositions~\ref{p1} and~\ref{p2} respectively).

\begin{proposition}\label{p1}
If $X$ contains a closed copy of $S_\omega$, then $\mathcal{F}_2(X)$
contains a closed copy of $S_2$.
\end{proposition}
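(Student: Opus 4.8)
The plan is to exhibit an explicit closed copy of the Arens space $S_2$ inside $\mathcal{F}_2(X)$, built from the closed copy of $S_\omega$ sitting in $X$. Write the copy of $S_\omega$ as $\{p\}\cup\{z_n(k):n,k\in\mathbb{N}\}$, where for each fixed $k$ the sequence $(z_n(k))_n$ converges to $p$, and these are the only nontrivial convergent sequences. The natural candidate is to set $\infty=\{p\}$, to let the ``middle layer'' points be the doubletons $a_k=\{p,w_k\}$ for a suitable auxiliary sequence $w_k\to p$ in $X$ drawn from the copy of $S_\omega$ (say $w_k=z_1(k)$, after relabeling so the $w_k$ are distinct and do not meet the other sequences), and to let the isolated points be the doubletons $\{z_n(k),w_k\}$ for $n$ large; more precisely, for each $k$ fix the sequence $n\mapsto\{z_n(k),w_k\}$, which converges in $\mathcal{F}_2(X)$ to $\{p,w_k\}=a_k$ as $n\to\infty$, and note $a_k=\{p,w_k\}\to\{p\}=\infty$ as $k\to\infty$ since both coordinates converge to $p$. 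One must choose the two-indexed family so that the $w_k$ are pairwise distinct, distinct from $p$, and chosen from distinct ``spokes'' so that no spurious convergences are introduced; this is a routine bookkeeping step using that $S_\omega$ has $\omega$ many spokes.

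The core verification has two halves. First, that the subspace $A=\{\infty\}\cup\{a_k:k\in\mathbb{N}\}\cup\{\{z_n(k),w_k\}:n,k\in\mathbb{N}\}$ of $\mathcal{F}_2(X)$ is homeomorphic to $S_2$: one checks that each $\{z_n(k),w_k\}$ is isolated in $A$ (choose disjoint small neighborhoods of $z_n(k)$ and $w_k$ in $X$ avoiding all other relevant points, using that $S_\omega$ is Fréchet so isolated points of spokes are topologically isolated), that a basic neighborhood of $a_k$ in $A$ is $\{a_k\}\cup\{\{z_n(k),w_k\}:n>m\}$ (take $\langle V_1,V_2\rangle$ with $V_1\ni p$ small and $V_2\ni w_k$ a singleton-type neighborhood, so that only the $k$-th spoke's tail enters), and that a basic neighborhood of $\infty=\{p\}$ in $A$ has the form $\{\infty\}\cup\bigcup_{k>m}(\text{nbhd of }a_k\text{ in }A)$: here one uses $\langle U\rangle$ for $U$ an open neighborhood of $p$ in $X$, and the fact that $U$ must contain all but finitely many $w_k$ and, for each such $k$, all but finitely many $z_n(k)$; this matches exactly the $S_2$ neighborhood description in Definition~\ref{d01}. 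Second, that $A$ is closed in $\mathcal{F}_2(X)$: take a point $z\in\mathcal{F}_2(X)\setminus A$ and separate it from $A$ by a basic open set, splitting into the cases $z\in X\setminus\{p\}$, $z=\{z_1,z_2\}$ a genuine doubleton not in $A$, and (the delicate case) $z$ a doubleton or singleton involving $p$ but not equal to any $a_k$ or $\{p\}$ — in each case one uses closedness of the copy of $S_\omega$ in $X$ together with the fact that in $S_\omega$, away from $p$, every point has a neighborhood meeting $S_\omega$ in a single point, while at $p$ one can avoid any prescribed finite set of $w_k$'s and any prescribed element of each remaining spoke.

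I expect the main obstacle to be the closedness verification at points of $\mathcal{F}_2(X)$ of the form $\{p,y\}$ with $y$ a point of $X$ that is a limit of points $w_k$ (or of points $z_n(k)$ across varying $k$): one must confirm that no such $\{p,y\}$ is a limit of the set $A$, which forces $y$ to not be a limit of the chosen auxiliary points, and this is where the freedom in choosing the $w_k$ from distinct spokes of $S_\omega$ — so that $\{w_k\}$ has no limit point other than $p$ in the copy of $S_\omega$, hence (by closedness of that copy) no limit point other than $p$ in $X$ — is used crucially. Once that is pinned down, the remaining separations are straightforward applications of Lemma~\ref{l1} and Lemma~\ref{l2} and the local structure of $S_\omega$. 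I would organize the write-up by first fixing notation and the choice of $w_k$, then stating the homeomorphism $A\cong S_2$ with the three neighborhood-base checks, then the closedness argument by the case analysis above.
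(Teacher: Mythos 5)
Your plan has a genuine error at its foundation: the two requirements you place on the auxiliary points $w_k$ are mutually inconsistent in the sequential fan. You need $w_k\to p$ (this is exactly what you invoke to get $a_k=\{p,w_k\}\to\{p\}=\infty$), and you also insist, as the ``crucial'' ingredient for the closedness/discreteness arguments, that the $w_k$ be taken from pairwise distinct spokes (e.g.\ $w_k=z_1(k)$). But in $S_\omega$ no sequence meeting infinitely many spokes converges to the apex: if $w_k=z_{g(k)}(k)$ with the spokes distinct, the basic neighborhood $\{p\}\cup\{z_n(k): n\geq f(k)\}$ with $f(k)=g(k)+1$ misses every $w_k$. (This non-convergence of diagonal sequences is precisely what distinguishes $S_\omega$ from a metrizable fan, and your neighborhood check at $\infty$ --- ``$U$ must contain all but finitely many $w_k$'' --- silently assumes the opposite.) With your choice, $a_k\not\to\{p\}$; worse, for $U=\{p\}\cup\{z_n(k):n\geq f(k)\}$ with $f$ as above, $\langle U\rangle$ meets your set $A$ only in $\{p\}$ (each isolated point $\{z_n(k),w_k\}$ also contains $w_k\notin U$), so $\{p\}$ is an isolated point of $A$ and $A$ is not a copy of $S_2$ at all. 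Conversely, if you do choose $w_k\to p$, the $w_k$ must eventually lie in finitely many spokes, so $\{w_k\}$ cannot be ``spread over distinct spokes'' and your stated closedness argument for the delicate case $\{p,y\}$ loses its justification as written.

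The repair is essentially the paper's construction: take all anchors from a single spoke, say $w_m=x_m(0)$, so the middle-layer points $y_m=\{x,x_m(0)\}$ do converge to $\{x\}$, and use the $m$-th spoke to supply the sequence $y_n(m)=\{x_m(0),x_n(m)\}\to y_m$. The role you wanted the ``distinct spokes'' choice to play is then played differently: one checks directly that $A=\{x\}\cup\{y_m\}\cup\{y_n(m)\}$ is closed in $\mathcal{F}_2(S_\omega)$ (a short case analysis on points of $\mathcal{F}_2(S_\omega)\setminus A$), and one certifies that the topology at $\{x\}$ is that of $S_2$ rather than anything first countable by showing that for every $f\in\mathbb{N}^{\mathbb{N}}$ the ``diagonal'' set $\{y_n(m): n\leq f(m)\}$ is closed discrete, which follows because its trace on the anchor coordinate sits in distinct positions of spoke $0$ while the other coordinate can be pushed out of a suitable neighborhood of $x$. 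So your overall strategy (apex $=\{p\}$, middle layer $=$ doubletons with $p$, isolated points $=$ doubletons from a spoke) is the right one, but the specific choice of anchors must come from one spoke, not one per spoke.
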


\begin{proof}
We write $S_\omega=\{x\}\cup\{x_n(m): m\in \omega, n\in
\mathbb{N}\}$, where each $x_n(m)$ is an isolated point of $X$ and the basic
neighborhoods of $x$ of the following form:
$$\{\{x\}\cup\{x_n(m): n\geq f(m)\}\}, \mbox{where}\ f\in\mathbb{N}^\omega.$$ Obviously, for any $m\in \omega$, $x_n(m)\to
x$ as $n\to \infty$. Next we construct a closed copy of $S_2$ in
$\mathcal{F}_2(X)$.

Let $y_n=\{x_n(0), x\}$ for each $n\in \mathbb{N}$,
$y_n(m)=\{x_m(0), x_n(m)\}$ for any $m, n\in \mathbb{N}$. It is easy
to see that $y_n=\{x_n(0), x\}\to x, y_n(m)\to y_m$ as $n\to
\infty$. Put
$$A=\{x\}\cup\{y_n: n\in \mathbb{N}\}\cup\{y_n(m): m, n\in
\mathbb{N}\}.$$
Now we claim that $A$ is a closed copy of $S_2$ in $\mathcal{F}_2(X)$. Indeed, since $\mathcal{F}_2(S_\omega)$ is closed in $\mathcal{F}_2(X)$, it suffices to prove that $A$ is a closed copy of $S_2$ in $\mathcal{F}_2(S_\omega)$.

First, we prove that $A$ is closed in $\mathcal{F}_2(S_\omega)$. Take any $z\in \mathcal{F}_2(S_\omega)\setminus A$, it suffices to find a
neighborhood of $z$ in $\mathcal{F}_2(S_\omega)$ which does not intersect
$A$. We divide the proof into the following two cases.

\smallskip
{\bf Case 1} $z\in S_\omega.$

\smallskip
Obviously, $z\neq x$, hence $\{z\}$ is an open neighborhood of $z$
in $S_\omega$, then $\langle
\{z\}\rangle$ is a neighborhood of $z$ in $\mathcal{F}_2(S_\omega)$ with
$\langle \{z\}\rangle \cap A=\emptyset$.

\smallskip
{\bf Case 2} $z\in \mathcal{F}_2(S_\omega)\setminus S_\omega.$

\smallskip
Then $z=\{z_1, z_2\}$, where $z_1, z_{2}\in S_\omega$.
If $x\not\in\{z_{1}, z_{2}\}$, then $\{z_1\}$ and $\{z_{2}\}$ are open neighborhoods of $z_1$ and $z_{2}$ in $S_\omega$ respectively. Clearly, $\langle \{z_1\},
\{z_{2}\}\rangle\cap A=\emptyset$ since $z\not\in A$ by our assumption; otherwise,
without loss of generality, we may assume that $z_1=x,
z_2=x_{n_0}(m_0)$ for some $m_0, n_0\geq 1$ since $z\in
\mathcal{F}_2(S_\omega)\setminus A$. Obviously, $S_\omega\setminus\{x_{n_0}(m_0), x_{m_{0}}(0)\}$ and $\{x_{n_0}(m_0)\}$ are open neighborhoods of $z_1$ and $z_2$ respectively in $S_\omega$, hence $$\langle S_\omega\setminus\{x_{n_0}(m_0), x_{m_{0}}(0)\},
\{x_{n_0}(m_0)\}\rangle\cap A=\emptyset.$$

\smallskip
Therefore, it follows from Cases 1 and 2 that $A$ is closed in $\mathcal{F}_2(S_\omega)$.

\smallskip
Next we prove that for any $f\in \mathbb{N}^\mathbb{N}$, the set
$B=\{y_n(m): n\leq f(m), m\in \mathbb{N}\}$ is discrete in
$\mathcal{F}_2(S_\omega)$. Since $A$ is closed, it suffices to find a
neighborhood $\widehat{U}$ of each point of $A$ such that
$|\widehat{U}\cap B|\leq 1$. Take an arbitrary point $z\in A$.

If $z=x$, then we can choose $g\in \mathbb{N}^\mathbb{N}$ such that
$g(i)>f(i)$ for each $i$. Let $$U=\{x\}\cup\{x_n(m): n\geq g(m),
m\in\omega\}.$$ Put $\widehat{U}=\langle U\rangle$. Then $\widehat{U}\cap B=\emptyset$. Now we only to consider the point $z\in
A\setminus\{x\}$. We write $z=\{z_1, z_2\}$. If there exists
$$z_i\notin \{x_n(m): n\leq f(m), m\in\mathbb{N}\}\cup\{x_n(0): n\in
\mathbb{N}\}$$ for some $i\in\{1, 2\}$, then without loss of
generality we assume that $$z_1\notin \{x_n(m): n\leq f(m),
m\in\mathbb{N}\}\cup\{x_n(0): n\in \mathbb{N}\}.$$ Therefore, there
are open neighborhoods $U, V$ of $z_1$ and $z_2$ in $S_\omega$ respectively
such that $U\cap \{x_n(i): n\leq f(i), i\in\mathbb{N}\}=\emptyset$
and $U\cap V=\emptyset$. Put $\widehat{U}=\langle U, V\rangle$. Thus
$\widehat{U}\cap B=\emptyset$. If $$\{z_1, z_2\}\subset \{x_n(m):
n\leq f(m), m\in\mathbb{N}\}\cup\{x_n(0): n\in \mathbb{N}\},$$ then $|\langle \{z_1\}, \{z_2\}\rangle\cap B|=1$.

\smallskip
In a word, $B$ is discrete and $A$ is a closed copy of $S_2$.
\end{proof}

By Proposition~\ref{p1}, we also can give a negative answer to \cite[Question 3.6]{GM2016}\footnote{In \cite{TLL2018}, the authors also gave a negative answer to this question.}, which shows that there exists a La\v{s}nev space $X$ such that $\mathcal{K}(X)$ is not a La\v{s}nev space. Moreover, it is well known that a space $X$ is a La\v{s}nev space if and only if $X$ is a Fr\'{e}chet-Urysohn space with a $\sigma$-compact-finite $k$-network.
Further, a space is $k^{\ast}$-metrizable if and only if it has a $\sigma$-compact-finite $k$-network \cite{BBK2008}. However, it follow from Lemma~\ref{l3} and Proposition~\ref{l4} that we have the following theorem, which was also proved in \cite{LT1998} and \cite{BBK2008} independently.

\begin{theorem}
A space $X$ is a $k^{\ast}$-metrizable space if and only if $\mathcal{K}(X)$ is a $k^{\ast}$-metrizable space.
\end{theorem}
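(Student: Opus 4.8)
The plan is to invoke the characterization quoted above from \cite{BBK2008}---a regular $T_2$ space is $k^{\ast}$-metrizable if and only if it has a $\sigma$-compact-finite $k$-network---and then to transport such a $k$-network back and forth between $X$ and $\mathcal{K}(X)$ using Lemma~\ref{l3} and Proposition~\ref{l4}.

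For the implication ``$X$ is $k^{\ast}$-metrizable $\Rightarrow$ $\mathcal{K}(X)$ is $k^{\ast}$-metrizable'', I would start from a $\sigma$-compact-finite $k$-network $\mathcal{P}=\bigcup_{n\in\mathbb{N}}\mathcal{P}_n$ of $X$ with each $\mathcal{P}_n$ compact-finite. Since a finite union of compact-finite families is again compact-finite, I may assume $\mathcal{P}_n\subseteq\mathcal{P}_{n+1}$ for every $n$. Writing $\mathcal{B}_n=\{\langle P_1,\dots,P_r\rangle:P_i\in\mathcal{P}_n,\ 1\le i\le r,\ r\in\mathbb{N}\}$ and $\mathcal{B}$ for the analogous family built from all of $\mathcal{P}$, the monotonicity of the $\mathcal{P}_n$ gives $\mathcal{B}=\bigcup_{n\in\mathbb{N}}\mathcal{B}_n$, because every finite subfamily of $\mathcal{P}$ lies inside some $\mathcal{P}_n$. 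Now Lemma~\ref{l3} shows that $\mathcal{B}$ is a $k$-network of $\mathcal{K}(X)$, and the compact-finite case of Proposition~\ref{l4} shows that each $\mathcal{B}_n$ is compact-finite in $\mathcal{K}(X)$; hence $\mathcal{B}$ is a $\sigma$-compact-finite $k$-network of $\mathcal{K}(X)$, so $\mathcal{K}(X)$ is $k^{\ast}$-metrizable.

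For the converse I would use that $X$ embeds in $\mathcal{K}(X)$ as a closed subspace via $x\mapsto\{x\}$ (this is standard, cf.\ \cite{M1951}: if $K\in\mathcal{K}(X)$ has $|K|\ge 2$, choose $a\neq b$ in $K$ and disjoint open $U_a\ni a$, $U_b\ni b$ in $X$, and observe that $\langle U_a,U_b,X\rangle$ is a Vietoris neighborhood of $K$ containing no singleton), together with the fact that possession of a $\sigma$-compact-finite $k$-network is closed-hereditary: if $\mathcal{Q}=\bigcup_n\mathcal{Q}_n$ witnesses this for $\mathcal{K}(X)$ and $F\subseteq\mathcal{K}(X)$ is closed, then $\bigcup_n\{Q\cap F:Q\in\mathcal{Q}_n\}$ works for $F$, since compact subsets of $F$ are compact in $\mathcal{K}(X)$ and relatively open subsets of $F$ are traces of open subsets of $\mathcal{K}(X)$. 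Applying this to $F=\{\{x\}:x\in X\}\cong X$ yields that $X$ is $k^{\ast}$-metrizable.

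I do not expect a genuine obstacle: the real content is already packaged in Lemma~\ref{l3} and Proposition~\ref{l4}. The only points that need a moment's care are the reduction to an increasing sequence $\{\mathcal{P}_n\}$---without it $\mathcal{B}$ need not be a countable union of the $\mathcal{B}_n$, and one must separately argue that $\mathcal{B}$ is $\sigma$-compact-finite---and the (routine) verifications that $X$ sits closedly in $\mathcal{K}(X)$ and that a $\sigma$-compact-finite $k$-network restricts to one on a closed subspace.
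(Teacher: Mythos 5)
Your proposal is correct and follows essentially the same route as the paper, which derives the theorem directly from the characterization of $k^{\ast}$-metrizability via $\sigma$-compact-finite $k$-networks \cite{BBK2008} together with Lemma~\ref{l3} and the compact-finite case of Proposition~\ref{l4}. The extra details you supply (passing to an increasing sequence $\{\mathcal{P}_n\}$ so that $\mathcal{B}=\bigcup_n\mathcal{B}_n$, and the closed embedding $x\mapsto\{x\}$ plus heredity of $\sigma$-compact-finite $k$-networks for the converse) are exactly the routine verifications the paper leaves implicit.
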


\begin{proposition}\label{p2}
If $X$ contains a closed copy of $S_2$, then $\mathcal{K}(X)$
contains a closed copy of $S_\omega$.
\end{proposition}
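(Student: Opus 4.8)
The plan is to reduce to the case $X=S_{2}$ and then to exhibit an explicit closed copy of $S_{\omega}$ inside $\mathcal{K}(S_{2})$. Since the copy of $S_{2}$ sits in $X$ as a closed subspace, $\mathcal{K}(S_{2})$ is (homeomorphic to) a closed subspace of $\mathcal{K}(X)$: indeed $\mathcal{K}(X)\setminus\mathcal{K}(S_{2})=\langle X, X\setminus S_{2}\rangle$ is open, and the Vietoris topology on $\mathcal{K}(S_{2})$ as a subspace of $\mathcal{K}(X)$ coincides with its intrinsic Vietoris topology. So it suffices to find a closed copy of $S_{\omega}$ in $\mathcal{K}(S_{2})$. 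Write $S_{2}=\{\infty\}\cup\{x_{n}:n\in\mathbb{N}\}\cup\{x_{n,m}:n,m\in\omega\}$ as in Definition~\ref{d01}, put $L=\{\infty\}\cup\{x_{n}:n\in\mathbb{N}\}$, and for $n,m\in\mathbb{N}$ put $C_{n,m}=L\cup\{x_{n,m}\}$; all of these are compact in $S_{2}$. The candidate is
$$\mathcal{A}=\{L\}\cup\{C_{n,m}:n,m\in\mathbb{N}\}\subseteq\mathcal{K}(S_{2}),$$
with $L$ the apex and $\{C_{n,m}:m\in\mathbb{N}\}$ the $n$-th spine.

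First I would check that $\mathcal{A}$, with the subspace topology, is homeomorphic to $S_{\omega}$. The members of $\mathcal{A}$ are pairwise distinct, and each $C_{n,m}$ is isolated in $\mathcal{A}$, since $\langle S_{2},\{x_{n,m}\}\rangle$ meets $\mathcal{A}$ only in $C_{n,m}$. For fixed $n$, the definition of the Vietoris topology gives $C_{n,m}\to L$ as $m\to\infty$: any basic neighbourhood of $L$ has $L$ inside the union of its slots, hence has $x_{n}$ in one slot, hence has $x_{n,m}$ in that slot for all large $m$, and then it contains $C_{n,m}=L\cup\{x_{n,m}\}$. The key step is the neighbourhood filter of $L$ in $\mathcal{A}$: for $g\in\mathbb{N}^{\mathbb{N}}$ set $U_{g}=\{\infty\}\cup\bigcup_{n\in\mathbb{N}}\bigl(\{x_{n}\}\cup\{x_{n,m}:m>g(n)\}\bigr)$, which is an open neighbourhood in $S_{2}$ of every point of $L$; then $\langle U_{g}\rangle$ is open in $\mathcal{K}(S_{2})$, $L\in\langle U_{g}\rangle$, and $\langle U_{g}\rangle\cap\mathcal{A}=\{L\}\cup\{C_{n,m}:m>g(n)\}$ because $C_{n,m}\subseteq U_{g}$ iff $x_{n,m}\in U_{g}$ iff $m>g(n)$. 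Together with the convergence along the spines this shows that $\{\langle U_{g}\rangle\cap\mathcal{A}:g\in\mathbb{N}^{\mathbb{N}}\}$ is a neighbourhood base at $L$ in $\mathcal{A}$; it has precisely the shape of the standard neighbourhood base at the apex of the sequential fan $S_{\omega}$ (which has countably many spines), so the map sending $L$ to the apex and $C_{n,m}$ to the $m$-th point of the $n$-th spine is a homeomorphism onto $S_{\omega}$.

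Next I would show that $\mathcal{A}$ is closed in $\mathcal{K}(S_{2})$, using the description
$$\mathcal{A}=\{K\in\mathcal{K}(S_{2}):L\subseteq K\ \mbox{and}\ |K\setminus L|\leq 1\}.$$
The set $\{K:L\subseteq K\}=\bigcap_{p\in L}\{K:p\in K\}$ is closed, since for each $p$ the set $\{K:p\notin K\}$ is open ($\langle S_{2}\setminus\{p\}\rangle$ is a neighbourhood of any such $K$). Inside the closed set $\{K:L\subseteq K\}$, the subset $\{K:|K\setminus L|\leq 1\}$ is relatively closed: if $L\subseteq K$ and $p,q\in K\setminus L$ are distinct (necessarily among the isolated points $x_{n,m}$), then $\langle S_{2},\{p\},\{q\}\rangle$ is an open neighbourhood of $K$ all of whose members meeting $\{K':L\subseteq K'\}$ contain $L\cup\{p,q\}$, hence have at least two points outside $L$. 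Therefore $\mathcal{A}$ is closed in $\mathcal{K}(S_{2})$, and so in $\mathcal{K}(X)$, which completes the proof.

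I expect the main obstacle to be making sure that $\mathcal{A}$ genuinely carries the fan topology and does not degenerate — in particular that no ``diagonal'' family $\{C_{n,m(n)}:n\in\mathbb{N}\}$ converges to $L$, i.e.\ that $\mathcal{K}(S_{2})$ is not first countable at $L$. This is exactly where the open sets $U_{g}$ are used: given any $m(\cdot)$, take $g=m(\cdot)$, and then $\langle U_{g}\rangle$ is a neighbourhood of $L$ missing every $C_{n,m(n)}$; the availability of such $U_{g}$ for all $g\in\mathbb{N}^{\mathbb{N}}$ reflects the fact that $\infty$ has no countable neighbourhood base in $S_{2}$. The remaining points (compactness of $L$ and of each $C_{n,m}$, and the coincidence of the two Vietoris topologies on $\mathcal{K}(S_{2})$) are routine.
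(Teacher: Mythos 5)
Your proposal is correct and is essentially the paper's own argument: the paper uses exactly the same candidate family, namely $\{K\}\cup\{K\cup\{y_i(n)\}: i,n\in\mathbb{N}\}$ where $K$ is the apex together with the limit points of the spines of the closed copy of $S_2$, proves it is closed by the same case distinction (a point of $K$ missing, or two spine points present), and checks the fan structure inside $\mathcal{K}(S)$ closed in $\mathcal{K}(X)$. Your verification of the $S_\omega$ topology via the trace of the sets $\langle U_g\rangle$, and of closedness via the description $\mathcal{A}=\{K: L\subseteq K,\ |K\setminus L|\leq 1\}$, is only a mild repackaging of the paper's check that each $\{K_i(n): i\leq f(n)\}$ is discrete.
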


\begin{proof}
Let $$S=\{y\}\cup\{y_n: n\in \mathbb{N}\}\cup\{y_i(n): i, n\in\mathbb{N}\}$$ be a
closed copy of $S_2$ in $X$, where $y_i(n)\to y_n$ as $i\to\infty$
for each $n\in \mathbb{N}$, $y_n\to y$ as $n\to\infty$ and for any
$f\in \mathbb{N}^\mathbb{N}$, $\{y_i(n): i\leq f(n), n\in
\mathbb{N}\}$ is discrete.

Let $K=\{y\}\cup \{y_n: n\in \mathbb{N}\}$, and for each $i,
n\in\mathbb{N}$, let $K_i(n)=\{y_i(n)\}\cup K$. Obviously, all $K,
K_i(n)\in \mathcal{K}(X)$, and $K_i(n)\to K$ as $i\to\infty$ for
each $n\in\mathbb{N}$. Let $$A=\{K\}\cup\{K_i(n): i,
n\in\mathbb{N}\}.$$ We claim that $A$ is a closed copy of $S_\omega$
in $\mathcal{K}(X)$.

Indeed, since $S$ is closed in $X$, it follows that $\mathcal{K}(S)$
is closed in $\mathcal{K}(X)$. In order to prove the closedness of
$A$, it suffices to show that $A$ is closed in $\mathcal{K}(S)$.

For $H\in \mathcal{K}(S)\setminus A$, we need to find a neighborhood
$\widehat{U}$ of $H$ in $\mathcal{K}(S)$ such that $\widehat{U}\cap
A=\emptyset$. We consider the following two cases.

\smallskip
{\bf Case 1} $K\setminus H\neq\emptyset$.

\smallskip
Pick $z\in K\setminus H$, and for each $x\in H$, take an open
neighborhood $V_x$ of $x$ in $X$ such that $z\notin V_x$. Then there
are finitely many $V_{x_i}(i\leq k)$ such that $H\subset
 \bigcup_{i\leq k}V_{x_i}$. Let $\widehat{U}=\langle V_{x_1}, ...,
 V_{x_k}\rangle$. Then it is easy to check that $\widehat{U}\cap
 A=\emptyset$.

\smallskip
{\bf Case 2} $K\subset H$.

\smallskip
Since $H\notin A$, it follows that $|H\cap\{y_i(n): i, n\in
\mathbb{N}\}|\geq 2$. Pick $z_1, z_2\in H\cap\{y_i(n): i, n\in
\mathbb{N}\}|$, and let $\widehat{U}=\langle S, \{z_1\},
\{z_2\}\rangle$. Then $\widehat{U}$ is an open neighborhood of $H$ and
$\widehat{U}\cap A=\emptyset$.

Therefore, $A$ is closed in $\mathcal{K}(S)$.

\smallskip
Next we prove that $A$ is a copy of $S_\omega$. Indeed, for any $f\in
\mathbb{N}^\mathbb{N}$, we claim that $B=\{K_i(n): n\in \mathbb{N}, i\leq
f(n)\}$ is discrete in $\mathcal{K}(S)$. Since $A$ is closed in
$\mathcal{K}(S)$, we only show that $B$ is discrete in $A$. Take an
arbitrary $H\in A$. Then it suffices to find a neighborhood
$\widehat{U}$ of $H$ in $\mathcal{K}(S)$ such that $\widehat{U}$ intersects at most one element of
$B$.
If $H\not\in B$, then let $U=S\setminus \{y_i(n): n\in \mathbb{N},
i\leq f(n)\}$. Obviously, $U$ is open in $S$, thus put
$\widehat{U}=\langle U\rangle$. It is easy to see that $\widehat{U}\cap
B=\emptyset$.  Now assume $H\in B$, then $H=K\cup\{y_i(n)\}$ for
some $i, n\in\mathbb{N}$. Let $\widehat{U}=\langle S,
\{y_i(n)\}\rangle$. Hence it is easy to see that $|\widehat{U}\cap
B|=1$.

\smallskip
In a word, $A$ is a closed copy of $S_\omega$ in $\mathcal{K}(X)$.
\end{proof}

The following theorem shows that if a space $X$ has a point-countable base then $\mathcal{K}(X)$ also has.

\begin{theorem}\label{tt1}
The following statements are equivalent for a space $X$.
\begin{enumerate}
\item $X$ has a point-countable base.

\item $\mathcal{F}_2(X)$ is a Fr\'echet-Urysohn space with a point-countable $k$-network.

\item $\mathcal{K}(X)$ is a Fr\'echet-Urysohn space with a point-countable $k$-network.

\item $\mathcal{K}(X)$ has a point-countable weak base.

\item $\mathcal{K}(X)$ has a compact-countable weak base.

\item $\mathcal{K}(X)$ has a point-countable base.
\end{enumerate}
\end{theorem}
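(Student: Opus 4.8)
The plan is to establish the cycle of implications $(1)\Rightarrow(6)\Rightarrow(5)\Rightarrow(4)\Rightarrow(3)\Rightarrow(2)\Rightarrow(1)$, using the machinery already developed. The implications $(6)\Rightarrow(5)\Rightarrow(4)$ are immediate, since a point-countable base is a compact-countable weak base is a point-countable weak base (a base is a fortiori a weak base, and point-countable $\Rightarrow$ compact-countable is false in general, so for $(6)\Rightarrow(5)$ one should rather argue directly: a point-countable base in the metrizable-like setting obtained below is compact-countable because $\mathcal{K}(X)$ will be metrizable; alternatively reorganize as $(1)\Rightarrow(6)$ and $(6)\Rightarrow(3)$ trivially). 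The cleanest skeleton is: first prove $(1)\Rightarrow(6)$, which is the substantive ``positive'' direction; then note $(6)\Rightarrow(4)$ and $(6)\Rightarrow(3)$ are trivial (a point-countable base is a point-countable weak base, and a space with a point-countable base is first countable hence Fr\'echet--Urysohn, and a base is a $k$-network); then $(3)\Rightarrow(2)$ is trivial since $\mathcal{F}_2(X)$ is a closed subspace of $\mathcal{K}(X)$ and both properties (Fr\'echet--Urysohn, having a point-countable $k$-network) are hereditary to closed subspaces; $(4)\Rightarrow(5)$ and $(5)\Rightarrow(6)$ will be handled by the obstruction arguments; and finally $(2)\Rightarrow(1)$ closes the loop.

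For $(1)\Rightarrow(6)$: if $X$ has a point-countable base, then in particular $X$ is metrizable or, more relevantly, $X$ is a first-countable $\aleph_0$-space-like object — but actually the right fact is that a regular space with a point-countable base which is additionally, say, a $q$-space need not be metrizable, so I would instead invoke the known hyperspace result that $X$ has a point-countable base iff $\mathcal{K}(X)$ has a point-countable base by combining Lemma~\ref{l3} (applied to a point-countable base $\mathcal P$ of $X$, giving that $\mathcal B$ is a point-countable $k$-network — wait, $\mathcal B$ need not be point-countable). This is the first real obstacle: a point-countable base $\mathcal P$ of $X$ does not obviously yield a point-countable base of $\mathcal{K}(X)$, because a single compact $K\in\mathcal{K}(X)$ lies in $\langle P_1,\dots,P_r\rangle$ for uncountably many tuples in general. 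The resolution is to use that $X$ with a point-countable base has countable tightness / is first-countable, so that each $K$ is covered by finitely many elements of $\mathcal P$ meeting $K$, and only countably many $P\in\mathcal P$ meet $K$ (point-countability applied along $K$ plus compactness — this requires $K$ to be covered by countably many basic sets, i.e.\ hereditary separability-type control, which does hold since a space with a point-countable base is metrizable when it is also, e.g., collectionwise normal; but in full generality one uses Filippov/Ponomarev: a compact subspace with a point-countable base is metrizable and second countable). Thus only countably many elements of $\mathcal P$ meet $K$, hence only countably many basic Vietoris sets from $\mathcal P$ contain $K$, giving a point-countable base $\mathcal B$ for $\mathcal{K}(X)$; regularity and Lemma~\ref{l2} ensure $\mathcal B$ generates the topology.

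For the reverse-ish steps $(4)\Rightarrow(5)\Rightarrow(6)$ and $(2)\Rightarrow(1)$, the strategy is to rule out the sequential fan and the Arens space as subspaces. A space has a point-countable base iff it is (snf-countable, or weakly first countable) together with having a point-countable $k$-network and containing no closed copy of $S_\omega$ and no closed copy of $S_2$ — this is the Shibakov/Liu-type characterization. So from $(4)$ ($\mathcal{K}(X)$ has a point-countable weak base, hence $g$-first-countable, hence $snf$-countable) one shows $\mathcal{K}(X)$ contains no closed $S_\omega$ and no closed $S_2$: by Proposition~\ref{p2} a closed $S_2$ in $X$ forces a closed $S_\omega$ in $\mathcal{K}(X)$, and a closed $S_\omega$ in $\mathcal{K}(X)$ contradicts $g$-first-countability (the sequential fan is not $g$-first-countable); similarly by Proposition~\ref{p1} a closed $S_\omega$ in $X$ forces a closed $S_2$ in $\mathcal{F}_2(X)\subset\mathcal{K}(X)$, and a closed $S_2$ contradicts having a point-countable weak base (Arens' space has no point-countable weak base). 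Hence $X$ contains no closed $S_\omega$ and no closed $S_2$, so $X$ is first countable with a point-countable $k$-network, which by the classical theorem (Filippov) gives $X$ a point-countable base; then apply $(1)\Rightarrow(6)$. The step $(2)\Rightarrow(1)$ is analogous but easier: $\mathcal{F}_2(X)$ Fr\'echet--Urysohn with a point-countable $k$-network forbids a closed $S_2$ in $\mathcal{F}_2(X)$ hence (Proposition~\ref{p1}) forbids a closed $S_\omega$ in $X$, and Fr\'echet--Urysohnness of $\mathcal{F}_2(X)$ (plus the same argument transplanted to $X$, which embeds closedly) forbids a closed $S_2$ in $X$, so again $X$ is first countable with a point-countable $k$-network and Filippov's theorem finishes it. \textbf{Main obstacle:} the verification in $(1)\Rightarrow(6)$ that a point-countable base of $X$ yields a \emph{point-countable} (not merely point-countable-$k$-network) base of $\mathcal{K}(X)$ — i.e.\ controlling how many basic Vietoris tubes contain a fixed compact set — which relies on the metrizability of compact subspaces with a point-countable base; and the careful bookkeeping of which elements of $\mathcal P$ can meet a given compact $K\subset X$.
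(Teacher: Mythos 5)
Your overall skeleton matches the paper's: the trivial implications, Propositions~\ref{p1} and~\ref{p2} as the $S_\omega$/$S_2$ obstructions, and a Gruenhage--Michael--Tanaka-type theorem to convert ``nice convergence $+$ point-countable $k$-network'' into a point-countable base. Your treatment of (1)$\Rightarrow$(6) is a correct alternative to the paper's: you build the point-countable base of $\mathcal{K}(X)$ directly from a point-countable base $\mathcal{P}$ of $X$, using that compact Hausdorff subspaces with a point-countable base are metrizable (Mi\v{s}\v{c}enko, not Filippov/Ponomarev), hence separable, so only countably many members of $\mathcal{P}$ meet a fixed compact $K$ and only countably many basic Vietoris sets contain it; the paper instead shows $\mathcal{K}(X)$ is first-countable, gets a compact-countable $k$-network from Proposition~\ref{l4}, and quotes the GMT theorem. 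Either route works, and the same separability argument also disposes of your hesitation about (6)$\Rightarrow$(5): a point-countable base is automatically compact-countable, which is exactly what the paper invokes.

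The genuine gaps are in your ``negative'' directions. First, the assertion that a closed copy of $S_2$ contradicts having a point-countable weak base is false: Arens' space is the standard $g$-first-countable space, and being countable it has a countable, hence point-countable, weak base. So your attempt to rule out a closed $S_\omega$ in $X$ from hypothesis (4) via Proposition~\ref{p1} collapses, and with it the ``Shibakov/Liu-type characterization'' you lean on (which is never established, and which you could not feed anyway, since (4) gives $X$ a point-countable \emph{weak base}, not a point-countable $k$-network). The repair is the paper's (4)$\Rightarrow$(1): only $S_2$ in $X$ must be excluded (a closed $S_2$ in $X$ gives a closed $S_\omega$ in $\mathcal{K}(X)$ by Proposition~\ref{p2}, contradicting $g$-first-countability of $\mathcal{K}(X)$); then $X$, being $g$-first-countable with no closed $S_2$, is Fr\'echet--Urysohn, hence first-countable, and a first-countable space with a point-countable weak base already has a point-countable base -- no $k$-network theorem is needed there. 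Second, in (2)$\Rightarrow$(1) your claim that $X$ is first countable is unjustified at that point (Fr\'echet--Urysohn plus the absence of $S_2$ and $S_\omega$ does not by itself give first countability); the correct bridge, as in the paper, is that $X$ is Fr\'echet--Urysohn with no closed $S_\omega$, hence strongly Fr\'echet--Urysohn, and then GMT (strongly Fr\'echet $+$ point-countable $k$-network $\Rightarrow$ point-countable base) -- not Filippov's theorem -- finishes the argument.
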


\begin{proof}
(6) $\to$ (4), (6) $\to$ (3) and (3) $\to$ (2) are trivial. It is well known that a space with a point-countable base if and only if it has a compact-countable base, hence (6) $\to$ (5). Clearly, (5) $\to$ (6) is obvious if (4) $\to$ (6) holds. Therefore, it suffice to prove that (2) $\to$ (1), (1)
$\to$ (6) and (4) $\to$ (1).

(2) $\to$ (1). Since $X$ is a subspace of $\mathcal{F}_2(X)$, it
follows that $X$ has a point-countable $k$-network. We claim that
$X$ contains no closed copy of $S_\omega$; otherwise,
$\mathcal{F}_2(X)$ contains a closed copy of $S_2$ by Proposition
~\ref{p1}, which is a contradiction since any Fr\'echet-Urysohn space
contains no closed copy of $S_2$. Since a Fr\'echet-Urysohn space, which contains no closed copy of
$S_\omega$, is strongly Fr\'echet-Urysohn\footnote{A space $X$ is said to be {\it strongly
Fr$\acute{e}$chet-Urysohn} if the following condition is satisfied

(SFU) For every $x\in X$ and each sequence $\eta =\{A_{n}: n\in
\mathbb{N}\}$ of subsets of $X$ such that $x\in\bigcap_{n\in
\mathbb{N}}\overline{A_{n}}$, there is a sequence $\zeta =\{a_{n}:
n\in \mathbb{N\}}$ in $X$ converging to $x$ and intersecting
infinitely many members of $\eta$.}, the space $X$ is strongly Fr\'echet-Urysohn, hence $X$
has a point-countable base by \cite{GMT1984}.

(1) $\to$ (6). Since $X$ has a point-countable base, each compact
subset of $X$ is metrizable \cite{G1984}, hence each compact subset is separable;
moreover, it is easy to see that $X$ has a compact-countable $k$-network, thus $\mathcal{K}(X)$ has a compact-countable $k$-network by Proposition~\ref{l4}; further, it is obvious that each compact subset of $X$ has a countable character in $X$, which implies that
$\mathcal{K}(X)$ is first-countable. Since a first-countable space
with a point-countable $k$-network has a point-countable base
\cite{GMT1984}, $\mathcal{K}(X)$ has a point-countable base.

(4) $\to$ (1). Since $X$ is a closed subspace of
$\mathcal{K}(X)$, it follows that $X$ has a point-countable weak base. We claim
that $X$ contains no closed copy of $S_2$. Suppose not, we assume
that $X$ has a closed copy of $S_2$, then from Proposition ~\ref{p2} it follows that
$\mathcal{K}(X)$ contains a closed copy of $S_\omega$, this is a
contradiction since a $g$-first-countable space does not contain any
closed copy of $S_\omega$.
Since $X$ is a space with a point-countable weak base and contains
no closed copy of $S_2$, it is Fr\'echet-Urysohn. However, a
$g$-first-countable, Fr\'echet-Urysohn space is first-countable
and a first-countable space with a point-countable weak base has a
point-countable base, hence $X$ has a point-countable base.
\end{proof}

Note that a first-countable space with a $\sigma$-locally countable
$k$-network has a $\sigma$-locally countable base \cite{B1980}, we
can use the same proof of Theorem~\ref{tt1} to prove the following two theorems.

\begin{theorem}
The following statements are equivalent for a space $X$.
\begin{enumerate}
\item $X$ has a $\sigma$-locally countable base;

\item $\mathcal{K}(X)$ has a $\sigma$-locally countable weak base;

\item $\mathcal{K}(X)$ has a $\sigma$-locally countable base.

\end{enumerate}
\end{theorem}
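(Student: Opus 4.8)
\emph{Proof proposal.} The plan is to run the proof of Theorem~\ref{tt1} essentially verbatim, systematically replacing ``point-countable'' (base, $k$-network, weak base) by ``$\sigma$-locally countable'' and using, in place of the theorem ``first countable $+$ point-countable $k$-network $\Rightarrow$ point-countable base'', the cited result of \cite{B1980} that a first-countable space with a $\sigma$-locally countable $k$-network has a $\sigma$-locally countable base. The implication (3)$\to$(2) is trivial since every base is a weak base, so it suffices to prove (1)$\to$(3) and (2)$\to$(1). The tools are Lemma~\ref{l3}, the locally-countable case of Proposition~\ref{l4}, Proposition~\ref{p2}, and the same standard facts about $S_\omega$, $S_2$, $g$-first-countable spaces and Fr\'echet--Urysohn spaces that are invoked in the proof of Theorem~\ref{tt1}.

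For (1)$\to$(3): a $\sigma$-locally countable base is point-countable, so $X$ has a point-countable base; hence every compact subset of $X$ is metrizable, so separable, and, exactly as in the proof of Theorem~\ref{tt1}, every compact subset of $X$ then has countable character in $X$, whence $\mathcal{K}(X)$ is first-countable. Let $\mathcal{P}=\bigcup_{n}\mathcal{P}_{n}$ be the given base, each $\mathcal{P}_{n}$ locally countable; since a base is a $k$-network, $\mathcal{P}$ is a $\sigma$-locally countable $k$-network of $X$. I would then show that $\mathcal{B}=\{\langle P_1,\dots,P_r\rangle : P_i\in\mathcal{P},\, r\in\mathbb{N}\}$ is a $\sigma$-locally countable $k$-network of $\mathcal{K}(X)$: it is a $k$-network by Lemma~\ref{l3}, and for the $\sigma$-locally countable structure one decomposes $\mathcal{B}$ as the countable union, over $r\in\mathbb{N}$ and $(n_1,\dots,n_r)\in\mathbb{N}^{r}$, of the families $\{\langle P_1,\dots,P_r\rangle : P_i\in\mathcal{P}_{n_i}\}$, each of which is a subfamily of the hyperspace family built from the single locally countable family $\mathcal{P}_{n_1}\cup\cdots\cup\mathcal{P}_{n_r}$, hence locally countable in $\mathcal{K}(X)$ by Proposition~\ref{l4}. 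Then \cite{B1980} yields that $\mathcal{K}(X)$ has a $\sigma$-locally countable base.

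For (2)$\to$(1): since $X$ is (homeomorphic to) a closed subspace of $\mathcal{K}(X)$, it inherits a $\sigma$-locally countable weak base, and a $\sigma$-locally countable weak base is countable at each point (a point lies in only countably many members of each locally countable level), so both $X$ and $\mathcal{K}(X)$ are $g$-first-countable. If $X$ contained a closed copy of $S_2$, then by Proposition~\ref{p2} $\mathcal{K}(X)$ would contain a closed copy of $S_\omega$, contradicting $g$-first-countability of $\mathcal{K}(X)$; hence $X$ contains no closed copy of $S_2$, and a $g$-first-countable space with no closed copy of $S_2$ is Fr\'echet--Urysohn. A $g$-first-countable Fr\'echet--Urysohn space is first countable, and a first-countable space with a $\sigma$-locally countable weak base has a $\sigma$-locally countable base (the $\sigma$-locally countable analogue of the fact used at the end of the proof of Theorem~\ref{tt1}; for instance, in a first-countable space each weak-base element is a neighborhood of the relevant point, so one may pass to interiors). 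Thus $X$ has a $\sigma$-locally countable base, closing the cycle (3)$\to$(2)$\to$(1)$\to$(3).

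The routine parts (separation of $\mathcal{K}(X)$, closedness of $\mathcal{F}_1(X)$ in $\mathcal{K}(X)$, heredity of weak bases and of $\sigma$-local countability to closed subspaces) are handled exactly as for Theorem~\ref{tt1}. The only point that is not literally ``the same proof'' is the bookkeeping in (1)$\to$(3): Proposition~\ref{l4} is stated for a single locally countable family, so one must re-index $\mathcal{B}$ by the finitely many ``levels'' $(n_1,\dots,n_r)$ of its defining members to see that $\mathcal{B}$ is $\sigma$-locally countable; I expect this, together with keeping straight which of the two background implications (the $\sigma$-locally countable weak-base version, and the \cite{B1980} $k$-network version) is being applied at each step, to be the main thing to get right.
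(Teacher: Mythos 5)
Your proposal is correct and is essentially the paper's own argument: the paper gives no separate proof, stating only that the theorem follows by "the same proof of Theorem~\ref{tt1}" together with the cited fact from \cite{B1980}, which is exactly the substitution you carry out. Your extra bookkeeping (re-indexing $\mathcal{B}$ over the levels $(n_1,\dots,n_r)$ and noting that finite unions of locally countable families are locally countable, and passing to interiors of weak-base members in the first-countable case) correctly fills in the routine details the paper leaves implicit.
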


\begin{theorem}
The following statements are equivalent for a space $X$.
\begin{enumerate}
\item $X$ is metrizable;

\item $\mathcal{K}(X)$ has a $\sigma$-locally finite weak base;

\item $\mathcal{K}(X)$ is metrizable.

\end{enumerate}
\end{theorem}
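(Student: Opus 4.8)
The plan is to mimic the proof of Theorem~\ref{tt1} almost verbatim, replacing ``point-countable base'' by ``$\sigma$-locally finite base'' throughout, and using the following two facts in place of the corresponding facts about point-countable bases: first, that a space $X$ is metrizable if and only if it has a $\sigma$-locally finite base (the Nagata--Smirnov metrization theorem); and second, that a first-countable space with a $\sigma$-locally finite $k$-network is metrizable (this is the ``locally finite'' analogue of the result from \cite{GMT1984} used in Theorem~\ref{tt1}, and is also implicit in the discussion preceding the statement via \cite{B1980}). The implications (3) $\to$ (2) is trivial since a base is in particular a weak base. So it suffices to prove (1) $\to$ (3) and (2) $\to$ (1).

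For (1) $\to$ (3): assume $X$ is metrizable. Then $X$ has a $\sigma$-locally finite base, hence in particular a $\sigma$-locally finite $k$-network; every compact subset of $X$ is metrizable and therefore separable, and moreover has countable character in $X$. As in the proof of (1) $\to$ (6) in Theorem~\ref{tt1}, the countable character of compact subsets of $X$ gives that $\mathcal{K}(X)$ is first-countable, and Proposition~\ref{l4} applied to the $\sigma$-locally finite $k$-network of $X$ shows $\mathcal{K}(X)$ has a $\sigma$-locally finite $k$-network. A first-countable space with a $\sigma$-locally finite $k$-network is metrizable, so $\mathcal{K}(X)$ is metrizable. (Alternatively, one could invoke Corollary~\ref{cc0}-type reasoning, but the first-countability route is cleanest and parallels Theorem~\ref{tt1}.)

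For (2) $\to$ (1): suppose $\mathcal{K}(X)$ has a $\sigma$-locally finite weak base. Since $X$ is a closed subspace of $\mathcal{K}(X)$, $X$ inherits a $\sigma$-locally finite weak base. As in the proof of (4) $\to$ (1) in Theorem~\ref{tt1}, $X$ contains no closed copy of $S_2$: otherwise Proposition~\ref{p2} would give a closed copy of $S_\omega$ in $\mathcal{K}(X)$, contradicting that a $g$-first-countable space contains no closed copy of $S_\omega$. A space with a (point-countable, hence a fortiori $\sigma$-locally finite) weak base that contains no closed copy of $S_2$ is Fr\'echet--Urysohn; a $g$-first-countable Fr\'echet--Urysohn space is first-countable; and a first-countable space with a $\sigma$-locally finite weak base has a $\sigma$-locally finite base (a weak base for a first-countable space can be refined to, or already is, a base). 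Hence $X$ has a $\sigma$-locally finite base and is metrizable by Nagata--Smirnov.

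The main obstacle is the same one the authors flag before the theorem: verifying that the ``locally finite'' versions of the cited metrization-type lemmas are available, in particular that a first-countable space with a $\sigma$-locally finite $k$-network is metrizable and that the weak-base-to-base passage works in the $\sigma$-locally finite setting. Both are standard in generalized metric space theory (the first follows from \cite{B1980} together with the fact that a $\sigma$-locally finite $k$-network in a first-countable space can be used to build a $\sigma$-locally finite base), so no genuinely new argument is needed; the work is bookkeeping to check that every appeal to a point-countability result in Theorem~\ref{tt1} has a legitimate locally finite counterpart.
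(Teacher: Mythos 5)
Your proposal is correct and is essentially the proof the paper intends: the authors give no separate argument, only the instruction to repeat the proof of Theorem~\ref{tt1} with locally finite analogues of the cited facts, which is exactly what you do --- the key substitute lemma, that a first-countable space with a $\sigma$-locally finite $k$-network is metrizable, is the standard fact that first-countable $\aleph$-spaces are metrizable (e.g.\ via Foged's theorem on $g$-metrizability), and in your weak-base-to-base step the correct phrasing is ``take interiors'' (weak base elements need not be open, but in a Fr\'echet--Urysohn space they are sequential neighborhoods, hence neighborhoods). Only cosmetic remarks: in (1) $\to$ (3) you also need Lemma~\ref{l3} alongside Proposition~\ref{l4} (and the grouping of $\langle P_1,\ldots,P_r\rangle$ by the largest index $n$ to keep $\sigma$-local finiteness), and that direction could alternatively be settled at once by Michael's classical result that $\mathcal{K}(X)$ is metrizable whenever $X$ is.
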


Finally we prove that if $X$ is an $\alpha$-space\footnote{A space $X$ is an $\alpha$-space \cite{GM2016}
($\sigma^{\sharp}$-space in sense of \cite{H1971}) if there exists a
function $g: \mathbb{N}\times X\to \tau_X$ such that (i)
$\bigcap_{m\in\mathbb{N}}g(m, x)=\{x\}$, (ii) if $y\in g(m, x)$,
then $g(m, y)\subset g(m, x)$.} then $\mathcal{K}(X)$ is also an
$\alpha$-space.

\begin{theorem}
$X$ is an $\alpha$-space if and only if $\mathcal{K}(X)$ is an
$\alpha$-space.
\end{theorem}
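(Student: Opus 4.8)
The plan is to prove both implications, with essentially all the work in the forward one. For the implication ``$\mathcal{K}(X)$ is an $\alpha$-space $\Rightarrow$ $X$ is an $\alpha$-space'' I would first record that the class of $\alpha$-spaces is hereditary: if $g$ is an $\alpha$-function for $Y$ and $Z\subseteq Y$, then $(m,z)\mapsto g(m,z)\cap Z$ is an $\alpha$-function for $Z$, since $\bigcap_m g(m,z)=\{z\}$ gives (i) and intersecting with $Z$ preserves (ii). As $x\mapsto\{x\}$ is a homeomorphism of $X$ onto the closed subspace $\mathcal{F}_1(X)$ of $\mathcal{K}(X)$, the implication follows. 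For the converse, fix an $\alpha$-function $g\colon\mathbb{N}\times X\to\tau_X$. I may normalize $g$: replacing $g(m,x)$ by $\bigcap_{k\le m}g(k,x)$ keeps (i) and (ii) and makes $g(m+1,x)\subseteq g(m,x)$, and, using regularity, I would further arrange $\overline{g(m+1,x)}\subseteq g(m,x)$ (that this can be done without destroying (ii) is a short separate lemma). Write $O(m,K)=\bigcup_{x\in K}g(m,x)$ for $K\in\mathcal{K}(X)$. Two facts will drive the argument: (a) each cover $\{g(m,x):x\in X\}$ is interior-preserving, because $z\in\bigcap_{x\in A}g(m,x)$ forces $g(m,z)\subseteq\bigcap_{x\in A}g(m,x)$ by (ii); and (b) $\bigcap_m O(m,K)=K$ for every compact $K$. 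I would prove (b) as follows: given $z\in\bigcap_m O(m,K)$, pick $x_m\in K$ with $z\in g(m,x_m)$, so $x_m\in\uparrow_m z:=\{w:z\in g(m,w)\}$; the sets $\overline{\uparrow_m z}$ decrease, and $\bigcap_m\overline{\uparrow_m z}=\{z\}$ because for $p\ne z$ some $g(m_0,p)$ misses $z$, so by (ii) every $w\in g(m_0,p)$ has $z\notin g(m_0,w)$, whence $g(m_0,p)$ is a neighbourhood of $p$ disjoint from $\uparrow_{m_0}z$; since the $\overline{\uparrow_m z}\cap K$ are nonempty decreasing compacta, $\emptyset\ne\bigcap_m(\overline{\uparrow_m z}\cap K)=\{z\}\cap K$, i.e. $z\in K$.

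Next I would define, for $K\in\mathcal{K}(X)$ and $m\in\mathbb{N}$,
$$G(m,K)=\bigcup\Big\{\langle g(m,y_1),\dots,g(m,y_k)\rangle : k\in\mathbb{N},\ y_1,\dots,y_k\in K,\ K\subseteq\bigcup_{i=1}^k g(m,y_i)\Big\},$$
the union of all basic Vietoris neighbourhoods of $K$ coming from finite subcovers of $K$ by members of $\{g(m,x):x\in K\}$. Then $G(m,K)$ is open in $\mathcal{K}(X)$ (a union of basic open sets), contains $K$, and satisfies $G(m,K)\subseteq\langle O(m,K)\rangle$; the nesting of the $g(m,\cdot)$ also gives $G(m+1,K)\subseteq G(m,K)$. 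Condition (i), $\bigcap_m G(m,K)=\{K\}$, I would deduce from facts (a), (b): if $L\in\bigcap_m G(m,K)$ then $L\subseteq\bigcap_m O(m,K)=K$; and for $x\in K$, the witnessing finite subcover at level $m$ contains some $y$ with $x\in g(m,y)$, so $L$ meets $g(m,y)$, and an argument using $\overline{g(m+1,\cdot)}\subseteq g(m,\cdot)$ together with compactness of $L$ shows the relevant points of $L$ cluster only at $x$, giving $x\in\overline{L}=L$; hence $K\subseteq L$ and $L=K$.

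The crux is condition (ii), that $L\in G(m,K)$ implies $G(m,L)\subseteq G(m,K)$. The ``upper'' half is easy: if $L\in\langle g(m,y):y\in F\rangle$ with $F\subseteq K$ a finite subcover and $M\in\langle g(m,z):z\in E\rangle$ with $E\subseteq L$ a finite subcover, then each $z\in E$ lies in some $g(m,y)$, $y\in F$, so $g(m,z)\subseteq g(m,y)$ by (ii) and therefore $M\subseteq O(m,K)$. The hard part is to manufacture, from $E$ and $F$, a single finite subcover of $K$ witnessing $M\in G(m,K)$: the ``meets'' conditions that $F$ imposes on $L$ and that $E$ imposes on $M$ need not line up, because two points lying under a common $g(m,y)$ are not $\preceq_m$-comparable. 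I expect this matching of finite subcovers to be the real obstacle, and resolving it will likely require either recording additional combinatorial data with each neighbourhood in the definition of $G(m,K)$ (so that a witnessing subcover for $M$ can be read off directly), or using the interior-preserving property (a) to replace $E\cup F$ by a common refinement drawn from $K\cup L$. Once the definition is tuned so that (ii) goes through, the remaining verifications reduce to routine bookkeeping with compactness and properties (i) and (ii) of $g$.
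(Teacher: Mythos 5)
Your set-up runs parallel to the paper's up to the decisive point: the easy direction via heredity is fine, and your fact (b), $\bigcap_m\bigcup_{x\in K}g(m,x)=K$ for compact $K$, is exactly the paper's Claim (which the paper proves by a direct compactness argument using only (ii) and the nesting $g(m+1,x)\subseteq g(m,x)$, with no need for your closure refinement $\overline{g(m+1,x)}\subseteq g(m,x)$ --- a refinement you assert can be arranged ``without destroying (ii)'' but never prove, and which is not obviously available, since shrinking each $g(m,x)$ by regularity has no reason to respect the monotonicity axiom (ii)). The genuine gap is the one you flag yourself: you never verify condition (ii) for your $G$, i.e.\ that $L\in G(m,K)$ implies $G(m,L)\subseteq G(m,K)$; you prove only the ``upper'' containment $\bigcup$-side and explicitly defer the matching of the ``meets'' conditions to some future re-tuning of the definition. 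Since that verification is the entire content of the theorem, the proposal is not a proof. Moreover your union-over-all-finite-subcovers definition of $G(m,K)$ makes matters worse rather than better: membership $L\in G(m,K)$ carries no canonical witnessing subcover, and even your check of (i) is shaky, because the witnesses $g(m,y_m)$ with $x\in g(m,y_m)$ and $L\cap g(m,y_m)\neq\emptyset$ vary with $m$ and are not nested, so compactness of $L$ together with $\overline{g(m+1,\cdot)}\subseteq g(m,\cdot)$ does not force the chosen points of $L$ to cluster at $x$.

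For comparison, the paper avoids the union device entirely: for each $n$ and $K$ it fixes points $x_i(n,K)\in K$ so that $\{g(n,x_i(n,K)):i\le n(K)\}$ is a \emph{minimal} finite cover of $K$, and sets $G(n,K)=\langle g(n,x_1(n,K)),\dots,g(n,x_{n(K)}(n,K))\rangle$, a single basic Vietoris set. Condition (a) is then checked pointwise on $K$ (for each $x\in K$ choose a level at which $g(n(x),x)$ misses either $H$ or a witness point $y\in K\setminus H$, then transfer to the chosen cover by compactness and (ii)), and condition (b) is checked via Lemma 2.3.1 of Michael (Lemma 1 of the paper): the ``union'' half is your easy half, while the half you could not supply --- for every member of $K$'s cover there is a member of $H$'s cover contained in it --- is where the minimality of the chosen cover is invoked. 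So the ingredient missing from your construction is precisely the canonical (minimal) choice of subcover that gives each $G(n,K)$ a fixed presentation against which Lemma 1 can be applied; without it, or without some substitute argument, your condition (ii) remains unproved.
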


\begin{proof}
Assume that $X$ is an $\alpha$-space, then there exists a
$g$-function $g: \omega\times X\to \tau_X$ satisfying the
definition of $\alpha$-space; moreover, it follows from \cite[Lemma 3.22]{GM2016} that
we may assume that $g(n+1, x)\subset g(n, x)$ for each $x\in X$ and
$n\in\mathbb{N}$. We first prove the following claim.

\smallskip
{\bf Claim:} For any $K\in \mathcal{K}(X)$, we have $\bigcap_{n\in
\omega}(\bigcup_{x\in K}g(n, x))=K$.

\smallskip
It suffices to prove that, for any $y\notin K$, there exists
$k\in\mathbb{N}$ such that $y\notin \bigcup_{x\in K}g(k, x)$.
Obviously, for each $x\in K$, we can choose $n(x)\in \mathbb{N}$
such that $y\notin g(n(x), x)$, then $y\notin \bigcup_{x\in
K}g(n(x), x)$. Hence, from the compactness of $K$, there exists
$m\in\mathbb{N}$ such that $$K\subset \bigcup_{i\leq m}g(n(x_i),
x_i)\subset \bigcup_{x\in K}g(n(x), x).$$ Let $k=\max\{n(x_i): i\leq
m\}$. Then, for each $x\in K$, we have $x\in g(n(x_i), x_i)$ for some $i\leq
m$, $g(n(x_i), x)\subset g(n(x_i), x_i)$ and $g(k, x)\subset g(n(x_i),
x)$, hence $$K\subset \bigcup_{x\in K}g(k, x)\subset \bigcup_{i\leq
m}g(n(x_i), x_i)\ \mbox{and}\ y\notin \bigcup_{x\in K}g(k, x).$$ Therefore,
$\bigcap_{n\in \omega}(\bigcup_{x\in K}g(n, x))=K$.

For each $n\in\mathbb{N}$ and $K\in\mathcal{K}(X)$, choose $\{x_i(n,
K): i\leq n(K)\}\subset K$ for some $n(K)\in\mathbb{N}$ such that $\{g(n, x_i(n, K)): i\leq
n(K)\}$ is a minimal cover of $K$. Define $G: \omega\times
\mathcal{K}(X)\to \tau_{\mathcal{K}(X)}$ as follows:
$$G(n, K)=\langle g(n, x_1(n, K)), ..., g(n, x_{n(K)}(n, K))\rangle.$$
We claim that $G$ is a function satisfying the
definition of $\alpha$-space.

\smallskip
(a) $\bigcap_{n\in \omega}\{G(n, K): n\in\omega\}=K$.

\smallskip
In fact, take an arbitrary $H\in \mathcal{K}(X)\setminus\{K\}.$ If
$H\setminus K\neq \emptyset$, pick $y\in H\setminus K$. By Claim,
there exists $n_0\in\mathbb{N}$ such that $y\notin \bigcup_{x\in
K}g(n_0, x)$, then $\{y\}\notin G(n_0, K)$, thus $H\notin
\bigcap_{n\in \omega}G(n, K)$. If $H\setminus K=\emptyset$, pick
$y\in K\setminus H$. Hence, for each $x\in K$, there exists $n(x)\in
\omega$ such that $g(n(x), x)$ can not meet both $H$ and $\{y\}$.
From the compactness of $K$, there exists $m\in\mathbb{N}$ such that
$K\subset \bigcup_{i\leq m}g(n(x_i), x_i)$. Let $k=\max\{n(x_i):
i\leq m\}$. Then $K\subset \bigcup_{x\in K}g(k, x)$. Since
$\{x_{i}(k, K): i\leq k(K)\}\subset K\subset \bigcup_{x\in K}g(k,
x)$ and the assumption of each $g(k, x)$, each $g(k, x_i(k, K))
(i\leq k(K))$ can not meet both $H$ and $\{y\}$, that is, $H\notin
G(k, K)$. Therefore, $H\notin \bigcap_{n\in \omega}\{G(n, K):
n\in\omega\}$.

\smallskip
(b) For each $n\in\omega$ and $H\in G(n, K))$, then $G(n, H)\subset
G(n, K)$.

\smallskip
 Take an any $H\in G(n, K)=\langle g(n, x_1(n, K)), ..., g(n, x_{n(K)}(n, K))\rangle$. Obviously, $$H\in G(n, H)=\langle g(n, x_1(n, H)), ..., g(n,
 x_{n(H)}(n, H))\rangle.$$
 For any $y\in H$, we have $y\in g(n, x_i(n, K))$ for some $i\leq n(K)$, then $g(n, y)\subset g(n,
 x_i(n, K)$. Since $\{g(n, x_1(n)), ..., g(n,
 x_{m_n}(n))\}$ is a minimal cover of $K$, for each $g(n, x_i(n, K)$
 there is a $g(n, x_j(n, H))$ such that $g(n, y_j(n, H))\subset g(n,
 x_i(n, K))$. By Lemma ~\ref{l1}, $$\langle g(n, x_1(n, H)), ..., g(n,
 y_{n(H)}(n, H))\rangle \subset \langle g(n, x_1(n, K)), ..., g(n,
 x_{n(K)}(n, K))\rangle,$$ thus $G(n, H)\subset G(n, K)$.
\end{proof}

\maketitle
\section{Generalized metric properties on $\mathcal{F}(X)$}
In this section, we mainly discuss the generalized metric properties on $\mathcal{F}(X)$, and study the relations of that the topological property $\mathcal{P}$ in $X$ and $\mathcal{F}(X)$. First, we recall some concepts.

A space $X$ is said to have a {\it base of countable order} (resp., {\it $W_{\delta}$-diagonal}) if there is a sequence
$\{\mathcal{B}_n\}$ of bases for $X$ such that: Whenever $x\in
b_n\in\mathcal{B}_n$ and $\{b_n\}$ is decreasing (by set inclusion),
then $\{b_n: n\in \omega\}$ is a base at $x$ (resp., $\{x\}=\bigcap\{b_n: n\in \omega\}$). We use `BCO' to abbreviate `base of countable order'.

The following theorem shows that if $X$ has a BCO then $\mathcal{F}(X)$ also has.

\begin{theorem}\label{bco}
A space $X$ has a BCO if and only if $\mathcal{F}(X)$ has a BCO.
\end{theorem}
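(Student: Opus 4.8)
The forward direction ($X$ has a BCO $\Rightarrow \mathcal{F}(X)$ has a BCO) is the substantive half; the converse follows because $X$ embeds as a closed subspace of $\mathcal{F}(X)$ (namely $X\cong\mathcal{F}_1(X)$), and a BCO is hereditary to closed subspaces (in fact to all subspaces). So the plan is to concentrate on the forward implication. Let $\{\mathcal{B}_n\}$ be a sequence of bases for $X$ witnessing the BCO; by passing to $\mathcal{B}_1\wedge\cdots\wedge\mathcal{B}_n$ we may assume the sequence is decreasing, so that a ``$\mathcal{B}_n$-decreasing'' sequence $b_1\supseteq b_2\supseteq\cdots$ with $b_n\in\mathcal{B}_n$ and $x\in\bigcap_n b_n$ forms a neighborhood base at $x$. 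For each $n$, I would let $\mathcal{C}_n$ be the collection of all sets of the form $\langle B_1,\dots,B_r\rangle\cap\mathcal{F}(X)$ with $r\le n$ and each $B_i\in\mathcal{B}_n$. Since the sets $\langle U_1,\dots,U_r\rangle$ with the $U_i$ ranging over a base form a base for the Vietoris topology, and since every finite set has cardinality bounded by some $n$, each $\mathcal{C}_n$ is a base for $\mathcal{F}(X)$; and the sequence $\{\mathcal{C}_n\}$ is decreasing because $\{\mathcal{B}_n\}$ is.

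\textbf{Key steps.} The heart of the argument is verifying the defining property of a BCO for $\{\mathcal{C}_n\}$. Suppose $A=\{a_1,\dots,a_k\}\in\mathcal{F}(X)$ and we are given a decreasing sequence $W_1\supseteq W_2\supseteq\cdots$ with $A\in W_n\in\mathcal{C}_n$; we must show $\{W_n\}$ is a neighborhood base at $A$ in $\mathcal{F}(X)$. Write $W_n=\langle B^n_1,\dots,B^n_{r_n}\rangle\cap\mathcal{F}(X)$ with $B^n_i\in\mathcal{B}_n$ and $r_n\le n$. First I would argue that the number of ``essential'' sets stabilizes: since $A\in W_n$, each $a_j$ lies in some $B^n_{i}$ and each $B^n_i$ meets $A$, so $r_n\le k$ for all $n$; moreover, by Lemma~\ref{l1} the inclusion $W_{n+1}\subseteq W_n$ forces, for each $B^n_i$, the existence of some $B^{n+1}_{i'}\subseteq B^n_i$. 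Using a pigeonhole/combinatorial argument on the finitely many slots, one can arrange (after passing to a cofinal subsequence of indices, which is harmless for the BCO property) that there is a fixed $r\le k$, a partition of the indices, and for each ``track'' $t\le r$ a genuinely decreasing chain $B^{n}_{t}\in\mathcal{B}_n$ with a well-defined point $x_t\in\bigcap_n B^n_t$; moreover every $a_j$ equals some $x_t$ and every $x_t$ is some $a_j$, so $\{x_1,\dots,x_r\}=A$. By the BCO hypothesis for $X$, each chain $\{B^n_t\}_n$ is a neighborhood base at $x_t$. Then, given any Vietoris-basic neighborhood $\langle U_1,\dots,U_m\rangle$ of $A$ in $\mathcal{F}(X)$, for each $t$ choose $n_t$ with $B^{n_t}_t\subseteq\bigcap\{U_i: x_t\in U_i\}$, take $N=\max_t n_t$, and check via Lemma~\ref{l1} that $W_N\subseteq\langle U_1,\dots,U_m\rangle$: the union of the $B^N_t$ is covered by the $U_i$ (each $B^N_t$ sits inside the $U_i$ containing $x_t$), and each $U_i$ contains some $a_j=x_t$, hence contains $B^N_t\supseteq$ some basic piece meeting it.

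\textbf{Main obstacle.} The delicate point is the bookkeeping in the previous paragraph: along the sequence $\{W_n\}$ the indexing sets $\{B^n_i\}_i$ may be permuted, may collapse (two different $i$'s producing the same or nested sets), or may shrink toward different points of $A$ at different ``rates,'' and one must extract, uniformly, $|A|$-many honest decreasing chains each converging (in the neighborhood-base sense) to a distinct point of $A$. I expect this to require a careful finite-combinatorial stabilization argument — choosing, for each $n$, a function $\phi_n$ from the slots of $W_{n+1}$ to the slots of $W_n$ recording which $B^{n+1}_{i'}$ sits inside which $B^n_i$ (Lemma~\ref{l1} guarantees such $\phi_n$ exists), and then using that there are only finitely many slots to find an infinite set of $n$'s on which the inverse-limit structure of the $\phi_n$'s is constant. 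One must also handle the mild subtlety that passing to a subsequence $\{W_{n_j}\}$ is legitimate: if a cofinal subsequence of a decreasing chain of basic sets is a neighborhood base, so is the whole chain. Once the chains are isolated, the final verification that $W_N$ refines an arbitrary Vietoris-basic neighborhood is a routine application of Lemma~\ref{l1}, parallel to the computations already carried out in the proofs of Lemma~\ref{l3} and the $\alpha$-space theorem.
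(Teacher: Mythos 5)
Your overall strategy coincides with the paper's (build Vietoris basic sets from the $n$-th base, analyze a decreasing chain via Lemma~\ref{l1}, stabilize the number of slots, pin each slot-chain to one point of $A$ using BCO plus Hausdorffness), but there are two concrete problems, the second of which is a genuine gap. First, the restriction $r\le n$ in your $\mathcal{C}_n$ already destroys the claim that each $\mathcal{C}_n$ is a base of $\mathcal{F}(X)$: if $|A|=k>n$ and $U_1,\dots,U_k$ are pairwise disjoint neighborhoods of the points of $A$, then every member of $\langle U_1,\dots,U_k\rangle$ has at least $k$ points, whereas any $\langle B_1,\dots,B_r\rangle\cap\mathcal{F}(X)$ with $r\le n<k$ that contains $A$ also contains an $r$-point transversal $\{b_1,\dots,b_r\}$ with $b_i\in B_i$; so no member of $\mathcal{C}_n$ fits between $A$ and $\langle U_1,\dots,U_k\rangle$. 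Your justification (``every finite set has cardinality bounded by some $n$'') confuses the union $\bigcup_n\mathcal{C}_n$ with the individual $\mathcal{C}_n$'s; this is fixable by allowing arbitrary finite $r$.

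The real gap is that you do not require the slots $B_1,\dots,B_r$ to be pairwise disjoint, and that is exactly the device the paper's proof runs on (it is available because all spaces are assumed regular $T_2$, so the points of a finite set can be separated by disjoint basic sets). Without disjointness your assertion $r_n\le k$ is simply false: arbitrarily many overlapping slots can each meet $A$ in the same point, so the ``finitely many slots'' pigeonhole has no footing. Worse, Lemma~\ref{l1} only says that each level-$n$ slot contains \emph{some} level-$(n+1)$ slot; it does not place each level-$(n+1)$ slot inside a level-$n$ slot, so the slots of a decreasing chain $\{W_n\}$ need not organize into decreasing ``tracks'' at all: at every level new slots may appear that are covered by the union of the previous slots but by no single one of them, and such slots never enter any decreasing chain of members of the $\mathcal{B}_n$'s, so the BCO hypothesis on $X$ cannot be applied to them. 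Consequently the ``finite-combinatorial stabilization'' you defer as bookkeeping is precisely where the proof lives, and as described it does not go through; note also that your final step controls only the chosen track representatives $B^N_t$, while $W_N\subseteq\langle U_1,\dots,U_m\rangle$ requires control of \emph{all} slots of $W_N$. With the paper's disjointness normalization everything you want becomes automatic: each slot meets $A$ in exactly one point, so $r_n\le|A|$; disjointness makes $r_n$ nondecreasing, hence eventually constant; the constant must be $|A|$ (your Hausdorff-plus-BCO argument, which matches the paper's); and then the unique slot containing a given point of $A$ at level $n+1$ lies inside the unique slot containing it at level $n$, producing honest decreasing chains and making the last application of Lemma~\ref{l1} legitimate. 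So either adopt the disjointness requirement or supply a complete proof of your stabilization claim; as written, the argument has a genuine hole.
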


\begin{proof}
Since $X$ is a subspace of $\mathcal{F}(X)$, it suffices to prove
the necessity. Assume that $X$ have a BCO, then there is a squence
$\{\mathcal{B}_n\}$ of bases in $X$ satisfying the conditions of definition of BCO above.

For each $n\in\mathbb{N}$, let $$\mathcal{P}_n=\{\langle B_1, ...,
B_{m}\rangle: m\in \mathbb{N}, B_i\cap B_j=\emptyset (i\neq
j),B_i, B_{j}\in \mathcal{B}_n, i, j\leq m\}.$$ Then $\mathcal{P}_n$ is a
base of $\mathcal{F}(X)$.
Indeed, for any $\{x_1,..., x_r\}\in \widehat{U}$ with $\widehat{U}$ open in
$\mathcal{F}(X)$, let $\langle V_1,..., V_r\rangle\subset \widehat{U}$
with $x_i\in V_i, i\leq r$, $V_i\cap V_j=\emptyset$ if $i\neq j$.
Since $\mathcal{B}_n$ is a base of $X$, for each $i\leq r$, there
exists $P_i\in\mathcal{B}_n$ such that $x_i\in P_i\subset V_i$, then
$\langle P_1,..., P_r\rangle\in \mathcal{P}_n$ and $$\{x_1,...,
x_r\}\in \langle P_1,..., P_r\rangle\subset \widehat{U}.$$

Let $\{x_1,..., x_r\}\in \widehat{b_n}\in \mathcal{P}_n$, where
$\widehat{b_n}=\langle P_1(n),..., P_{r_n}(n)\rangle$ and
$\{\widehat{b_n}\}$ is decreasing. Then it is easy to see that $r_n\leq
r$. Now we prove $\{\widehat{b_n}\}$ is a base at $\{x_1,..., x_r\}$ in
$\mathcal{F}(X)$.

Since $\{\widehat{b_n}\}$ is decreasing, it follows from
Lemma~\ref{l1} and the construction of each $\mathcal{P}_n$ that the
sequence $\{r_{n}\}_{n\in\mathbb{N}}$ is increasing. Hence there
exists $k\in \mathbb{N}$ such that $r_n=r_k$ whenever $n\geq k$. We
claim that $r_k=r$.

Suppose not, then $r_n<r$ for all $n\in \mathbb{N}$. From the
construction of each $\mathcal{P}_n$, there are a decreasing
sequence $\{P_{i_n}(n): n\in \omega\}$ and two distinct points $x_p,
x_q\in \{x_1,..., x_r\}$ such that $\{x_p, x_q\}\subset P_{i_n}(n)$
for each $n\in\mathbb{N}$, where each $i_{n}\leq r_{n}$. Since $X$
has a BCO, then $\{P_{i_n}(n): n\in \omega\}$ is a base at $x_p$ and
$x_q$. This is impossible since $X$ is Hausdorff.

Now all $r_n=r$ for any $n\geq k$, hence $\widehat{b_n}=\langle
P_1(n),..., P_{r_n}(n)\rangle = \langle P_1(n),..., P_r(n)\rangle$.
For any $\{x_1,..., x_r\}\in \widehat{U}$ with $\widehat{U}$ open in
$\mathcal{F}(X)$, we find open neighborhoods $V_i$ of $x_i$ in $X$ for each
$i\leq r$ such that $V_i\cap V_j=\emptyset$ if $i\neq j$ and
$\langle V_1,..., V_r\rangle\subset \widehat{U}$. Fix an arbitrary
$i\leq r$. For any $n\geq k$, pick $P_{j_i}(n)\in \{P_1(n),...,
P_r(n)\}$ such that $x_i\in P_{j_i}(n)$. Then $\{P_{j_i}(n): n\geq
k\}$ is a base at $x_i$, hence there exists $m_i\in\mathbb{N}$ such
that $P_{j_i}(n)\subset V_i$ whenever $n\geq m_i$. Let $m=\max\{m_i:
i\leq r\}$. Then $\{x_1,..., x_r\}\in b_n=\langle P_1(n),...,
P_r(n)\rangle\subset \langle V_1, ..., V_r\rangle\subset\widehat{U}$
whenever $n\geq m$. Therefore, $\{\widehat{b_n}\}$ is a base at
$\{x_1,..., x_r\}$ in $\mathcal{F}(X)$.
\end{proof}

By a similar proof, we have the following theorem.

\begin{theorem}\label{Wd}
A space $X$ has a $W_{\delta}$-diagonal if and only if $\mathcal{F}(X)$ has a $W_{\delta}$-diagonal.
\end{theorem}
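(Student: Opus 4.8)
The plan is to transcribe the proof of Theorem~\ref{bco} almost verbatim, everywhere replacing the conclusion ``$\{b_n\}$ is a base at the point'' by the weaker conclusion ``the point is the whole intersection $\bigcap_n b_n$''. As in Theorem~\ref{bco}, since $X$ is a subspace of $\mathcal{F}(X)$ it suffices to prove the necessity. So assume $\{\mathcal{B}_n\}$ is a sequence of bases of $X$ witnessing its $W_{\delta}$-diagonal, and for each $n$ put
$$\mathcal{P}_n=\{\langle B_1,\ldots,B_m\rangle: m\in\mathbb{N},\ B_i\in\mathcal{B}_n,\ B_i\cap B_j=\emptyset\ (i\neq j)\},$$
the same family of subsets of $\mathcal{F}(X)$ used in the proof of Theorem~\ref{bco}; that proof shows verbatim that each $\mathcal{P}_n$ is a base of $\mathcal{F}(X)$.

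Now suppose $\{x_1,\ldots,x_r\}\in\widehat{b_n}\in\mathcal{P}_n$ for every $n$ and $\{\widehat{b_n}\}$ is decreasing, where $\widehat{b_n}=\langle P_1(n),\ldots,P_{r_n}(n)\rangle$. Exactly as in Theorem~\ref{bco}, Lemma~\ref{l1} together with the disjointness built into each $\mathcal{P}_n$ forces $\{r_n\}$ to be nondecreasing and bounded by $r$, so it is eventually a constant $r_k\le r$. The equality $r_k=r$ is proved as there, with the only change at the very end: if $r_k<r$, then for every large $n$ the $r_n$ pairwise disjoint sets $P_i(n)$ cover $\{x_1,\ldots,x_r\}$ with fewer than $r$ sets, so one of them contains two of the $x_j$'s; an infinite pigeonhole (König-type) selection along the refinement maps supplied by Lemma~\ref{l1} (for each $P_i(n)$ there is some $P_j(n+1)\subseteq P_i(n)$) then produces two fixed distinct points $x_p,x_q$ and a decreasing thread $\{P_{i_n}(n)\}$ with $\{x_p,x_q\}\subseteq P_{i_n}(n)$ for all $n$. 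Since each $P_{i_n}(n)\in\mathcal{B}_n$ contains $x_p$ and the thread decreases, the $W_{\delta}$-diagonal of $X$ yields $\bigcap_n P_{i_n}(n)=\{x_p\}$, which is absurd as $x_q$ lies in that intersection. Hence $r_k=r$; for $n\ge k$ let $P_i(n)$ denote the unique member of the defining list of $\widehat{b_n}$ containing $x_i$, so that $P_i(n)\cap\{x_1,\ldots,x_r\}=\{x_i\}$, and Lemma~\ref{l1} then forces $P_i(n+1)\subseteq P_i(n)$ for every $n\ge k$. Thus each $\{P_i(n):n\ge k\}$ is a decreasing thread through $x_i$, and the $W_{\delta}$-diagonal of $X$ gives $\bigcap_{n\ge k}P_i(n)=\{x_i\}$.

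It remains to deduce $\bigcap_n\widehat{b_n}=\{\{x_1,\ldots,x_r\}\}$. If $A\in\bigcap_{n\ge k}\widehat{b_n}$, then $A$ is a finite set, $A\subseteq\bigcup_{i\le r}P_i(n)$, and $A\cap P_i(n)\neq\emptyset$ for all $n\ge k$ and all $i\le r$. Since $A$ is finite and each thread $\{P_i(n)\}$ decreases, for every $i$ some single point of $A$ belongs to $P_i(n)$ for all $n\ge k$, hence to $\bigcap_{n\ge k}P_i(n)=\{x_i\}$, so $x_i\in A$; dually, any point of $A$ lies, for all $n\ge k$, in some $P_i(n)$, hence equals the corresponding $x_i$. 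Therefore $A=\{x_1,\ldots,x_r\}$, and since $\{x_1,\ldots,x_r\}$ itself belongs to every $\widehat{b_n}$, the sequence $\{\mathcal{P}_n\}$ witnesses a $W_{\delta}$-diagonal for $\mathcal{F}(X)$. I expect the only genuinely delicate step to be the König-type extraction of the trapped pair $x_p,x_q$ in the proof that $r_k=r$; everything else is a routine weakening of the proof of Theorem~\ref{bco}, using that each element of $\mathcal{F}(X)$ is finite to pass from ``$A$ meets each $P_i(n)$'' to ``$A=\{x_1,\ldots,x_r\}$''.
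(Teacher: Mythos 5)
Your proof is correct and is essentially the paper's own argument: the paper establishes Theorem~\ref{Wd} simply ``by a similar proof'' to Theorem~\ref{bco}, and your write-up is exactly that adaptation --- the same bases $\mathcal{P}_n$ of $\mathcal{F}(X)$, the same argument that $r_n$ is eventually equal to $r$ (with the $W_{\delta}$-property replacing the BCO-plus-Hausdorff contradiction), and the only genuinely new part, the final computation $\bigcap_n\widehat{b_n}=\{\{x_1,\ldots,x_r\}\}$ via the finiteness of the elements of $\mathcal{F}(X)$, carried out correctly. The K\"onig-type extraction of the trapped pair that you flag as delicate is glossed over in the paper's proof of Theorem~\ref{bco} as well, so your treatment is no less rigorous than the source.
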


From \cite{AJ2000}, we know that each space with a sharp base\footnote{A {\it sharp base} $\mathscr{B}$ of a space $X$ \cite{AJ2000} is a base of $X$ such
that, for every injective sequence $\{B_{n}\}_{n\in\mathbb{N}}\subset\mathscr{B}$ and every $x\in\bigcap_{n\in\mathbb{N}}B_{n}$, the sequence $\{\bigcap_{i\leq n}B_{i}\}_{n\in\mathbb{N}}$ is a base at $x$.} has a BCO. However, we have the following example.

\begin{example}
There exists a space $X$ with a sharp base such that $\mathcal{F}_{2}(X)$ does not have any sharp base, hence $\mathcal{F}(X)$ does not have any sharp base.
\end{example}

\begin{proof}
Indeed, let $Y$ be the space in \cite[Lemma 1]{BG2005}, and let $X$ be the topological sum of $Y$ with the unit interval [0, 1] (that is, $X=Y\oplus [0, 1]$). Clearly, $X$ has a sharp base; however, it follows from
\cite[Lemma 1]{BG2005} that $X\times X$ does not have any sharp base since $Y\times [0, 1]$ does not have any sharp base. Assume that $\mathcal{F}_{2}(X)$ has a sharp base. Then $\mathcal{F}_{2}(X)\setminus \mathcal{F}_{1}(X)$ has a sharp base since a space with a sharp base is hereditary. Put $\bigtriangleup=\{(x, x): x\in X\}$. Then the mapping $$f: (X\times X)\setminus\bigtriangleup\longrightarrow \mathcal{F}_{2}(X)\setminus \mathcal{F}_{1}(X)$$ given by $f(x, y)=\{x, y\}$ is open two-to-one. Then $(X\times X)\setminus\bigtriangleup$ has a sharp base since a space with a sharp base is inverse preserved by open two-to-one mapping \cite{LH2005}. Therefore, $Y\times [0, 1]$ must have a sharp base since it is a subspace of $X\times X\setminus\bigtriangleup$, which is a contradiction.
\end{proof}

The following question is interesting and remains open.

\begin{question}
Let $X$ be a space such that $X^{n}$ has a sharp base for each $n\in\mathbb{N}$. Does $\mathcal{F}(X)$ have a sharp base?
\end{question}

Next we discuss the hyperspaces $\mathcal{F}(X)$ on a Nagata space $X$, a space $X$ with a point-regular base, $c$-semi-stratifiable space $X$ and $\gamma$-space, respectively.

A space $X$ is called {\it Nagata space} if for any $x\in X$, there
are two sequences $\{U_n(x): n\in \mathbb{N}\}$ and
$\{V_n(x): n\in \mathbb{N}\}$ of open neighborhoods of $x$ in $X$ such that
the following conditions hold:

\smallskip
(i)$\{U_n(x): n\in \mathbb{N}\}$ is a local base at $x$ such that
$U_{n+1}(x)\subset U_{n}(x)$ for each $n\in\mathbb{N}$;

\smallskip
(ii) if $y\notin U_n(x)$, then $V_n(x)\cap V_n(y)=\emptyset$.

\begin{theorem}
A space $X$ is Nagata if and only if $\mathcal{F}(X)$ is a Nagata
space.
\end{theorem}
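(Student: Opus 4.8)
Since $X$ embeds as a closed subspace of $\mathcal{F}(X)$ (as the set of singletons) and the Nagata property is hereditary to closed subspaces, the sufficiency is immediate; the work is in the necessity. So assume $X$ is Nagata, with, for each $x\in X$, sequences $\{U_n(x)\}$ and $\{V_n(x)\}$ of open neighborhoods satisfying (i) and (ii). The plan is to build, for each finite set $F=\{x_1,\dots,x_r\}\in\mathcal{F}(X)$, analogous pairs of basic Vietoris neighborhoods out of the data at the points of $F$, then verify the two Nagata axioms. For the local base, I would take, for $n$ large enough that the $U_n(x_i)$ are pairwise disjoint (possible since $X$ is $T_2$ and the $U_n(x_i)$ shrink to the points), the set
$$\widehat{U}_n(F)=\langle U_n(x_1),\dots,U_n(x_r)\rangle,$$
and for the separating sequence take
$$\widehat{V}_n(F)=\langle V_n(x_1),\dots,V_n(x_r), X\rangle$$
(the trailing $X$ is harmless and keeps this an open Vietoris set containing every element $A$ with $A\cap V_n(x_i)\neq\emptyset$ for all $i$, regardless of where the rest of $A$ lies). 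One should pre-process the $V_n$ as in the proof for BCO / $\alpha$-spaces so that the relevant sequences are decreasing, and index everything so $\widehat{U}_{n+1}(F)\subseteq\widehat{U}_n(F)$.

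The first axiom — that $\{\widehat{U}_n(F)\}$ is a decreasing local base at $F$ in $\mathcal{F}(X)$ — is routine: decreasing-ness follows from Lemma~\ref{l1} and $U_{n+1}(x_i)\subset U_n(x_i)$; and given a basic neighborhood $\langle W_1,\dots,W_s\rangle$ of $F$, one may refine it to $\langle W_1',\dots,W_r'\rangle$ with $x_i\in W_i'$ and the $W_i'$ pairwise disjoint, then use condition (i) at each $x_i$ to pick $n_i$ with $U_{n_i}(x_i)\subset W_i'$, and $n=\max n_i$ works by Lemma~\ref{l1}. The second axiom is where the real content lies: given $F=\{x_1,\dots,x_r\}$, $G=\{y_1,\dots,y_s\}\in\mathcal{F}(X)$ with $G\notin\widehat{U}_n(F)$, I must show $\widehat{V}_n(F)\cap\widehat{V}_n(G)=\emptyset$. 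The failure $G\notin\widehat{U}_n(F)=\langle U_n(x_1),\dots,U_n(x_r)\rangle$ splits into two sub-cases: either some $y_j\notin\bigcup_i U_n(x_i)$, or some $U_n(x_i)$ meets no point of $G$. In the first sub-case, $y_j\notin U_n(x_i)$ for every $i$, so by (ii) at each $x_i$ we get $V_n(x_i)\cap V_n(y_j)=\emptyset$; hence any $A\in\widehat{V}_n(G)$ has a point in $V_n(y_j)$, which lies outside every $V_n(x_i)$, so — wait, that alone does not force $A\notin\widehat{V}_n(F)$, since $A$ could still meet all the $V_n(x_i)$ at other points.

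This is the main obstacle, and it tells me the neighborhoods above are not quite right: the separating family must also "see" all of $F$ and all of $G$, i.e. $\widehat{V}_n$ should be a full Vietoris tuple, not one with a slack $X$-coordinate. The fix is to set $\widehat{V}_n(F)=\langle V_n(x_1),\dots,V_n(x_r)\rangle$ (no trailing $X$), shrinking if necessary so these are still a genuine neighborhood of $F$ — note $F\in\langle V_n(x_1),\dots,V_n(x_r)\rangle$ automatically since $x_i\in V_n(x_i)$ and $F\subset\bigcup V_n(x_i)$. Now if $A\in\widehat{V}_n(F)\cap\widehat{V}_n(G)$, then $A\subseteq\bigcup_i V_n(x_i)$ and $A\subseteq\bigcup_j V_n(y_j)$ and $A$ meets each $V_n(x_i)$ and each $V_n(y_j)$. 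In sub-case one, pick $a\in A\cap V_n(y_j)$; then $a\in V_n(x_i)$ for some $i$, so $V_n(x_i)\cap V_n(y_j)\neq\emptyset$, contradicting (ii) applied at $x_i$ to the point $y_j\notin U_n(x_i)$ — here I must be careful that condition (ii) is stated as "$y\notin U_n(x)\Rightarrow V_n(x)\cap V_n(y)=\emptyset$", which is exactly what is needed. In sub-case two, some $U_n(x_{i_0})\cap G=\emptyset$, so for every $y_j$ we have $y_j\notin U_n(x_{i_0})$, whence $V_n(x_{i_0})\cap V_n(y_j)=\emptyset$ for all $j$ by (ii); but $A$ meets $V_n(x_{i_0})$, say at $a$, while $A\subseteq\bigcup_j V_n(y_j)$ puts $a$ in some $V_n(y_j)$ — contradiction again. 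So $\widehat{V}_n(F)\cap\widehat{V}_n(G)=\emptyset$ in both sub-cases, and the two Nagata axioms hold for $\mathcal{F}(X)$. I would close by remarking that monotonicity of the chosen indices across different finite sets is not required by the definition (the data is attached pointwise to each $F$), so no uniformization beyond the per-point preprocessing of the $V_n$'s is needed.
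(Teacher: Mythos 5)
Your corrected construction is exactly the paper's: $U(n,F)=\langle U_n(x_1),\dots,U_n(x_r)\rangle$ and $V(n,F)=\langle V_n(x_1),\dots,V_n(x_r)\rangle$, with the same two sub-cases (some $y_j$ outside every $U_n(x_i)$, or some $U_n(x_i)$ missing $G$) verified by the same intersection argument, so the proposal is correct and follows essentially the same route. Your initial variant with the slack $X$-coordinate was indeed flawed, but you identified and repaired it yourself, landing on the paper's proof.
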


\begin{proof}
Since $X$ is a subspace of $\mathcal{F}(X)$, it suffices to prove
the necessity. Suppose $X$ is a Nagata space. For each $x\in X$, let
$\{U_n(x): n\in \mathbb{N}\}$ and $\{V_n(x): n\in \mathbb{N}\}$ be
the sequences of open neighborhoods of $x$ in $X$ satisfying the
conditions of the definition of Nagata space.

Take any point $\{x_1,..., x_r\}\in \mathcal{F}(X)$. Define
$$U(n, \{x_1,.., x_r\})=\langle U_n(x_1),..., U_n(x_r)\rangle$$ and $$V(n, \{x_1,...,
x_r\})=\langle V_n(x_1),..., V_n(x_r)\rangle.$$ Then, by a similar
proof of Theorem ~\ref{bco}, we could prove $\{U(n, \{x_1,..,
x_r\}): n\in \mathbb{N}\}$ is a local base at $\{x_1,..., x_r\}$ in
$\mathcal{F}(X)$.

Next we prove $\{U(n, \{x_1,.., x_r\}): n\in \mathbb{N}\}$ and
$\{V(n, \{x_1,.., x_r\}): n\in \mathbb{N}\}$ satisfy (ii) of the
definition of Nagata space.

Assume that $\{y_1,..., y_s\}\notin U(n, \{x_1,.., x_r\})$, then we
prove that $$V(n, \{y_1,..., y_s\})\cap V(n, \{x_1,...,
x_r\})=\emptyset.$$ We consider the following two cases:

\smallskip
(a) there exists $y_j\in \{y_1,..., y_s\}$ such that $y_j\notin
U_n(x_i)$ for all $i\leq r$.

\smallskip
Assume that $V(n, \{y_1,..., y_s\})\cap V(n, \{x_1,...,
x_r\})\neq\emptyset$. Take any $$\{z_1, ..., z_t\}\in V(n,
\{y_1,..., y_s\})\cap V(n, \{x_1,..., x_r\}).$$ Then there is $k$
such that $z_k\in V_n(y_j)$; on the other hand, $z_k\in
\bigcup_{i\leq r}V_n(x_i)$, which is a contradiction since
$V_n(y_j)\cap V_n(x_i)=\emptyset$ for all $i\leq r$ by (ii) of the
definition of Nagata space.

\smallskip
(b) there exists $i\leq r$ such that $\{y_1,..., y_s\}\cap
U_n(x_i)=\emptyset$.

\smallskip
By (ii) of the definition of Nagata space, we have $\bigcup_{j\leq
s}V_n(y_j)\cap V_n(x_i)=\emptyset$. Suppose $z\in V(n, \{y_1,...,
y_s\})\cap V(n, \{x_1,..., x_r\})$, then $z\subset \bigcup_{j\leq
s}V_n(y_j)$ and $z\cap V_n(x_i)\neq \emptyset$. This is a
contradiction.

Therefore, $\mathcal{F}(X)$ is a Nagata space.
\end{proof}

A base $\mathcal{B}$ for a pace $X$ is {\it
point-regular} \cite {E1989} if for every point $x\in X$ and any
neighborhood $U$ of $x$ the set of all members of $\mathcal{B}$ that
contain $x$ and meet $X\setminus U$ is finite. A space having
point-regular base is characterized as follows.

\begin{theorem}
\cite[Lemma 5.4.7]{E1989} A space $X$ has a point-regular base if
and only if $X$ has a development consisting of point-finite covers.
\end{theorem}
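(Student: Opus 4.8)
The plan is to verify each implication directly; the substantial direction is from a point-regular base to a development by point-finite covers.

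For the easy direction, suppose $\{\mathcal{A}_i\}$ is a development by point-finite open covers. Passing to $\mathcal{A}_i' = \{A_1\cap\cdots\cap A_i : A_j\in\mathcal{A}_j\}$ we may assume the covers are point-finite and that $\operatorname{St}(x,\mathcal{A}_{i+1}')\subseteq\operatorname{St}(x,\mathcal{A}_i')$ for all $x$, and then $\mathcal{B}:=\bigcup_i\mathcal{A}_i'$ is a base. It is point-regular: given $x$ and a neighbourhood $U$ of $x$, fix $i_0$ with $\operatorname{St}(x,\mathcal{A}_{i_0}')\subseteq U$; for $i\ge i_0$ every member of $\mathcal{A}_i'$ containing $x$ lies in $U$, so any member of $\mathcal{B}$ that contains $x$ and meets $X\setminus U$ must come from the finitely many members of $\mathcal{A}_1'\cup\cdots\cup\mathcal{A}_{i_0-1}'$ through $x$.

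For the converse, let $\mathcal{B}$ be a point-regular base and set $r(B)=|\{B'\in\mathcal{B}:B\subsetneq B'\}|$ for $B\in\mathcal{B}$. Choosing $x\in B$ and applying point-regularity at $x$ to the neighbourhood $B$ shows $r(B)<\omega$. If $B\subsetneq B'$ then $\{B''\in\mathcal{B}:B'\subsetneq B''\}$ is a proper subfamily of $\{B''\in\mathcal{B}:B\subsetneq B''\}$, so $r(B')<r(B)$; hence $r$ strictly decreases along $\subseteq$-increasing chains, and (by induction on $m$, the case $m=0$ being the finitely many $\subseteq$-maximal members through $x$, which is immediate from point-regularity) $\{B\in\mathcal{B}:x\in B,\ r(B)\le m\}$ is finite for every $m$. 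Now put $\mathcal{A}_n=\{B\in\mathcal{B}:r(B)\ge n\}$, enlarged by the singletons $\{x\}$ of the isolated points of $X$ (which, being open, already lie in $\mathcal{B}$). Each $\mathcal{A}_n$ is an open cover: a non-isolated $x$ lies in infinitely many members of $\mathcal{B}$ but in only finitely many of $r$-value $\le n$, hence in one of $r$-value $\ge n$. And $\{\mathcal{A}_n\}$ is a development: if $\operatorname{St}(x,\mathcal{A}_n)\not\subseteq U$ for all $n$ we obtain members $B_n\ni x$ with $r(B_n)\ge n$ and $B_n\not\subseteq U$, hence infinitely many distinct members of $\mathcal{B}$ containing $x$ and meeting $X\setminus U$, contradicting point-regularity. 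Finally let $\mathcal{G}_n$ be the $\subseteq$-maximal members of $\mathcal{A}_n$: every point lies in one, since the members of $\mathcal{A}_n$ that contain a given point and lie above a given such member form a finite family (bounded by $r$ again), so a maximal one exists; $\mathcal{G}_n$ refines $\mathcal{A}_n$, whence $\operatorname{St}(x,\mathcal{G}_n)\subseteq\operatorname{St}(x,\mathcal{A}_n)$ and $\{\mathcal{G}_n\}$ is still a development; and $\mathcal{G}_n$ is point-finite because distinct maximal members are $\subseteq$-incomparable, so if infinitely many of them contained $x$ then all but one would fail to lie in a fixed one of them, impossible by point-regularity at $x$.

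The main difficulty is the converse, and specifically the clash between the two demands on $\mathcal{G}_n$: being a development forces the stars at each point to shrink, hence forces $\mathcal{G}_n$ to contain arbitrarily small neighbourhoods, whereas point-regularity controls $\mathcal{B}$ only from above, so the many small members of $\mathcal{B}$ through a point — precisely those of large $r$ — cannot be bounded in number directly. The way around this is to first extract the coarser development $\{\mathcal{A}_n\}$, for which the condition $r(B)\ge n$ makes both the development property and, via maximal elements, point-finiteness fall out of point-regularity, and only afterwards thin $\mathcal{A}_n$ to the point-finite $\mathcal{G}_n$, using that refining a cover can only shrink its stars.
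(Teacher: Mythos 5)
Your proof is correct in substance, but note first what the paper actually does with this statement: it offers no proof at all, citing it as \cite[Lemma 5.4.7]{E1989} (the classical Arhangel'ski\v{\i}-type characterization). So there is no argument of the authors to compare with line by line; what you have written is a self-contained proof, and it follows what is essentially the classical route: rank each base member $B$ by $r(B)$, the number of its strict supersets in $\mathcal{B}$ (finite by point-regularity applied inside $B$), let the $n$-th cover consist of members of high rank, and use point-regularity once more to get both the development property (the chosen $B_n\ni x$ meeting $X\setminus U$ are eventually distinct since $r(B_n)\ge n$) and, after passing to $\subseteq$-maximal members, point-finiteness (distinct maximal members through $x$ are incomparable, so all but one meet the complement of a fixed one). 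The easy direction via the intersected covers $\mathcal{A}_i'$ and monotone stars is also fine. Two small points deserve attention. First, the one step you state too quickly is the finiteness of $\{B\in\mathcal{B}: x\in B,\ r(B)\le m\}$: you announce an induction on $m$ but give only the case $m=0$, and the natural inductive step (bounding the subsets of the finitely many members of smaller rank) does not go through directly. The fact is nevertheless immediate from a tool you already use twice: if $B\subsetneq B'$ then $r(B')<r(B)$, so the members containing $x$ with a \emph{fixed} value of $r$ are pairwise $\subseteq$-incomparable, and any infinite incomparable family through $x$ violates point-regularity with $U$ equal to one of its members; summing over $r$-values $\le m$ gives the claim, with no induction needed. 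Second, your parenthetical reason that the singletons of isolated points ``being open, already lie in $\mathcal{B}$'' is not the right justification (a base need not contain every open set); the correct one is that a base must contain some $B$ with $x\in B\subseteq\{x\}$, which forces $\{x\}\in\mathcal{B}$. With these two repairs spelled out, the argument is complete and supplies a proof the paper itself leaves to the literature.
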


\begin{theorem}\label{point-regular}
A space $X$ has a point-regular base if and only if $\mathcal{F}(X)$
has a point-regular base.
\end{theorem}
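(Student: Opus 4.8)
The plan is to prove the nontrivial implication, that $\mathcal{F}(X)$ inherits a point-regular base from $X$; the converse is immediate, since $X$ sits in $\mathcal{F}(X)$ as a closed subspace and having a point-regular base is hereditary (restrict a point-regular base, or equivalently restrict a development consisting of point-finite covers, to the subspace). So assume $X$ has a point-regular base. By the development characterization recalled just above, fix a sequence $\{\mathcal{G}_n\}_{n\in\mathbb{N}}$ of point-finite open covers of $X$ that is a development; replacing $\mathcal{G}_n$ by the common refinement of $\mathcal{G}_1,\dots,\mathcal{G}_n$ (still point-finite, and still a development) if necessary, I may assume $\mathcal{G}_{n+1}$ refines $\mathcal{G}_n$, so that $\mathrm{St}(x,\mathcal{G}_{n+1})\subseteq \mathrm{St}(x,\mathcal{G}_n)$ for every $x\in X$.

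For each $n$ set $\mathcal{H}_n=\{\langle G_1,\dots,G_k\rangle:\ k\in\mathbb{N},\ G_1,\dots,G_k\in\mathcal{G}_n\}$, a family of basic Vietoris-open subsets of $\mathcal{F}(X)$. I claim $\{\mathcal{H}_n\}$ is a development of $\mathcal{F}(X)$ consisting of point-finite covers, which by the same characterization will give a point-regular base on $\mathcal{F}(X)$. First, $\mathcal{H}_n$ covers $\mathcal{F}(X)$: given $\{x_1,\dots,x_r\}$, choose $G_i\in\mathcal{G}_n$ with $x_i\in G_i$, so $\{x_1,\dots,x_r\}\in\langle G_1,\dots,G_r\rangle$. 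Next, $\mathcal{H}_n$ is point-finite: if $\{y_1,\dots,y_s\}\in\langle G_1,\dots,G_k\rangle\in\mathcal{H}_n$, then each $G_j$ meets $\{y_1,\dots,y_s\}$, so $\{G_1,\dots,G_k\}$ is a subfamily of $\bigcup_{l\le s}\{G\in\mathcal{G}_n:y_l\in G\}$, which is finite by point-finiteness of $\mathcal{G}_n$; since $\langle G_1,\dots,G_k\rangle$, as a set, depends only on the set $\{G_1,\dots,G_k\}$, only finitely many members of $\mathcal{H}_n$ contain $\{y_1,\dots,y_s\}$.

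For the development property, let $\{x_1,\dots,x_r\}\in\widehat{U}$ with $\widehat{U}$ open in $\mathcal{F}(X)$. Choose open $V_i\ni x_i$ in $X$ with $\langle V_1,\dots,V_r\rangle\subseteq\widehat{U}$, and, using the monotonicity arranged above, a single $n$ with $\mathrm{St}(x_i,\mathcal{G}_n)\subseteq V_i$ for all $i\le r$. Suppose $\{x_1,\dots,x_r\}\in\langle G_1,\dots,G_k\rangle\in\mathcal{H}_n$. Then each $G_j$ contains some $x_i$, hence $G_j\subseteq \mathrm{St}(x_i,\mathcal{G}_n)\subseteq V_i$, so $\bigcup_j G_j\subseteq\bigcup_i V_i$; and for each $i$ there is a $j$ with $x_i\in G_j$, for which $G_j\subseteq V_i$. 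By Lemma~\ref{l1}, $\langle G_1,\dots,G_k\rangle\subseteq\langle V_1,\dots,V_r\rangle\subseteq\widehat{U}$; as this holds for every such member, $\mathrm{St}(\{x_1,\dots,x_r\},\mathcal{H}_n)\subseteq\widehat{U}$.

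The only step that needs care is the simultaneous choice of $n$ in the last paragraph, which is exactly why I pass at the outset to a development in which each cover refines the previous one; everything else is the usual bookkeeping with Lemma~\ref{l1} and the definition of the Vietoris topology. This is the same scheme as in the proof of Theorem~\ref{bco}, except that the pairwise-disjointness clause imposed there is not needed here: point-finiteness of the $\mathcal{G}_n$ already forces each $\mathcal{H}_n$ to be point-finite.
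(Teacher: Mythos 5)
Your proof is correct and follows essentially the same route as the paper: pass to a development of point-finite covers, take intersections to get star-monotonicity, lift each cover $\mathcal{G}_n$ to the Vietoris basic sets $\langle G_1,\dots,G_k\rangle$, and check point-finiteness and the development property via Lemma~\ref{l1}. Your final step, verifying explicitly that every member of $\mathcal{H}_n$ containing $\{x_1,\dots,x_r\}$ lands inside $\langle V_1,\dots,V_r\rangle$, is in fact a slightly more complete rendering of the star estimate that the paper leaves implicit.
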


\begin{proof}
Since $X$ is a subspace of $\mathcal{F}(X)$, it suffices to prove
the necessity. Suppose $X$ has a point-regular base, by Thereom
~\ref{point-regular}, let $\{\mathcal{U}_n: n\in \mathbb{N}\}$ be a
development consisting of point-finite covers, and let
$$\mathcal{G}_n=\{\bigcap_{i\leq n}U_i: U_i\in\mathcal{U}_i, i\leq
n\}.$$

It is easy to see that $\{\mathcal{G}_n: n\in \mathbb{N}\}$ is a development consisting
of point-finite cover of $X$ with $st(x, \mathcal{G}_{m+1})\subset
st(x, \mathcal{G}_m)$ for all $x\in X$.

For each $n\in\mathbb{N}$, put $$\mathcal{W}_n=\{\langle B_1(n),...,
B_{m}(n)\rangle: B_i(n)\in \mathcal{G}_n, i\leq m,
m\in\mathbb{N}\}.$$ Obviously, each $\mathcal{W}_n$ is an open cover
of $\mathcal{F}(X)$. We claim that $\{\mathcal{W}_n: n\in
\mathbb{N}\}$ is a development consisting of point-finite covers of
$\mathcal{F}(X)$.

\smallskip
(1) For each $n\in \mathbb{N}$, $\mathcal{W}_n$ is point-finite.

\smallskip
In fact, take any point $\{x_1,..., x_r\}\in \mathcal{F}(X)$. If
$\{x_1,..., x_r\}\in \langle B_1(n),..., B_{m}(n)\rangle$, then for
each $i\leq m$, $B_i(n)$ contains some $x_j$. However, since
$\mathcal{G}_n$ is point-finite, we have $$|\{B\in\mathcal{G}_n:
B\cap \{x_1,..., x_r\}\neq \emptyset\}|<\omega.$$ Therefore, there
are only finitely many elements in $\mathcal{W}_n$ intersecting
$\{x_1,..., x_r\}$.

\smallskip
(2) $(\mathcal{W}_n)$ is a development.

\smallskip
Take any $\{x_1,..., x_r\}\in \widehat{U}$ with $\widehat{U}$ open in
$\mathcal{F}(X)$. Then we may find $\{V_i: i\leq r\}$ with each
$V_i$ open in $\mathcal{F}(X)$, $x_i\in V_i$ and $V_i\cap
V_j=\emptyset (i\neq j)$ such that $\langle V_1,...,
V_r\rangle\subset \widehat{U}$. Since $\{\mathcal{G}_n: n\in \mathbb{N}\}$ is a development
of $X$, for each $i$, there is $m_i\in\mathbb{N}$ such that $x_i\in st(x_i,
\mathcal{G}_n)\subset V_i$ whenever $n\geq m_i$. Let $m=\max\{m_i:
i\leq r\}$. Then $$\{x_1,..., x_r\}\in \langle st(x_1,
\mathcal{G}_n), ..., st(x_r, \mathcal{G}_n)\rangle\subset \langle
V_1,..., V_r\rangle$$ whenever $n\geq m$.

Therefore, $\mathcal{F}(X)$ has a development consisting of
point-finite cover, so that $\mathcal{F}(X)$ has a point-regular
base by Theorem~\ref{point-regular}.
\end{proof}

A $c$-semi-stratifiable\footnote{A space $X$ is said to be {\it c-semi-stratifiable}
\cite{M1973} if, for every compact subset $C$ of $X$, there is a
sequence $\{G(n, C)\}$ of open sets of $X$ such that (1)
$C=\bigcap_{n\in \omega}G(N, C)$; (2) $C\subset F \Rightarrow G(n,
C)\subset G(n, F)$.} is characterized as
follow.

\begin{lemma}\label{l5}
\cite{BBL2006} A space is $c$-semi-stratifiable if and only if there
is a $g$-function on $X$ such that (1) $\{x\}=\bigcap_{n\in
\omega}g(n, x), x\in X$; (2) if a sequence $\{x_n\}$ of distinct
points of $X$ converges to some $x\in X$, then $\bigcap_{n\in
\omega}g(n, x_n)\subset \{x\}$.
\end{lemma}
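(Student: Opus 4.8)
The plan is to prove both directions by translating between the compact-set operator $G$ in the definition of $c$-semi-stratifiability and a $g$-function, exploiting that for a convergent sequence of distinct points the sequence together with its limit forms a compact set.

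For the ``only if'' direction, I would start from a $c$-semi-stratifiable structure $\{G(n,C)\}$, arrange $G(n+1,C)\subseteq G(n,C)$ by passing to $\bigcap_{i\le n}G(i,C)$, and set $g(n,x)=G(n,\{x\})$. Condition (1) is then immediate. For (2), given distinct points $x_n\to x$, I would put $C=\{x\}\cup\{x_n:n\in\omega\}$ (which is compact) and argue by contradiction: if $y\in\bigcap_n g(n,x_n)$ with $y\ne x$, then $y\in\bigcap_n G(n,C)=C$ forces $y=x_k$ for some $k$; since $x_k$ is isolated in $C$, the set $C\setminus\{x_k\}$ is still compact, it contains $x_n$ for all $n\ne k$, and monotonicity of $G$ together with $y=x_k\in g(n,x_n)=G(n,\{x_n\})$ then yields $x_k\in\bigcap_n G(n,C\setminus\{x_k\})=C\setminus\{x_k\}$, a contradiction.

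For the ``if'' direction, I would start from a $g$-function satisfying (1) and (2), again reduce to the decreasing case $g(n+1,x)\subseteq g(n,x)$, and define $G(n,C)=\bigcup_{x\in C}g(n,x)$ for compact $C$. Openness of $G(n,C)$ and the monotonicity $C\subseteq F\Rightarrow G(n,C)\subseteq G(n,F)$ are immediate, and $C\subseteq\bigcap_n G(n,C)$ is trivial since $x\in g(n,x)$. The only real work is the reverse inclusion: assuming $y\in\bigcap_n G(n,C)\setminus C$, pick $x_n\in C$ with $y\in g(n,x_n)$; if some point of $C$ occurs as $x_n$ for infinitely many $n$, condition (1) gives $y\in C$ at once, and otherwise I would extract indices $n_1<n_2<\cdots$ with $n_k\ge k$ and the points $z_k:=x_{n_k}$ pairwise distinct, use compactness of $C$ to pass to a subsequence $z_{k_j}\to x^{\ast}\in C$, and invoke (2) (together with the decreasing property, which turns $y\in g(n_{k_j},z_{k_j})$ into $y\in g(j,z_{k_j})$) to conclude $y\in\{x^{\ast}\}\subseteq C$, a contradiction.

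The routine parts are the reductions to decreasing families and the openness/monotonicity checks. The one genuinely delicate step is the reverse inclusion in the ``if'' direction, where a spurious limit point must be eliminated by producing a convergent subsequence of \emph{distinct} witnesses in $C$ --- precisely the configuration to which hypothesis (2) applies, and the place where the compactness of $C$ is indispensable.
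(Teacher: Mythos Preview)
The paper does not give its own proof of this lemma; it is simply quoted from \cite{BBL2006} and used as a black box. So there is no ``paper's proof'' to compare against, and your proposal stands on its own merits.

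Your argument is correct in both directions, with one point that deserves to be made explicit. In the ``if'' direction you write ``use compactness of $C$ to pass to a subsequence $z_{k_j}\to x^{\ast}\in C$'', but compactness by itself does not yield convergent subsequences in an arbitrary (regular $T_2$) space. What rescues this step is already built into hypothesis~(1): since $\{x\}=\bigcap_{n}g(n,x)$ with each $g(n,x)$ open, every point of $X$ is a $G_\delta$, so the compact Hausdorff subspace $C$ has countable pseudocharacter and is therefore first countable (pseudocharacter equals character in compact Hausdorff spaces). Hence $C$ is sequentially compact, and the extraction of a convergent subsequence of distinct points is legitimate. Once you insert this remark, the delicate reverse inclusion $\bigcap_n G(n,C)\subseteq C$ goes through exactly as you outlined.

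The ``only if'' direction is fine as written; the passage from $\bigcap_{n\ne k}G(n,C\setminus\{x_k\})$ to $\bigcap_{n}G(n,C\setminus\{x_k\})$ is justified by the decreasing arrangement you made at the outset, and $x_k$ is indeed isolated in $C$ because the $x_n$ are pairwise distinct and $x_k\ne x$.
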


\begin{theorem}
A space $X$ is $c$-semi-stratifiable if and only if $\mathcal{F}(X)$
is a $c$-semi-stratifiable space.
\end{theorem}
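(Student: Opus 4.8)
The plan is to use the $g$-function characterization of $c$-semi-stratifiability given in Lemma~\ref{l5}, mirroring the structure of the proof of the $\alpha$-space theorem above. Since $X$ embeds as a closed subspace of $\mathcal{F}(X)$, only the necessity requires work. So assume $X$ is $c$-semi-stratifiable and fix a $g$-function $g:\omega\times X\to\tau_X$ with $\{x\}=\bigcap_{n}g(n,x)$ and with the property that whenever a sequence of distinct points $x_n\to x$ then $\bigcap_n g(n,x_n)\subset\{x\}$; by shrinking we may also assume $g(n+1,x)\subset g(n,x)$ for all $n,x$ (this is standard and can be cited as in the $\alpha$-space proof). For $A=\{x_1,\dots,x_r\}\in\mathcal{F}(X)$ I would define
\[
G(n,A)=\langle g(n,x_1),\dots,g(n,x_r)\rangle .
\]

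First I would verify condition (1), i.e. $\bigcap_{n\in\omega}G(n,A)=\{A\}$. The inclusion $\supseteq$ is clear. For $\subseteq$, take $B\in\bigcap_n G(n,A)$ with $B\neq A$. Either there is $y\in B\setminus\overline{\{x_1,\dots,x_r\}}$ — but actually since $\{x_i\}=\bigcap_n g(n,x_i)$, the set $\bigcup_{i}g(n,x_i)$ shrinks to $A$ in the sense that any $y\notin A$ escapes some $\bigcup_i g(n,x_i)$ (each $y\notin A$ lies outside $g(n_i,x_i)$ for suitable $n_i$, take $n=\max_i n_i$), so $B\subset\bigcup_i g(n,x_i)$ for all $n$ forces $B\subset A$; and if $B\subsetneq A$, say $x_j\notin B$, then for each $y\in B$ we have $y\neq x_j$ so $y\notin g(n_y,x_j)$ for some $n_y$, and since $B$ is finite, for large $n$ we get $g(n,x_j)\cap B=\emptyset$, so $B\notin G(n,A)$ because $G(n,A)$ requires meeting $g(n,x_j)$. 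This handles (1).

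The main obstacle is condition (2): if $\{A_k\}$ is a sequence of \emph{distinct} points of $\mathcal{F}(X)$ converging to $A$, show $\bigcap_k G(k,A_k)\subset\{A\}$. Write $A=\{x_1,\dots,x_r\}$ and $A_k=\{x_1^k,\dots,x_{r_k}^k\}$. The subtlety is bookkeeping: convergence $A_k\to A$ in the Vietoris topology means each $A_k\subset\bigcup_i V_i$ eventually (for any open $V_i\ni x_i$ with disjoint $V_i$'s) and each $V_i$ meets $A_k$ eventually, so for large $k$ each point of $A_k$ is near some $x_i$ and each $x_i$ is approximated by points of $A_k$. I would partition (for large $k$) the points of $A_k$ according to which $x_i$ they cluster at, obtaining for each $i\leq r$ a sequence $x^{k}_{\pi(i,k)}\to x_i$ as $k\to\infty$; passing to a subsequence I may assume for each $i$ this sequence is either eventually constant equal to $x_i$ or consists of distinct points $\neq x_i$. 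Now take $B\in\bigcap_k G(k,A_k)$. For each $y\in B$ and each $k$, $y\in g(k,x^k_{j(y,k)})$ for some index $j(y,k)$, and $x^k_{j(y,k)}$ lies near some $x_{i(y,k)}$; a pigeonhole/subsequence argument pins $i(y,k)$ to a fixed $i=i(y)$ and then $y\in\bigcap_k g(k, x^k_{\pi(i,k)})\subset\{x_i\}$ by Lemma~\ref{l5}(2) (in the distinct-points case) or by Lemma~\ref{l5}(1) (in the eventually-constant case, where $\bigcap_k g(k,x_i)=\{x_i\}$). Hence $B\subset A$. Conversely each $x_i$ must lie in $B$: the sets $g(k,x^k_{\pi(i,k)})$ all meet $B$ (since $B\in G(k,A_k)$ and this generator is one of the coordinates, using that $\{g(k,x^k_j)\}$ covers — here I should arrange the cover to be, or contain, a minimal cover so that each listed $g(k,x^k_j)$ is genuinely required; alternatively list \emph{all} of $g(k,x^k_1),\dots,g(k,x^k_{r_k})$ in $G(k,A_k)$, which is harmless), so $B$ meets $g(k,x^k_{\pi(i,k)})$ for all $k$; picking witnesses $b_k\in B\cap g(k,x^k_{\pi(i,k)})$ and using finiteness of $B$, some $b\in B$ lies in infinitely many $g(k,x^k_{\pi(i,k)})$, and since these shrink to $\{x_i\}$ we get $b=x_i$, so $x_i\in B$. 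Therefore $B=A$, establishing (2).

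A few routine points remain that I would dispatch quickly: that $\{G(n,A):n\in\omega\}$ is a genuine decreasing sequence of open neighborhoods of $A$ (monotonicity follows from $g(n+1,x)\subset g(n,x)$ together with Lemma~\ref{l1}), and that the construction is independent of the order in which the points of $A$ are enumerated. I also want to double-check the definition of $c$-semi-stratifiability only constrains compact sets, but since $\mathcal{F}(X)$ is our target space and finite sets are the points, Lemma~\ref{l5} already reduces everything to a $g$-function condition on points, so no extra compactness argument inside $\mathcal{F}(X)$ is needed — that is precisely why working with Lemma~\ref{l5} rather than the original definition is the right move. The expected hard part, as indicated, is the careful subsequence/pigeonhole extraction in condition (2) matching up points of the $A_k$ with points of $A$.
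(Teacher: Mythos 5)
Your proposal is correct and follows essentially the paper's own route: it uses the $g$-function characterization of Lemma~\ref{l5}, defines $G(n,\{x_1,\dots,x_r\})=\langle g(n,x_1),\dots,g(n,x_r)\rangle$, and verifies conditions (1) and (2) by the same cluster-matching subsequence/pigeonhole argument (including the key observation that points of $A_k$ lying in the disjoint neighborhoods $V_i$ converge to the corresponding $x_i$). The only cosmetic difference is that you prove $A\subset B$ directly, by noting each coordinate $g(k,x^k_{\pi(i,k)})$ meets $B$ and extracting a repeated witness equal to $x_i$, whereas the paper reaches the same conclusion by deriving a contradiction from $|B|<|A|$ via a point of $B$ shared by two clusters; both rest on exactly the same facts.
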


\begin{proof}
Suppose that $(X, \tau)$ is a $c$-semi-stratifiable space, and let
$g: \omega\times X\to \tau_X$ be the $g$-function in Lemma
~\ref{l5}. Define $G: \omega\times \mathcal{F}(X)\to
\tau_{\mathcal{F}(X)}$ as follow: $$G(n, \{x_1,..., x_r\})=\langle
g(n, x_1),..., g(n, x_r)\rangle.$$  We claim that $G$ satisfies
the conditions of Lemma~\ref{l5} as follows.

\smallskip
(1) $\bigcap_{n\in \omega}G(n,\{x_1,..., x_r\})=\{x_1,..., x_r\}$.

\smallskip
Take any $\{y_1,..., y_s\}\neq \{x_1,..., x_r\}$. Then $$\{y_1,...,
y_s\}\setminus \{x_1,..., x_r\}\neq\emptyset\ \mbox{or}\ \{y_1,...,
y_s\}\setminus\{x_1,..., x_r\}=\emptyset.$$ If $\{y_1,...,
y_s\}\setminus \{x_1,..., x_r\}\neq\emptyset$, then take any
$y_{i}\in\{y_1,..., y_s\}\setminus \{x_1,..., x_r\}$. Then, for
sufficiently large $n\in\mathbb{N}$, we have
$y_{i}\not\in\bigcup_{j=1}^{r}g(n, x_j)$, which shows that
$$\{y_1,..., y_s\}\not\in G(n,\{x_1,..., x_r\}).$$ If $\{y_1,...,
y_s\}\setminus\{x_1,..., x_r\}=\emptyset$, then there exists $j\leq
r$ such that $x_{j}\not\in\{y_1,..., y_s\}$. Then, for sufficiently
large $m\in\mathbb{N}$, we have $\{y_1,..., y_s\}\cap g(n,
x_j)=\emptyset$, which shows that $\{y_1,..., y_s\}\not\in
G(n,\{x_1,..., x_r\})$. Therefore, $\bigcap_{n\in
\omega}G(n,\{x_1,..., x_r\})=\{x_1,..., x_r\}$.

\smallskip
(2) If a sequence $\{Y_n\}$ of distinct points of $\mathcal{F}(X)$
converges to some $\{x_1,..., x_r\}\in \mathcal{F}(X)$, then
$\bigcap_{n\in \omega}G(n, Y_n)\subset \{\{x_1,..., x_r\}\}$.

\smallskip
We write $Y_n=\{y_1(n), ..., y_{m_n}(n)\}$ for each
$n\in\mathbb{N}$, where $m_n\in \mathbb{N}$.

Let $\{V_i: i\leq r\}$ be the family of open subsets of $X$ with $x_i\in V_i$,
$V_i\cap V_j=\emptyset$ for any $i, j\leq r$. Obviously, $\langle
V_1\cap g(n, x_1),..., V_r\cap g(n, x_r)\rangle$ is an open
neighborhood of $\{x_1,..., x_r\}$, since $Y_n\to \{x_1,..., x_r\}$
as $n\rightarrow\infty$, there exists $q\in \mathbb{N}$ such that
$Y_n\in \langle V_1\cap g(n, x_1),..., V_r\cap g(n, x_r)\rangle$
whenever $n\geq q$. Note that $\{V_i: i\leq r\}$ are disjoint
family, $m_n\geq r$ as $n\geq q$.

Suppose $$\{y_1,..., y_s\}\in \bigcap_{n\in\mathbb{N}}G(n,
Y_n)=\bigcap_{n\in\mathbb{N}}\langle g(n, y_1(n)),..., g(n,
y_{m_{n}}(n))\rangle.$$ Fix an arbitrary $j\leq s$. we prove that
$y_{j}=x_{i}$ for some $i\leq r$. Indeed, for each $n\in\mathbb{N}$,
we have $y_j\in g(n, y_{i_j(n)}(n))$ for some $i_{j}(n)\leq m_{n}$.
Then there exist an increasing sequence $\{n_k\}\subset\mathbb{N}$
and $i\leq r$ such that $n_1\geq q, y_j\in g(n_k,
y_{i_j(n_{k})}(n_k))$ and $y_{i_{j}(n_{k})}(n_k)\in V_i\cap g(n_{k},
x_i)$. We claim that $y_{i_j(n_k)}(n_k)\to x_i$ as $n_k\to\infty$.
Indeed, let $U$ be an open neighborhood of $x_i$. Hence $\langle V_1\cap g(1,
x_1),..., U\cap V_i\cap g(1, x_i),..., V_r\cap g(1, x_r)\rangle$ is
an open neighborhood of $\{x_1,..., x_r\}$, then there exists
$q_1\in \mathbb{N}$ such that $$Y_n\in \langle V_1\cap g(1, x_1),...,
U\cap V_i\cap g(1, x_i),..., V_r\cap g(1, x_r)\rangle$$ as $n\geq
q_1$. It implies that $$y_{i_j(n_k)}(n_k)\in U\cap V_i\cap g(1,
x_i)\subset U$$ for $n_{k}\geq q_1$. Hence $y_{i_j(n_k)}(n_k)\to x_i$
as $n_k\to\infty$.

Therefore, we have $y_j\in\bigcap_{k\in \omega}g(n_k,
y_{i_j(n_k)}(n_k))\subset \{x_i\}$, thus $y_j=x_i$, so that $r\geq
s, \{y_1,..., y_s\}\subset\{x_1,..., x_r\}$.

Next, we prove $r=s$. Suppose not, $s<r$, hence $r\geq 2$. Since $m_n\geq r$  and
$g(n, y_i(n))\cap \{y_1,..., y_s\}\neq \emptyset$ for all $n\geq q,
i\leq m_n$, then there exist $y_m\in \{y_1, ..., y_s\}$, two distinct $p_1, p_2\leq
r$ and an increasing sequence $\{n_k\}\subset\mathbb{N}$ such that
$$n_1\geq q, y_m\in g(n_k, y_{i_{p_1}(n_k)}(n_k))\cap g(n_k,
y_{i_{p_2}(n_k)}(n_k))$$ and $y_{i_{p_j(n_k)}}(n_k)\in V_{p_j}\cap
g(n_{k}, x_{p_j}), x_{p_j}\in V_{p_j}$ for any $k\in\mathbb{N}$ and $j=1, 2$. By the claim above,
$y_{i_{p_j(n_k)}}(n_k)\to x_{p_j}$ as $k\rightarrow\infty$ for $j=1, 2$. Then $$y_m\in \bigcap_{k\in
\mathbb{N}} g(n_k, y_{i_{p_j}(n_k)}(n_k))\subset \{x_{p_j}\},
j=1,2,$$ hence $y_m=x_{p_1}, y_m=x_{p_2}$, this is a contradiction.

Therefore, $\{y_1,..., y_s\}=\{x_1,..., x_r\}$.
\end{proof}

\begin{theorem}
A space $X$ is a $\gamma$-space\footnote{A space $(X, \tau)$ is a $\gamma$-space if  there exists a function
$g: \omega\times X\to \tau_X$ such that (i) $\{g(n, x):
n\in\omega\}$ is a base at $x$; (ii) for each $n\in\omega$ and $x\in
X$, there exists $m\in\omega$ such that $y\in g(m, x)$ implies $g(n,
y)\subset g(n, x)$.} if and only if $\mathcal{F}(X)$ is a
$\gamma$-space.

\end{theorem}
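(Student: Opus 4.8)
The plan is to follow the pattern of Theorem~\ref{bco} (and of the $c$-semi-stratifiable theorem above): since the class of $\gamma$-spaces is hereditary and $X$ embeds in $\mathcal{F}(X)$ as a closed subspace, only necessity needs proof, so I assume $X$ is a $\gamma$-space and fix a $g$-function $g\colon\omega\times X\to\tau_X$ witnessing this. As a preliminary normalization I replace $g$ by $g'(n,x)=\bigcap_{i\le n}g(i,x)$: the family $\{g'(n,x):n\in\omega\}$ is still a base at $x$; condition (ii) is preserved (given $n$, take a witness $m_i$ for the original $g$ at each level $i\le n$ and set $m=\max_{i\le n}m_i$); and $g'$ is decreasing, $g'(n+1,x)\subseteq g'(n,x)$. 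Henceforth write $g$ for this $g'$; the nesting $g(n+1,x)\subseteq g(n,x)$ is used below.

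Next I define $G\colon\omega\times\mathcal{F}(X)\to\tau_{\mathcal{F}(X)}$ by
$$G(n,\{x_1,\dots,x_r\})=\langle g(n,x_1),\dots,g(n,x_r)\rangle,$$
where $x_1,\dots,x_r$ are the distinct points of the finite set. Each $G(n,A)$ is a basic Vietoris-open neighborhood of $A$, and the proof that $\{G(n,A):n\in\omega\}$ is a base at $A$ parallels Theorem~\ref{bco}: given open $\widehat{U}\ni A$, shrink it to a basic neighborhood $\langle V_1,\dots,V_r\rangle\subseteq\widehat{U}$ with the $V_i$ pairwise disjoint and $x_i\in V_i$; pick $m_i$ with $g(n,x_i)\subseteq V_i$ for all $n\ge m_i$, put $m=\max_i m_i$, and apply Lemma~\ref{l1} to conclude $G(n,A)\subseteq\langle V_1,\dots,V_r\rangle\subseteq\widehat{U}$ whenever $n\ge m$.

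The heart of the matter is condition (ii). Fix $n\in\omega$ and $A=\{x_1,\dots,x_r\}$. For each $i\le r$ the $\gamma$-property of $g$ gives $m_i\in\omega$ with $y\in g(m_i,x_i)\Rightarrow g(n,y)\subseteq g(n,x_i)$; set $N=\max\{m_i:i\le r\}$. I claim that $B\in G(N,A)$ implies $G(n,B)\subseteq G(n,A)$. Writing $B=\{y_1,\dots,y_s\}$, membership in $G(N,A)$ means $B\subseteq\bigcup_{i\le r}g(N,x_i)$ and $B\cap g(N,x_i)\neq\emptyset$ for every $i\le r$. By Lemma~\ref{l1}, the inclusion $\langle g(n,y_1),\dots,g(n,y_s)\rangle\subseteq\langle g(n,x_1),\dots,g(n,x_r)\rangle$ follows from two facts: (a) each $y_j$ lies in some $g(N,x_i)\subseteq g(m_i,x_i)$ (using $N\ge m_i$ and the nesting of $g$), so $g(n,y_j)\subseteq g(n,x_i)$, whence $\bigcup_j g(n,y_j)\subseteq\bigcup_i g(n,x_i)$; and (b) for each $i\le r$, picking $y_j\in B\cap g(N,x_i)\subseteq g(m_i,x_i)$ gives $g(n,y_j)\subseteq g(n,x_i)$. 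Both conditions of Lemma~\ref{l1} hold, so $G(n,B)\subseteq G(n,A)$, and $G$ witnesses that $\mathcal{F}(X)$ is a $\gamma$-space.

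I expect the main obstacles to be two-fold: the reduction to a nested $g$-function (without it a single index $N$ could not control all of $x_1,\dots,x_r$ simultaneously in the definition of $G$), and the index bookkeeping in the last paragraph, where the single $\gamma$-witness $m_i$ attached to the coordinate $x_i$ must supply both the \emph{covering} half and the \emph{refinement} half of the Vietoris inclusion in Lemma~\ref{l1}. Unlike the Nagata case there is no symmetry in $g$ to exploit, so Lemma~\ref{l1} must be applied directly; otherwise the argument is routine once the analogues for BCO and $c$-semi-stratifiability are in hand.
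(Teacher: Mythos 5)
Your proposal is correct and follows essentially the same route as the paper: the same definition $G(n,\{x_1,\dots,x_r\})=\langle g(n,x_1),\dots,g(n,x_r)\rangle$ after normalizing $g$ to be decreasing, the same local-base argument via disjoint neighborhoods, and the same use of Lemma~\ref{l1} with $N=\max_i m_i$ to verify condition (ii). Your write-up is in fact a bit more explicit than the paper's (which leaves the final Lemma~\ref{l1} verification to the reader), but there is no substantive difference.
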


\begin{proof}
Assume that $X$ is a $\gamma$-space. Hence there exists a
$g$-function $g: \omega\times X\to \tau_X$ satisfying the definition
above. We also may assume that $g(n+1, x)\subset g(n, x)$ for each
$x\in X$ and $n\in \omega$. Define $G: \omega\times
\mathcal{F}(X)\to \tau_{\mathcal{F}(X)}$ by $$G(n, \{x_1...,
x_r\})=\langle g(n, x_1),..., g(n, x_r)\rangle$$ for each
$\{x_1,..., x_r\}\in \mathcal{F}(X)$. We claim that the function $G$
satisfies the definition above.

\smallskip
(a) For each $\{x_1,..., x_r\}\in \mathcal{F}(X)$, the family
$\{G(n, \{x_1..., x_r\}): n\in\mathbb{N}\}$ is a local base at
$\{x_1,..., x_r\}$ in $\mathcal{F}(X)$.

\smallskip
Indeed, take an any open neighborhood $\widehat{U}$ of $\{x_1,...,
x_r\}$ in $\mathcal{F}(X)$. Then there is an open neighborhood
$\langle U_1,..., U_r\rangle$ of $\{x_1,..., x_r\}$ such that each
$U_i$ is an open neighborhood of $x_{i}$ in $X$, $U_i\cap
U_j=\emptyset$ ($i\neq j$) and $\langle U_1,..., U_r\rangle\subset
\widehat{U}$. Obviously, for each $i\leq r$, there exists $n_i$ such
that $g(k, x_i)\subset U_i$ whenever $k\geq n_i$. Let $m=\max\{n_i:
i\leq r\}$. Then $$\{x_1,..., x_r\}\in G(m, \{x_1,..., x_r\})=\langle
g(m, x_1),..., g(m, x_r)\rangle\subset \widehat{U}.$$

\smallskip
(b) For $\{x_1,..., x_r\}\in \mathcal{F}(X)$ and $n\in\mathbb{N}$,
there exists $m$ such that if $\{y_1,..., y_k\}\in g(m, \{x_1,...,
x_r\})$, then $g(m, \{y_1,..., y_k\})\subset g(n, \{x_1,...,
x_r\})$.

\smallskip
For $\{x_1,..., x_r\}\in \mathcal{F}(X)$ and $n\in\mathbb{N}$, since
$X$ is a $\gamma$-space, there exists $m\in\mathbb{N}$ such that,
for any $i\leq r$, if $y\in g(m, x_{i})$ then $g(m, y)\subset g(n,
x_{i})$. Take an arbitrary $\{y_1,..., y_k\}\in G(m, \{x_1,...,
x_r\})$. Then it follows from Lemma~\ref{l1} that
$$G(m, \{y_1, ..., y_k\})\subset G(n, \{x_1,..., x_r\}).$$

Therefore, $\mathcal{F}(X)$ is a $\gamma$-space.
\end{proof}

A function $d: X\times X\rightarrow \mathbb{R}^{+}$ is a {\it symmetric} on the set $X$ if, for any $x, y\in X$, we have (1) $d(x, y)=0\Leftrightarrow x=y$ and (2) $d(x, y)=d(y, x)$. For each $x\in X$ and $\varepsilon>0$, put $B(x, \varepsilon)=\{y\in X: d(x, y)<\varepsilon\}$. A space $X$ is said to be {\it symmetrizable} if there exists a symmetric $d$ on $X$ such that $U\subset X$ is open iff for each $x\in U$ there is $\varepsilon>0$ such that $B(x, \varepsilon)\subset U$. A space $X$ is called to be {\it semi-metrizable} if there exists a symmetric $d$ on $X$ such that for every $x\in X$, the family $\{B(x, \varepsilon): \varepsilon>0\}$ forms a neighborhood base at $x$. Clearly, each semi-metrizable space is symmetrizable.

\begin{theorem}
Let $X$ be a topological space with a countable pseudocharacter.
Then the following statements are equivalent.

\begin{enumerate}
\item $X$ is a semi-metrizable spaces;
\item $\mathcal{F}(X)$ is symmetrzable;
\item $\mathcal{F}(X)$ is semi-metrizable.
\end{enumerate}
\end{theorem}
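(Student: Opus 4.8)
The plan is to prove the three implications $(1)\Rightarrow(3)\Rightarrow(2)\Rightarrow(1)$, using the hypothesis of countable pseudocharacter at the last step. First I would recall that for semi-metrizable spaces there is a well-known internal characterization: $X$ is semi-metrizable if and only if there is a $g$-function $g\colon\omega\times X\to\tau_X$ such that $\{g(n,x):n\in\omega\}$ is a neighborhood base at $x$ for each $x$, and $x_n\to x$ whenever $x\in g(n,x_n)$ for all $n$ (equivalently, $x\in\overline{A}$ iff there is a sequence in $A$ converging to $x$, realized via the $g$-function). For $(1)\Rightarrow(3)$ I would take such a $g$ on $X$, and — mimicking the construction used for $c$-semi-stratifiable and $\gamma$-spaces earlier in the paper — define $G(n,\{x_1,\dots,x_r\})=\langle g(n,x_1),\dots,g(n,x_r)\rangle$, after first arranging (as in the earlier proofs) that the $g(n,x_i)$ can be taken pairwise disjoint whenever the $x_i$ are distinct and that $g(n+1,x)\subset g(n,x)$. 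I would then verify the two conditions: that $\{G(n,\{x_1,\dots,x_r\}):n\in\omega\}$ is a neighborhood base at $\{x_1,\dots,x_r\}$ in $\mathcal F(X)$ (this is the same routine argument as in Theorem~\ref{bco}, taking a max over the finitely many coordinates), and that if a sequence $\{Y_n\}$ in $\mathcal F(X)$ satisfies $\{x_1,\dots,x_r\}\in G(n,Y_n)$ for all $n$ then $Y_n\to\{x_1,\dots,x_r\}$. The latter convergence claim is essentially the content already worked out in the $c$-semi-stratifiable theorem: one passes to coordinates, uses disjointness of the $g(n,x_i)$ to force each $Y_n$ to meet each of the $r$ "slots", and applies the coordinate-wise convergence from the $g$-function on $X$ to see that the points chosen in each slot converge to the corresponding $x_i$, while the pseudocharacter/semi-metric structure prevents extra points from surviving. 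From this $g$-function characterization one recovers a symmetric on $\mathcal F(X)$ inducing its topology and giving neighborhood bases, i.e. semi-metrizability.

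The implication $(3)\Rightarrow(2)$ is immediate, since every semi-metrizable space is symmetrizable (this is stated in the paper just before the theorem). The remaining implication $(2)\Rightarrow(1)$ is where the countable-pseudocharacter hypothesis enters. Since $X$ embeds as a closed subspace of $\mathcal F(X)$ (as $\mathcal F_1(X)$), symmetrizability of $\mathcal F(X)$ passes to $X$, so $X$ is symmetrizable. A symmetrizable space with countable pseudocharacter at each point is semi-metrizable precisely when it is first-countable in the appropriate sense; more to the point, I would invoke the classical fact (Nedev, or Arhangel'skii) that a symmetrizable space is semi-metrizable if and only if it is Fréchet--Urysohn, together with the observation that a symmetrizable space of countable pseudocharacter in which, say, every point is a $G_\delta$ is already "sequential enough". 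Concretely: $X$ symmetrizable $\Rightarrow$ $X$ sequential; a sequential symmetrizable space of pointwise countable pseudocharacter is semi-metrizable. So the step reduces to quoting the correct known characterization of semi-metrizability among symmetrizable spaces and checking that countable pseudocharacter supplies the missing hypothesis.

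The main obstacle I expect is the convergence direction in $(1)\Rightarrow(3)$: making precise how a sequence $Y_n=\{y_1(n),\dots,y_{m_n}(n)\}$ with $\{x_1,\dots,x_r\}\in\langle g(n,y_1(n)),\dots,g(n,y_{m_n}(n))\rangle$ is forced to converge to $\{x_1,\dots,x_r\}$ in the Vietoris topology. One has to argue simultaneously that (a) eventually every $x_i$ is "hit" by exactly one slot of $Y_n$ whose chosen point converges to $x_i$ — here the disjointness of the $g(n,x_i)$ and the semi-metric convergence on $X$ do the work, exactly as in the displayed argument of the earlier $c$-semi-stratifiable theorem — and (b) no slot of $Y_n$ can "escape" from $\bigcup_i g(n,x_i)$, i.e. $Y_n$ eventually lies inside small neighborhoods of $\{x_1,\dots,x_r\}$; this second part uses that $\{x_1,\dots,x_r\}\in G(n,Y_n)$ means each $g(n,y_k(n))$ contains some $x_i$, so $y_k(n)\in g(n,x_i)^{-1}$-type sets, and one again reads off convergence $y_k(n)\to x_i$ from the $g$-function. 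Since the bookkeeping is entirely parallel to the $c$-semi-stratifiable and $\gamma$-space proofs already in the paper, I would write this step by explicitly referring back to those arguments rather than reproducing them, and devote the new writing to the extraction of the symmetric from the $g$-function and to the clean statement of the symmetrizable$\,\to\,$semi-metrizable criterion used in $(2)\Rightarrow(1)$.
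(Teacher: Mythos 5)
Your step $(2)\Rightarrow(1)$ has a genuine gap. After restricting the symmetric to the closed subspace $X=\mathcal{F}_1(X)$, you propose to conclude semi-metrizability of $X$ from ``symmetrizable $+$ sequential $+$ countable pseudocharacter''. That implication is false: the Arens space $S_2$ is countable (so every singleton is a $G_\delta$), symmetrizable and sequential, but it is not Fr\'echet--Urysohn, hence not semi-metrizable (semi-metrizable spaces are first countable). The correct criterion is the one you half-quote: a symmetrizable space is semi-metrizable if and only if it is Fr\'echet--Urysohn (\cite[Theorem 9.6]{G1984}), and countable pseudocharacter does not by itself supply Fr\'echet--Urysohn-ness; sequentiality plus points $G_\delta$ gives Fr\'echet--Urysohn only after one excludes closed copies of $S_2$ in $X$. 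This is exactly where the hypothesis must be used at full strength, and your argument never uses symmetrizability of $\mathcal{F}(X)$ beyond its trace on $X$, so it cannot distinguish the theorem's hypothesis from the insufficient hypothesis ``$X$ is symmetrizable''. The paper's route is: $\mathcal{F}(X)$ symmetrizable implies $\mathcal{F}(X)$ has no closed copy of $S_\omega$ (such a copy would be symmetrizable and Fr\'echet--Urysohn, hence semi-metrizable and first countable, which $S_\omega$ is not); then the contrapositive of Proposition~\ref{p2} rules out closed copies of $S_2$ in $X$; then ``sequential $+$ points $G_\delta$ $+$ no closed copy of $S_2$ $\Rightarrow$ Fr\'echet--Urysohn'' applies, and Gruenhage's theorem finishes. (Strictly, the copy built in Proposition~\ref{p2} lives in $\mathcal{K}(X)$ rather than $\mathcal{F}(X)$, so even this reduction needs a word of justification, but it is the intended mechanism; your version omits the mechanism altogether.)

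Your $(1)\Rightarrow(3)$ is a genuinely different route from the paper's. The paper simply quotes that semi-metrizable $=$ first countable $+$ semi-stratifiable (\cite[Theorem 9.8]{G1984}) and cites the preservation of both properties by $\mathcal{F}(X)$ (\cite{PS2017} and \cite{M1951}), whereas you construct a Heath-type $g$-function $G(n,\{x_1,\dots,x_r\})=\langle g(n,x_1),\dots,g(n,x_r)\rangle$ directly. This can be made to work: the neighborhood-base condition is the routine maximum-over-coordinates argument, and the convergence condition (if $\{x_1,\dots,x_r\}\in G(n,Y_n)$ for all $n$ then $Y_n\to\{x_1,\dots,x_r\}$) follows for the ``lower'' part of the Vietoris topology coordinatewise, and for the ``upper'' part by a pigeonhole argument using monotonicity $g(n+1,x)\subset g(n,x)$ to show no point of $Y_n$ can stay outside a fixed open $U\supset\{x_1,\dots,x_r\}$; note you do not need (and cannot globally arrange) pairwise disjointness of the $g(n,x_i)$, only that each $g(n,x_i)$ is eventually inside a prechosen disjoint $V_i$. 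As written, though, this part is a sketch, and the citation-based argument of the paper is shorter; $(3)\Rightarrow(2)$ is fine in both treatments.
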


\begin{proof}

The implication of (3) $\to$ (2) is trivial. It suffices to prove
(1) $\to$ (3) and (2) $\to$ (1).

(1) $\to$ (3). Assume $X$ is a semi-metrizable space. Note that a
space is semi-metrizable if and only if  it is a first-countable,
semi-stratifiable space by \cite[Theorem 9.8]{G1984}. Then it follow
from \cite[Theorem 39]{PS2017} and \cite[Proposition 4.5]{M1951}
that $\mathcal{F}(X)$ is a fist-countable semi-stratifiable space,
that is, a semi-metrizable space.

(2) $\to$ (1). Since $X$ is a closed subspace of $\mathcal{F}(X)$,
the subspace $X$ is symmetrizable.  Obviously, $\mathcal{F}(X)$
contains no closed copy of $S_\omega$, then it follows from
Proposition ~\ref{p2} that $X$ contains no closed copy of $S_2$.
Then $X$ is Fr\'echet-Urysohn since  a sequential space in which
each singleton is a $G_\delta$-set is Fr\'echet-Urysohn if it
contains no closed copy of $S_2$, hence $X$ is semi-metric by
\cite[Theorem 9.6]{G1984}.
\end{proof}

Finally, we prove that for a $g$-first-countable space $X$ if $\mathcal{F}(X)$ is sequential then $\mathcal{F}(X)$ is
$g$-first-countable.

\begin{theorem}
Let $X$ be a $g$-first-countable space. Then $\mathcal{F}(X)$ is
$g$-first-countable if and only if $\mathcal{F}(X)$ is sequential.
\end{theorem}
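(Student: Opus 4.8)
The plan is to rely on the equivalence recalled in Section~2 that a space is $g$-first-countable if and only if it is simultaneously sequential and $snf$-countable. With that, the necessity is immediate: a $g$-first-countable space is sequential, so if $\mathcal{F}(X)$ is $g$-first-countable it is sequential. For the sufficiency, assume $\mathcal{F}(X)$ is sequential; since it is already sequential, it suffices to prove that $\mathcal{F}(X)$ is $snf$-countable, i.e. to exhibit a countable $sn$-network at each of its points. Because $X$ is $g$-first-countable it is $snf$-countable, so $X$ has an $sn$-network $\bigcup_{x\in X}\mathscr{Q}_x$ with each $\mathscr{Q}_x$ countable; replacing $\mathscr{Q}_x=\{Q_n(x):n\in\mathbb{N}\}$ by its family of initial finite intersections $P_n(x)=\bigcap_{i\le n}Q_i(x)$ (a finite intersection of sequential neighborhoods of $x$ is again a sequential neighborhood of $x$), I may assume that for every $x$ there is a decreasing sequence $\{P_n(x):n\in\mathbb{N}\}$ of sequential neighborhoods of $x$ such that every open neighborhood of $x$ contains some $P_n(x)$.

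For a point $F=\{x_1,\dots,x_r\}\in\mathcal{F}(X)$ with the $x_i$ pairwise distinct, set
\[
\mathscr{P}_F=\{\langle P_{n_1}(x_1),\dots,P_{n_r}(x_r)\rangle:(n_1,\dots,n_r)\in\mathbb{N}^r\},
\]
which is countable, and let $\mathscr{P}=\bigcup_{F\in\mathcal{F}(X)}\mathscr{P}_F$. Conditions (i)--(iii) in the definition of an $sn$-network are routine for $\mathscr{P}$: (i) $F$ belongs to each member of $\mathscr{P}_F$; (ii) given two members of $\mathscr{P}_F$, using that each $\{P_n(x_i)\}_n$ is decreasing together with Lemma~\ref{l1} one finds a third member of $\mathscr{P}_F$ contained in their intersection; (iii) given an open $\widehat U\ni F$, choose a basic open set $\langle U_1,\dots,U_r\rangle\subseteq\widehat U$ with $x_i\in U_i$ and the $U_i$ pairwise disjoint, then pick $n_i$ with $P_{n_i}(x_i)\subseteq U_i$, so that $F\in\langle P_{n_1}(x_1),\dots,P_{n_r}(x_r)\rangle\subseteq\widehat U$.

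The heart of the argument is to show that each $\langle P_{n_1}(x_1),\dots,P_{n_r}(x_r)\rangle$ is a sequential neighborhood of $F$ in $\mathcal{F}(X)$. Let $F_k\to F$ in $\mathcal{F}(X)$, and fix disjoint open neighborhoods $U_1,\dots,U_r$ of $x_1,\dots,x_r$. From the definition of Vietoris convergence one gets $k_0$ such that for $k\ge k_0$ one has $F_k\subseteq\bigcup_i U_i$ and $A_i^{(k)}:=F_k\cap U_i\neq\emptyset$ for all $i$; and, applying Vietoris convergence to the open set $V\cup\bigcup_{j\neq i}U_j\supseteq F$ (for $V\ni x_i$ open) together with the disjointness of the $U_j$, one gets that for each fixed $i$ and each open $V\ni x_i$ one has $A_i^{(k)}\subseteq V$ for all large $k$. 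Hence any selection $c^{(k)}\in A_i^{(k)}$ ($k\ge k_0$) is a sequence in $X$ converging to $x_i$; since $P_{n_i}(x_i)$ is a sequential neighborhood of $x_i$, it follows that $A_i^{(k)}\subseteq P_{n_i}(x_i)$ for all sufficiently large $k$ (otherwise a selection would produce a sequence converging to $x_i$ while avoiding $P_{n_i}(x_i)$). Choosing $k_1\ge k_0$ past the thresholds for all $i\le r$, we obtain $F_k=\bigcup_i A_i^{(k)}\subseteq\bigcup_i P_{n_i}(x_i)$ and $F_k\cap P_{n_i}(x_i)\supseteq A_i^{(k)}\neq\emptyset$ for $k\ge k_1$, i.e. $F_k\in\langle P_{n_1}(x_1),\dots,P_{n_r}(x_r)\rangle$ eventually. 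Thus every member of $\mathscr{P}_F$ is a sequential neighborhood of $F$, so $\mathscr{P}$ is an $sn$-network of $\mathcal{F}(X)$ with all fibers countable; $\mathcal{F}(X)$ is $snf$-countable, and being sequential it is $g$-first-countable.

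I expect the main obstacle to be exactly this last verification: because the $P_{n_i}(x_i)$ are sequential neighborhoods and not open, one cannot argue by open neighborhoods but must instead understand how a sequence in $\mathcal{F}(X)$ converging to $F$ eventually breaks into $r$ clusters, each cluster (along any choice of representatives) converging to the corresponding point of $F$, and then feed those selections into the $snf$-countability of $X$. Everything else (the closure under finite intersections, the routine checks of (i)--(iii), and the two reductions through the equivalence ``sequential $+$ $snf$-countable $\Leftrightarrow$ $g$-first-countable'') is bookkeeping.
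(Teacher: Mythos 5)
Your proof is correct, and it shares the paper's overall strategy: reduce via the equivalence ``sequential $+$ $snf$-countable $\Leftrightarrow$ $g$-first-countable'' and take as candidate $sn$-network at $F=\{x_1,\dots,x_r\}$ the Vietoris boxes built from decreasing countable $sn$-networks at the points of $F$ (the paper uses the diagonal family $\langle P_n(1),\dots,P_n(m)\rangle$, you use multi-indices $\langle P_{n_1}(x_1),\dots,P_{n_r}(x_r)\rangle$; both are countable and both work). Where you genuinely diverge is in the key verification that these boxes are sequential neighborhoods of $F$. The paper argues by contradiction: it assumes a sequence $F_k\to F$ not eventually in some box, reduces ``without loss of generality'' to two cases (either $F_k\cap P_n(i)=\emptyset$ for all $k,n$ and some $i$, or $F_k\setminus\bigcup_i P_n(i)\neq\emptyset$ for all $k,n$), and in each case invokes Michael's theorem that $\{x_1,\dots,x_m\}\cup\bigcup_k F_k$ is compact together with the sequentiality of $X$ to extract a convergent sequence and derive a contradiction. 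You argue directly: fixing pairwise disjoint open $U_i\ni x_i$, you show from Vietoris convergence (applied to $V\cup\bigcup_{j\neq i}U_j$ and using the disjointness) that the clusters $A_i^{(k)}=F_k\cap U_i$ are nonempty and eventually contained in every open $V\ni x_i$, so any selection from them converges to $x_i$; feeding such selections into the sequential-neighborhood property of $P_{n_i}(x_i)$ gives $A_i^{(k)}\subseteq P_{n_i}(x_i)$ eventually, hence $F_k$ lies in the box eventually. Your route is more elementary and self-contained: it needs neither the compactness of the union (Michael's Theorem 2.5.2) nor the sequentiality of $X$ itself (only its $snf$-countability), and it sidesteps the paper's somewhat delicate case reduction, at no cost in generality. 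The one small point worth tightening is the parenthetical selection argument: if $A_i^{(k)}\not\subseteq P_{n_i}(x_i)$ for infinitely many $k$, choose witnesses outside $P_{n_i}(x_i)$ at those indices and arbitrary points of $A_i^{(k)}$ elsewhere; the resulting full sequence converges to $x_i$ but is not eventually in $P_{n_i}(x_i)$, which is the precise contradiction needed.
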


\begin{proof}
Clearly, it suffices to consider the sufficiency. Assume that $\mathcal{F}(X)$ is sequential, then it suffices to prove that $\mathcal{F}(X)$ is
$snf$-countable. Take an arbitrary point $\{x_{1}, \ldots, x_{m}\}\in\mathcal{F}(X)$. Since $X$ is a $g$-first-countable space, $X$ is $snf$-countable, hence for each $i\leq m$ there exists a sequence of decreasing subsets $\mathcal{P}(i)=\{P_{n}(i): n\in\mathbb{N}\}$ of $X$ such that $\mathcal{P}(i)$ is an $sn$-network at $x_{i}$ in $X$. Let
$$\mathcal{P}=\{\langle P_{n}(1),..., P_{n}(m)\rangle: P_{n}(i)\in\mathcal{P}(i), n\in\mathbb{N}, i\leq m\}.$$
Obviously, it is easy to see that $\mathcal{P}$ is a decreasing network at $\{x_{1}, \ldots, x_{m}\}$ in $\mathcal{F}(X)$. We claim that $\mathcal{P}$ is an $sn$-network at $\{x_{1}, \ldots, x_{m}\}$. Suppose not, we assume that there exists a sequence $\{F_{k}\}_{k\in\mathbb{N}}$ converges to $\{x_{1}, \ldots, x_{m}\}$ in $\mathcal{F}(X)$ such that for each $P\in\mathcal{P}$ the sequence $\{F_{k}\}_{k\in\mathbb{N}}$ is not eventually in $P$. Without loss of generality, we may assume that for each $n, i\in\mathbb{N}$ we have $P_{n}(i)\cap P_{m}(i)=\emptyset$ for any $m\neq n$ and $x_{i}\in P_{n}(i)$. Moreover, without loss of generality, we may suppose that either there exists $1\leq i\leq m$ such that $F_{k}\cap P_{n}(i)=\emptyset$ for each $k, n\in\mathbb{N}$, or $F_{k}\setminus\bigcup_{i=1}^{m}P_{n}(i)\neq\emptyset$ for each $k, n\in\mathbb{N}$. In order to obtain a contradiction, we divide the proof into the following two cases.

\smallskip
{\bf Case 1} There exists $1\leq i\leq m$ such that $F_{k}\cap P_{n}(i)=\emptyset$ for each $k, n\in\mathbb{N}$.

\smallskip
Then the point $x_{i}$ is not an isolated point in $X$ since $\{F_{k}\}_{k\in\mathbb{N}}$ converges to $\{x_{1}, \ldots, x_{m}\}$. Put $K=\{x_{1}, \ldots, x_{m}\}\cup\bigcup_{k\in\mathbb{N}}F_{k}$. Then $K$ is compact in $X$ by \cite[Theorem2.5.2]{M1951}, hence from the sequentiality of $X$ there exists a sequence $\{a_{k}\}_{k\in\mathbb{N}}$ of $K\setminus\{x_{1}, \ldots, x_{m}\}$ such that $a_{k}\rightarrow x_{i}$ as $k\rightarrow\infty$. Since each $P_{n}(i)$ is a sequential neighborhood of $x_{i}$, there exists $l\in\mathbb{N}$ such that $a_{k}\in P_{n}(i)$ for any $k>l$; however, this is a contradiction with $F_{k}\cap P_{n}(i)=\emptyset$ for each $k\in\mathbb{N}$.

\smallskip
{\bf Case 2} For each $k, n\in\mathbb{N}$, $F_{k}\setminus\bigcup_{i=1}^{m}P_{n}(i)\neq\emptyset$.

\smallskip

 Fix an arbitrary $n\in\mathbb{N}$. For each $k\in\mathbb{N}$, take an arbitrary point $b_{k}\in F_{k}\setminus\bigcup_{i=1}^{m}P_{n}(i)$. Put $B=\{b_{k}: k\in\mathbb{N}\}$. If $B$ is finite, then there exist $b\in B$ and an increasing sequence $\{n_{j}\}_{j\in\mathbb{N}}$ such that $b_{n_{j}}=b$ for each $j\in\mathbb{N}$. Hence, for each $l\leq m$, we can take an open neighborhood $V(l)$ of $x_{l}$ in $X$ such that $b\not\in V(l)$, then $\langle
V(1),..., V(m)\rangle$ is an open neighborhood of $\{x_{1}, \ldots, x_{m}\}$in $\mathcal{F}(X)$; however, $F_{n_{j}}\not\in \langle
V(1),..., V(m)\rangle$ for each $j\in\mathbb{N}$, which is a contradiction. Now it suffices to consider that $B$ is an infinite set, then $B$ has a cluster point $c$ in $X$ since $K=\{x_{1}, \ldots, x_{m}\}\cup\bigcup_{k\in\mathbb{N}}F_{k}$ is compact in $X$ by \cite[Theorem 2.5.2]{M1951}. From the sequentiality of $X$, there exists a sequence $\{c_{k}\}_{k\in\mathbb{N}}$ of $B$ such that $\{c_{k}\}_{k\in\mathbb{N}}$ converges to $c$ in $X$. Without loss of generality, we may assume that $c_{k}\in F_{k}$ for each $k\in\mathbb{N}$. Clearly, $c\not\in\{x_{1}, \ldots, x_{m}\}$,
hence we can take an open neighborhoods $V(c)$ of $c$ and $V(k)$ of $x_{k}$ in $X$ for each $k\leq m$ such that $V(c)\cap V(k)=\emptyset$, then $\langle
V(1),..., V(m)\rangle$ is an open neighborhood of $\{x_{1}, \ldots, x_{m}\}$ in $\mathcal{F}(X)$, and there exists $L\in\mathbb{N}$ such that $c_{k}\in V(c)$ for any $k>L$ since $V(c)\cap V(j)=\emptyset$ for any $j\leq m$; therefore, $F_{k}\not\in\langle
V(1),..., V(m)\rangle$ for any $k>L$, which is a contradiction.
\end{proof}

However, the following two questions are interesting and remain open.

\begin{question}
Let $X$ be a $g$-first-countable space. If $X^n$ is sequential for
$n\in \mathbb{N}$, is $\mathcal{F}(X)$ g-first-countable?
\end{question}

\begin{question}
Is $\mathcal{F}(S_2)$ $g$-first-countable?
\end{question}

\end{document}